\numberwithin{equation}{section}
\definecolor{myred}{rgb}{0.75,0,0}
\definecolor{mygreen}{rgb}{0,0.5,0}
\definecolor{myblue}{rgb}{0,0,0.65}
    \def\CM{{\mathbb{C}}}
    \def\DM{{\mathbb{D}}}
  \def\hg{{\mathfrak h}}
    \def\RM{{\mathbb{R}}}
    \def\ZM{{\mathbb{Z}}}
    \def\BC{{\mathcal{B}}}
    \def\HC{{\mathcal{H}}}
\def\a{\alpha}
\def\b{\beta}
\def\g{\gamma}
\def\e{\varepsilon}
\def\l{\lambda}
\def\z{\zeta}
\newcommand{\nc}{\newcommand} \newcommand{\renc}{\renewcommand}
\newcommand{\rdots}{\mathinner{ \mkern1mu\raise1pt\hbox{.}
    \mkern2mu\raise4pt\hbox{.}
    \mkern2mu\raise7pt\vbox{\kern7pt\hbox{.}}\mkern1mu}}
\def\top{{\mathrm{top}}}
\def\bot{{\mathrm{bot}}}
\DeclareMathOperator{\Tr}{Tr}
\DeclareMathOperator{\Ker}{Ker}
\def\ov{\overline}
\def\un{\underline}
\def\to{\rightarrow}
\def\longto{\longrightarrow}
\def\onto{\twoheadrightarrow}
\nc{\triright}{\stackrel{[1]}{\to}}
\nc{\longtriright}{\stackrel{[1]}{\longto}}
\def\summand{
\mathbin{\tikz [baseline=-3pt]
{
\clip (-0.2,-0.15) rectangle (0.2,0.15);
\node[scale=1.2] at (0,0) {$\subset$}; \node[scale=0.5] at
  (0,0) {$\oplus$}; }}
}
\nc{\ot}{\otimes}
\DeclareMathOperator{\Br}{Br}
\nc{\HotRR}{{}_R\mathcal{K}_R}
\nc{\HotR}{\mathcal{K}_R}
\nc{\excise}[1]{}
\nc{\defect}{\text{df}}
\nc{\h}[1]{\underline{H}_{#1}}
\nc{\ptau}{\tau}
\nc{\Ga}{\mathbb{G}_a} 
\nc{\Gm}{\mathbb{G}_m} 
\nc{\Perv}{{\mathbf{P}}}
\nc{\IH}{{\mathrm{IH}}}
\nc{\ic}{\mathbf{IC}}
\nc{\gl}{{\mathfrak{gl}}}
\renc{\sl}{{\mathfrak{sl}}}
\renc{\sp}{{\mathfrak{sp}}}
\nc{\HBM}{H^{BM}}
\nc{\id}{\textrm{id}}
 \DeclareMathOperator{\Hom}{Hom}
\DeclareMathOperator{\supp}{supp} \DeclareMathOperator{\ch}{ch}
\DeclareMathOperator{\End}{End} 
\DeclareMathOperator{\rad}{rad}
\newtheorem{thm}{Theorem}[section]
\newtheorem{lem}[thm]{Lemma}
\newtheorem{prop}[thm]{Proposition}
\newtheorem{cor}[thm]{Corollary}
\theoremstyle{definition}
\theoremstyle{remark}
\newtheorem{remark}[thm]{Remark}
\newcommand{\into}{\hookrightarrow}
\DeclareMathOperator{\Gr}{Gr}
\nc{\simto}{\stackrel{\sim}{\to}}
\nc{\oBC}{\ov{\BC}} 
\DeclareMathOperator{\rk}{rk}
\DeclareMathOperator{\gdim}{\un{\dim}}
\DeclareMathOperator{\grk}{\un{\rk}}
\begin{document}

\title{The Hodge theory of Soergel bimodules}

\author{Ben Elias}
\address{Massachusetts Institute of Technology, Boston, USA}
\email{belias@math.mit.edu}
\urladdr{}

\author{Geordie Williamson} 
\address{Max-Planck-Institut f\"ur Mathematik, Bonn, Germany}
\email{geordie@mpim-bonn.mpg.de}
\urladdr{}

\begin{abstract} We prove Soergel's conjecture on the characters of indecomposable Soergel bimodules. We deduce that Kazhdan-Lusztig polynomials have
positive coefficients for arbitrary Coxeter systems. Using results of
Soergel one may deduce an algebraic proof of the Kazhdan-Lusztig conjecture. \end{abstract}

\maketitle

\tableofcontents

\section{Introduction} \label{sec:introduction}

In 1979 Kazhdan and Lusztig introduced the Kazhdan-Lusztig basis of the Hecke algebra of a Coxeter system \cite{KL1}. The definition of the Kazhdan-Lusztig basis is elementary, however it
appears to enjoy remarkable positivity properties. For example, it is conjectured in \cite{KL1} that Kazhdan-Lusztig polynomials (which express the Kazhdan-Lusztig basis in terms of the
standard basis of the Hecke algebra) have positive coefficients. The same paper also proposed the Kazhdan-Lusztig conjecture, a character formula for simple highest weight modules for a
complex semi-simple Lie algebra in terms of Kazhdan-Lusztig polynomials associated to its Weyl group.

In a sequel \cite{KL2}, Kazhdan and Lusztig established that their polynomials give the Poincar\'e polynomials of the local intersection cohomology of Schubert varieties (using Deligne's
theory of weights), thus establishing their positivity conjectures for
finite and affine Weyl groups. In 1981 Beilinson and Bernstein
\cite{BB} and Brylinski and Kashiwara \cite{BK} established a 
connection between highest weight representation theory and perverse
sheaves, using $D$-modules and the Riemann-Hilbert
correspondence, thus proving the
Kazhdan-Lusztig conjecture. Since their introduction
Kazhdan-Lusztig polynomials have become ubiquitous throughout highest
weight representation theory, giving character formulae for affine Lie
algebras, quantum groups at a root of unity, rational representations
of algebraic groups, etc.

In 1990 Soergel \cite{S1} gave an alternate proof of the
Kazhdan-Lusztig conjecture, using certain modules over the cohomology
ring of the flag variety.\footnote{\cite[\S1.1, Bermerkung
  5]{S1}. This seems not to be as well-known as it should be.} In a subsequent
paper \cite{S2} Soergel introduced equivariant analogues of these modules, which have come to be known as \emph{Soergel bimodules}.

Soergel's approach is remarkable in its simplicity. Using only the action of the Weyl group on a Cartan subalgebra, Soergel associates to each simple reflection a graded bimodule over the
regular functions on the Cartan subalgebra. He then proves that the split Grothendieck
group of the monoidal category generated by these 
bimodules (the category of Soergel bimodules) is isomorphic to the
Hecke algebra. Moreover, the Kazhdan-Lusztig conjectures (as well as
several positivity conjectures) are equivalent to 
the existence of certain bimodules whose classes in the Grothendieck
group coincide with the Kazhdan-Lusztig basis. Despite its elementary
appearance, this statement is difficult to 
verify. For finite Weyl groups, Soergel deduces the existence of such bimodules by
applying the decomposition theorem of Beilinson, Bernstein, Deligne
and Gabber \cite{BBD} to 
identify the indecomposable Soergel bimodules with the equivariant
intersection cohomology of Schubert varieties. This approach was
extended by H\"arterich to the setting of Weyl groups of
symmetrizable Kac-Moody groups \cite{Ha}.  Except for his appeal
to the decomposition theorem, Soergel's approach is entirely
algebraic. (The decomposition theorem relies on the base field having characteristic 0, which will be an important assumption below.)

In \cite{S2} and \cite{S3} Soergel pointed out that the algebraic
theory of Soergel bimodules can be developed for an arbitrary Coxeter
system. Starting with an 
appropriate representation of the Coxeter group (the substitute for
the Weyl group's action on a Cartan subalgebra) one defines the
monoidal category of 
Soergel bimodules by mimicking the Weyl group case. Surprisingly, one
again obtains a monoidal category whose split Grothendieck group is
canonically identified with the Hecke algebra. 
Soergel then conjectures the existence (over a field of characteristic
0) of indecomposable bimodules whose classes coincide with the
Kazhdan-Lusztig basis of the Hecke algebra. At this level of
generality there is no known recourse to geometry. One does not
have a flag variety or Schubert varieties associated to arbitrary
Coxeter groups, and so one has no geometric setting in which to apply
the decomposition theorem. Soergel's conjecture was established for
dihedral groups by Soergel \cite{S2} and for ``universal'' Coxeter systems
(where each product of simple reflections has infinite order) by
Fiebig \cite{Funiv} and Libedinsky. However, in both these cases there
already existed closed formulas for the Kazhdan-Lusztig polynomials.

In this paper we prove Soergel's conjecture for an arbitrary Coxeter
system. We thus obtain a proof of the
positivity of Kazhdan-Lusztig polynomials (as well as several other
positivity conjectures). We also obtain an algebraic proof of the
Kazhdan-Lusztig conjecture, completing the program initiated by
Soergel. In some sense we have come full circle: the original paper of Kazhdan and Lusztig was stated in the generality of an arbitrary Coxeter system, this paper returns Kazhdan-Lusztig theory to this
level of generality.

Our proof is inspired by two papers of de Cataldo and Migliorini
(\cite{dCM1} and \cite{dCM2}) which give Hodge-theoretic proofs of the
decomposition theorem.  In essence, de Cataldo and Migliorini show that the decomposition theorem for a proper map (from a smooth space) is implied by certain Hodge theoretic properties of the cohomology groups of the source, under a Lefschetz operator induced from the target.
We discuss their approach in more 
detail below. Thus they are able to transform a geometric question on the
target into an algebraic question on the source. They then 
use classical Hodge theory and some ingenious arguments to complete
the proof. For Weyl groups, 
Soergel bimodules are the equivariant intersection
cohomology of Schubert varieties, and as such have a
number of remarkable Hodge-theoretic 
properties which seem not to have been made explicit before.
In fact, these properties hold for any Coxeter group;
Soergel bimodules always behave as though they were intersection
cohomology spaces of projective varieties! In this 
paper, we give an algebraic proof of these Hodge-theoretic properties,
for any Coxeter group, and adapt the proof that these Hodge-theoretic
properties imply the ``decomposition 
theorem", at least insofar as Soergel's conjecture is concerned.

Here are some highlights of de Cataldo and Migliorini's proof from \cite{dCM1}:
\begin{enumerate}
\item ``Local intersection forms'' (which control the decomposition of
  the direct image of
  the constant sheaf) can be embedded into ``global intersection
  forms" on the cohomology of smooth varieties.
\item The Hodge-Riemann bilinear relations can be used to conclude that the restriction of a form to a subspace (i.e. the image of a local intersection form) stays definite.
\item One should first prove the hard Lefschetz theorem, and then
  deduce the Hodge-Riemann bilinear relations via a limiting argument
  from a family of known cases, using that the signature of a non-degenerate symmetric real form cannot change in a family.
\end{enumerate}

It is this outline that we adapt to our algebraic situation. However the translation of their results into the language of Soergel bimodules is by no means automatic. The
biggest obstacle is to find a replacement for the use of hyperplane sections and the weak Lefschetz theorem. We believe that our use of the Rouquier complex to overcome this
difficulty is an important observation and may have other applications.

There already exists a formidable collection of algebraic machinery,
developed by Soergel \cite{S2,S4}, Andersen-Jantzen-Soergel
\cite{AJS}, and Fiebig \cite{F1,F2}, which
provides algebraic proofs of many 
deep results in representation theory once Soergel's conjecture is
known.
These include the Kazhdan-Lusztig conjecture for affine Lie algebras
(in non-critical level), the Lusztig conjecture for quantum groups at
a root of unity, and the Lusztig 
conjecture on modular characters of reductive algebraic groups in
characteristic $p \gg 0$.

There are many formal similarities between the
theory we develop here, and the theory of intersection cohomology of
non-rational polytopes, which was developed to prove Stanley's
conjecture on the unimodularity of the generalized $h$-vector
\cite{BL, K, BBFK}. In both cases one obtains spaces which look like the 
intersection cohomology of a (in many cases non-existent) projective
algebraic variety. Dyer \cite{D1,D2} has a proposed a conjectural framework for
understanding both of these theories in parallel. It would be 
interesting to know whether the techniques of this paper shed light on
this more general theory.

\subsection{Results} \label{sec:results}

Fix a Coxeter system $(W,S)$. Let $\HC$ denote the Hecke algebra of
$(W,S)$, a $\ZM[v^{\pm 1}]$-algebra with standard basis $\{H_x\}_{x \in W}$ and Kazhdan-Lusztig
basis $\{\un{H}_x\}_{x \in W}$ as in \S\ref{sec:hecke}.
We fix a reflection faithful (in the sense of \cite[Definition 1.5]{S3}) representation $\hg$ of $W$ over $\RM$ and let
$R$ denote the regular functions on $\hg$, graded with $\deg \hg^* =2$. We denote by $\BC$ the category of Soergel bimodules; it is the full
additive monoidal Karoubian subcategory of graded $R$-bimodules generated by $B_s := R \otimes_{R^s} R(1)$ for all $s \in S$. Here, $R^s \subset R$
denotes the subalgebra of $s$-invariants, and $(1)$ denotes the grading shift which places the element $1 \otimes 1$ in degree $-1$. For any $x$ there exists up to isomorphism a unique indecomposable Soergel bimodule $B_x$ which occurs as
a direct summand of the Bott-Samelson bimodule $BS(\un{x}) = B_s\otimes_R B_t \otimes_R \dots \otimes_R B_u$ for any reduced expression $\un{x} =
st\dots u$ for $x$, but does not occur as a summand of any Bott-Samelson bimodule for a shorter expression. The bimodules $B_x$ for $x \in W$ give
representatives for the isomorphism classes of all indecomposable Soergel bimodules up to shifts. The split Grothendieck group $[\BC]$ of the
category of Soergel bimodules is isomorphic to $\HC$. The character $\ch(B) \in \HC$ of a Soergel bimodule $B$ is a $\ZM_{\ge 0}[v^{\pm}]$-linear
combination of standard basis elements $\{H_x\}$ given by counting ranks of subquotients in a certain canonical filtration; it realizes the class of
$B$ under the isomorphism $[\BC] \simto \HC$.

\begin{thm} (Soergel's conjecture) \label{thm:SC} For all $x \in W$ we
  have $\ch(B_x) = \un{H}_x$. \end{thm}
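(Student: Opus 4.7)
The plan is to follow the Hodge-theoretic strategy of de Cataldo--Migliorini outlined in the introduction, adapted to the purely algebraic setting of Soergel bimodules and carried out by induction on $\ell(x)$. Since $\{\ch(B_x)\}_{x \in W}$ is known to be a $\ZM[v^{\pm 1}]$-basis of $\HC$ with upper-triangular transition to $\{\un{H}_x\}$, and the latter is uniquely pinned down by self-duality and a leading-term condition, Soergel's conjecture for $x$ is equivalent to the assertion that for every reduced expression $\underline{x}$, the Bott-Samelson $BS(\underline{x})$ contains $B_x$ with multiplicity exactly one and every other indecomposable summand $B_y$ satisfies $y < x$, so that inductively $\ch(B_y) = \un{H}_y$. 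Standard Soergel theory recasts this decomposition as the nondegeneracy of certain \emph{local intersection forms} on finite-dimensional real vector spaces, which embed isometrically into a global bilinear form on the top-degree part of $BS(\underline{x})$.

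The engine of the induction is a Hodge-theoretic package carried through in parallel with Soergel's conjecture. For each Bott-Samelson and each indecomposable $B_y$ of length $\le n$ I would establish:
\begin{itemize}
\item[(hL)] For $\rho$ in an open positive cone $C \subset \hg^*$, left multiplication by $\rho^k$ is an isomorphism $B^{-k} \xrightarrow{\sim} B^{k}$ for every $k \ge 0$.
\item[(HR)] The canonical symmetric form on $B$ twisted by $\rho^k$ is definite, with the sign prescribed by $k$, on the primitive subspace $P^{-k}_\rho = \ker \rho^{k+1}$.
\end{itemize}
The implication ``(hL)+(HR) at length $< n$ $\Longrightarrow$ Soergel's conjecture at length $n$'' is then the de Cataldo--Migliorini ``restriction-of-a-definite-form-stays-definite'' argument, applied to the isometric embedding of the local intersection forms into the Hodge-Riemann form on the primitive part of $BS(\underline{x})^{\top}$. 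Since a form definite on an ambient space remains definite on any subspace, the local forms are forced to be nondegenerate and the correct decomposition of $BS(\underline{x})$ follows.

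The inductive ascent of (hL) and (HR) to length $n$ is the technical core. I would first establish (hL) by deforming $\rho$ along a family in $C$ that connects it to a degenerate limit, for instance an operator concentrated on one tensor factor of $BS(\underline{x}) = BS(\underline{y}) \otimes_R B_s$, where induction applies. Hardness is an open condition, and the delicate issue is to propagate it along the whole path. Given (hL) at length $n$, the Hodge-Riemann relations then follow from (HR) at length $< n$ by the classical signature-continuity argument: a nondegenerate real symmetric form cannot change signature along a connected family. The principal obstacle, and the place where a genuinely new idea is needed, is finding an algebraic substitute for the weak Lefschetz theorem used in the geometric proof to restrict to a hyperplane section --- there is no ambient variety here. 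The substitute I would develop is the \textbf{Rouquier complex} for a reduced expression $\underline{x}$: after minimalization it should be concentrated in nonnegative homological degrees with $B_x$ in degree zero and all smaller $B_y$ in strictly positive degrees. This strict positivity supplies a perverse-type filtration that plays the role of a hyperplane-section decomposition and provides the rigidity needed to bootstrap (hL) and (HR) from length $n-1$ to length $n$. Making the Rouquier complex genuinely do the work of weak Lefschetz is where the argument will stand or fall.
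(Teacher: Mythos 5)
Your outline reproduces the paper's strategy essentially verbatim: the reduction of Soergel's conjecture to definiteness of local intersection forms, the isometric embedding of those forms into the primitive subspace of $\ov{B_xB_s}$ in degree $-\ell(y)$, the parallel induction on hard Lefschetz and Hodge--Riemann with the signature-continuity limiting argument along the family of deformed Lefschetz operators $L_\zeta$, and the use of minimal (linear) Rouquier complexes as the substitute for weak Lefschetz. This is the paper's proof; what remains is the technical execution you correctly identify as the crux.
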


Because $\ch(B)$ is manifestly positive we obtain:

\begin{cor} (Kazhdan-Lusztig positivity conjecture) \begin{enumerate} \item If we write $\un{H}_x = \sum_{y \le x} h_{y,x} H_y$ then $h_{y,x} \in \ZM_{\ge 0}[v]$. \item If we write
$\un{H}_x\un{H}_y = \sum \mu^z_{x,y} \un{H}_z$ then $\mu^z_{x,y} \in \ZM_{\ge 0}[v^{\pm 1}]$. \end{enumerate} \end{cor}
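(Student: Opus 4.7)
The plan is to reduce both parts of the corollary directly to Theorem~\ref{thm:SC}; the hard step is Soergel's conjecture itself, and once it is in hand, positivity is essentially bookkeeping. The key observation is the one flagged before the corollary: the character $\ch(B)$ of any Soergel bimodule $B$ is \emph{manifestly} a $\ZM_{\ge 0}[v^{\pm 1}]$-linear combination of the standard basis elements $H_y$, because it is defined by counting graded ranks of subquotients in a canonical filtration.

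For part (1), I would expand $\un{H}_x = \ch(B_x) = \sum_y h_{y,x} H_y$ using Theorem~\ref{thm:SC}. The character expansion immediately yields $h_{y,x} \in \ZM_{\ge 0}[v^{\pm 1}]$. To sharpen this to $\ZM_{\ge 0}[v]$, I would invoke the classical fact, visible from the recursive definition of the Kazhdan-Lusztig basis in $\HC$, that $h_{y,x}$ already lies in $\ZM[v]$ (not merely in $\ZM[v^{\pm 1}]$); combined with the positivity just obtained, this forces $h_{y,x} \in \ZM_{\ge 0}[v]$.

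For part (2), I would use that $\BC$ is a Krull--Schmidt monoidal category, so the tensor product $B_x \otimes_R B_y$ decomposes as a finite direct sum $\bigoplus_z B_z^{\oplus m_z(v)}$ of grading shifts of indecomposables, with multiplicities $m_z(v) \in \ZM_{\ge 0}[v^{\pm 1}]$ encoding the shifts. Applying $\ch$ and using Theorem~\ref{thm:SC} twice (together with the fact that $\ch$ realizes the ring isomorphism $[\BC] \simto \HC$) gives
\[
\un{H}_x \un{H}_y \;=\; \ch(B_x)\ch(B_y) \;=\; \ch(B_x \otimes_R B_y) \;=\; \sum_z m_z(v)\, \un{H}_z,
\]
so $\mu^z_{x,y} = m_z(v) \in \ZM_{\ge 0}[v^{\pm 1}]$.

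The principal obstacle is of course Theorem~\ref{thm:SC} itself, which is the substance of the paper; granting that result, no further ingredients are required beyond the definition of $\ch$, the Krull--Schmidt property of $\BC$, and the monoidality of the isomorphism $[\BC] \simto \HC$.
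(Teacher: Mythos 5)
Your proposal is correct and is exactly the argument the paper intends: the corollary is stated immediately after the remark ``Because $\ch(B)$ is manifestly positive,'' and the paper's implicit proof is precisely your reduction — positivity of $\ch(B_x)$ in the standard basis plus the defining property $\un{H}_x \in H_x + \sum_{y<x} v\ZM[v]H_y$ for part (1), and the Krull--Schmidt decomposition of $B_xB_y$ together with the ring isomorphism $[\BC]\simto\HC$ for part (2).
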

(See Remark \ref{rem:oldKL} for the relation between our notation and
that of \cite{KL1}.)

We prove that indecomposable Soergel bimodules have all of the algebraic properties
known for intersection cohomology.  Given a Soergel bimodule $B$, we
denote by $\ov{B} := B \ot_R \RM$ the quotient by the
image of positive degree polynomials acting on the right.
We let
$(\overline{B})^i$ denote the degree $i$ component of
$\overline{B}$. The self-duality of Soergel bimodules implies that
$\dim_{\RM}(\overline{B_x})^{-i} = \dim_{\RM}(\overline{B_x})^i$ for all $i$.
For the rest of the paper we fix a degree two element $\rho \in \hg^*$
which is strictly positive on any simple coroot $\alpha_s^\vee \in
\hg$ (see \S\ref{sec:cox}).

\begin{thm} (Hard Lefschetz for Soergel bimodules) \label{thm:HL} The action of $\rho$ on $B_x$ by left multiplication induces an operator on $\ov{B_x}$ which satisfies the hard
Lefschetz theorem. That is, left multiplication by $\rho^i$ induces an
isomorphism \[ \rho^i  : (\ov{B_x})^{-i} \simto (\ov{B_x})^i. \] \end{thm}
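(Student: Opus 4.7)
The plan is to establish Theorem \ref{thm:HL} simultaneously with its Hodge--Riemann companion (HR) and Soergel's conjecture (Theorem \ref{thm:SC}), by a joint induction on the Bruhat length $\ell(x)$. These three statements are inextricably linked: (SC) controls the combinatorial shape of the decomposition of Bott--Samelson bimodules; (HR) provides the sign-definiteness from which non-degeneracy of intersection forms follows; and (HL) is the desired operator statement. Attempting to prove any of them in isolation seems hopeless.

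For the inductive step, fix $x \in W$ of length $n$, assume all three statements for all $y$ with $\ell(y) < n$, pick a reduced expression $\un{x} = s_1 \cdots s_n$, and form the Bott--Samelson bimodule $BS(\un{x})$. By Soergel's results, $BS(\un{x})$ contains $B_x$ as a summand with multiplicity one, the complementary summands being shifts of $B_y$'s with $y < x$ (for which everything is known inductively). Both $\ov{B_x}$ and $\ov{BS(\un{x})}$ carry natural intersection forms coming from a Poincar\'e-type duality, and the canonical inclusion $\ov{B_x} \hookrightarrow \ov{BS(\un{x})}$ identifies the local form on $\ov{B_x}$ with the restriction of the global form on $\ov{BS(\un{x})}$. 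This is our algebraic incarnation of de Cataldo--Migliorini's embedding of local intersection forms into global ones.

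The heart of the argument is then to prove (HL) and (HR) for $BS(\un{x})$ with Lefschetz operator $\rho$. Once (HR) is established globally, its restriction to the subspace $\ov{B_x}$ stays sign-definite on each primitive subspace of the $\mathfrak{sl}_2$-representation generated by $\rho$, and hence non-degenerate; non-degeneracy of the $\rho$-intersection pairing on $\ov{B_x}$ is exactly (HL) for $B_x$. To prove (HL) for $BS(\un{x})$ itself, write $\un{x} = \un{w}\,s$ and use the Rouquier complex $F_s = (B_s \to R(1))$ as an algebraic substitute for a hyperplane section: tensoring with $BS(\un{w})$ produces a short distinguished triangle relating $BS(\un{x})$ to Bott--Samelsons of shorter expressions, and a careful analysis of how $\rho$-multiplication interacts with this triangle reduces (HL) for $BS(\un{x})$ to the inductively-known (HL) for shorter Bott--Samelsons, in the spirit of the Lefschetz hyperplane theorem. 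Once (HL) is in place for $BS(\un{x})$, (HR) is obtained by a deformation argument as in highlight (3) of the introduction: one constructs a one-parameter family $\rho_t$ of real symmetric Lefschetz operators connecting $\rho$ to a reference operator for which (HR) is already known (e.g.\ one inherited from the shorter Bott--Samelson via K\"unneth), verifies that (HL) persists along the family so that the intersection forms stay non-degenerate, and concludes by conservation of signature.

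The principal obstacle is the replacement of weak Lefschetz / the hyperplane section in the induction over Bott--Samelson length. In de Cataldo--Migliorini this is a geometric move, and no such move is available here; instead we must work with the Rouquier complex, which is a genuine two-term complex rather than a single object, and control how its differential interacts with both the Lefschetz operator $\rho$ and the intersection pairing. A secondary subtlety is that the same indecomposable $B_x$ sits inside many different Bott--Samelsons, so the joint induction must be organized so that (HL)/(HR) for the Bott--Samelsons produced at each stage transfers consistently to the indecomposables. Once this machinery is set up, the character identity $\ch(B_x) = \un{H}_x$ in Soergel's conjecture drops out as a combinatorial consequence of the non-degeneracy of the local intersection forms, closing the induction.
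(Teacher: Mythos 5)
Your high-level architecture matches the paper's: a joint induction in which Soergel's conjecture, hard Lefschetz and Hodge--Riemann are proved together, the local intersection form is embedded isometrically into a global one, the Rouquier complex replaces the hyperplane section, and Hodge--Riemann is recovered from hard Lefschetz by deforming through a family of operators and conserving signature. But the heart of your argument --- ``prove (HL) and (HR) for $BS(\un{x})$ with Lefschetz operator $\rho$'' --- is false as stated, and this is precisely the difficulty the paper's machinery is built to avoid. A reduced Bott--Samelson bimodule in general contains shifted indecomposable summands in dual pairs $B_y(k)\oplus B_y(-k)$ with $k\neq 0$ (e.g.\ $B_{s_2}B_{s_1}B_{s_3}B_{s_2}\cong B_{s_2s_1s_3s_2}\oplus B_{s_2}(1)\oplus B_{s_2}(-1)$ in type $A_3$). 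Left multiplication by $\rho$ preserves each such summand, and on $\ov{B_y}(1)\oplus\ov{B_y}(-1)$ the map $\rho^i$ is surjective but not injective on one block and injective but not surjective on the other, so hard Lefschetz fails for $\ov{BS(\un{x})}$ even for reduced $\un{x}$. For the same reason your proposed reduction via the two-term complex $BS(\un{w})\otimes F_s$ cannot work: the target $BS(\un{w})(1)$ does not satisfy the Hodge--Riemann relations, so Lemma \ref{lem: weak lefschetz substitute} has nothing to bite on.

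The paper circumvents this in three coupled ways that are missing from your sketch. First, the global object is not $BS(\un{x})$ but $B_xB_s$ with $xs>x$, which is perverse (no shifted summands) once $S(x)$ is known; correspondingly the local form lives on $\Hom(B_y,B_xB_s)$ and is embedded into the primitive part of $(\ov{B_xB_s})^{-\ell(y)}$ (Theorem \ref{thm:embedding}). Second, the Lefschetz operator itself is deformed to $L_\zeta=(\rho\cdot-)\id_{B_s}+\id_{B_x}(\zeta\rho\cdot-)$: for $\zeta>0$ this operator does \emph{not} respect the splitting $B_yB_s\cong B_y(1)\oplus B_y(-1)$ when $ys<y$, and hard Lefschetz is restored there by an explicit $\sl_2$ computation (Theorem \ref{thm:hLI}); the known endpoint of the family is $\zeta\to\infty$, where the form tends to a product form (Theorem \ref{thm:towards infinity}), not a K\"unneth-type reference operator at finite time. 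Third, the Rouquier complex must be taken \emph{minimal}, and one needs its linearity (Theorem \ref{thm:rouquier linear}), the orthogonal splitting of ${}^1F_x$ into $B^\uparrow\oplus B^\downarrow$, and the vanishing of Lefschetz forms on shifted pieces (Lemma \ref{lem:shiftedvanishing}) in order to manufacture a target for Lemma \ref{lem: weak lefschetz substitute} that genuinely satisfies Hodge--Riemann. Without these three moves the induction you describe does not close.
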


We say that a graded $R$-valued form \[ \langle -, - \rangle : B_x \times B_x \to R \] is \emph{invariant} if it is bilinear for the right action of $R$, and if $\langle rb,
b' \rangle = \langle b, rb' \rangle$ for all $b,b' \in B$ and $r \in
R$. Theorem \ref{thm:SC} and Soergel's hom formula (see Theorem
\ref{thm:hom}) imply that the
degree zero endomorphisms of $B_x$ consist only of scalars, i.e. $\End(B_x) = \RM$. Combining this with the self-duality of
indecomposable Soergel bimodules, we see that there exists an
invariant form  $\langle -, - \rangle_{B_x}$ on $B_x$ which is unique up to a
scalar. We write $\langle -, - \rangle_{\ov{B_x}}$ for the $\RM$-valued form on $\ov{B_x}$ induced by $\langle -, - \rangle_{B_x}$. We
fix the sign on $\langle -, - \rangle_{B_x}$ by requiring that $\langle \ov{c}, \rho^{\ell(x)} \ov{c} \rangle_{\ov{B_x}} > 0$, where $c$ is any generator of $B_x^{-\ell(x)} \cong \RM$.
With this additional positivity constraint, we call $\langle -, -
\rangle_{B_x}$ the \emph{intersection form} on $B_x$. It is well-defined up to positive scalar.

\begin{thm} (Hodge-Riemann bilinear relations) \label{thm:HR} For all
  $i \ge 0$ the Lefschetz form on $(\ov{B_x})^{-i}$ defined by \[ (\alpha, \beta)_{-i}^\rho := \langle \a, \rho^i \b \rangle_{\ov{B_x}} \]
is $(-1)^{(-\ell(x) + i)/2}$-definite when restricted to the primitive
subspace \[P_\rho^{-i} = \ker( \rho^{i+1}) \subset (\ov{B_x})^{-i}.\] \end{thm}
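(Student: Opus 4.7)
The plan is to prove Theorem~\ref{thm:HR} in tandem with Theorems~\ref{thm:HL} and \ref{thm:SC} by simultaneous induction on $\ell(x)$, the base case $\ell(x) = 0$ being trivial. Fix $x$ with $\ell(x) = n$, pick a reduced expression $\un{x} = s_1 \cdots s_n$, set $s = s_n$, and let $\un{y} = s_1 \cdots s_{n-1}$, so that the Bott-Samelson $BS(\un{x}) = BS(\un{y}) \ot_R B_s$ decomposes as $B_x \oplus \bigoplus_j B_{z_j}(k_j)$ with $z_j < x$. By induction every indecomposable $B_z$ with $\ell(z) < n$ satisfies HL, HR and Soergel's conjecture.

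My first technical step is to promote HR from the indecomposables to $\ov{BS(\un{y})}$. Since $BS(\un{y}) \cong B_y \oplus \bigoplus_i B_{w_i}(k_i)$ with $w_i < y$ by the inductive Soergel conjecture, and each indecomposable summand satisfies HR by the inductive hypothesis, the direct sum satisfies HR with respect to the ``diagonal'' Lefschetz operator that acts as $\rho$ on each summand. A short deformation argument, using that the signature of a non-degenerate real symmetric form is locally constant, then yields HR on $\ov{BS(\un{y})}$ with respect to genuine left multiplication by $\rho$.

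The heart of the proof is the transfer of HL, and then HR, from $\ov{BS(\un{y})}$ to $\ov{BS(\un{x})}$. Here I would use the Rouquier complex $F_s$ associated to $s$ as an algebraic replacement for a hyperplane section: $F_s$ and its dual fit into exact triangles which, applied to $BS(\un{x})$, produce a relationship between $\ov{BS(\un{x})}$ and shifts of $\ov{BS(\un{y})}$ playing the role of the weak Lefschetz long exact sequence. Combined with the inductive HR on $\ov{BS(\un{y})}$, this should yield HL for $\ov{BS(\un{x})}$ with respect to $\rho$. HR for $\ov{BS(\un{x})}$ is then deduced from HL by the de Cataldo-Migliorini limiting argument: choose a path $L_t$ of Lefschetz operators on $\ov{BS(\un{x})}$ starting at an element whose HR is available (for instance via the right-factor $B_s$ structure together with HR on $\ov{BS(\un{y})}$) and ending at $\rho$; since HL holds along the path, the Lefschetz form on the primitive subspace stays non-degenerate and its signature cannot change.

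Finally, HR for the indecomposable $B_x$ is extracted from HR for $\ov{BS(\un{x})}$: the intersection form on $B_x$ is the restriction of the invariant form on $BS(\un{x})$ to the $B_x$ summand, the contribution of each $B_{z_j}(k_j)$ summand to the Lefschetz form is controlled by inductive HR, and the signature on the primitive part of $\ov{B_x}$ can therefore be computed. The main obstacle will be the step using the Rouquier complex to prove HL for $\ov{BS(\un{x})}$; this is the key new ingredient highlighted by the authors in the introduction, and extracting a genuine hard Lefschetz statement from the Rouquier complex is the technically deepest point. Everything else follows the de Cataldo-Migliorini playbook or is a formal consequence of Soergel's hom formula combined with the induction.
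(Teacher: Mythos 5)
Your outline reproduces the paper's slogans (induction, Rouquier complexes as a weak-Lefschetz substitute, the de Cataldo--Migliorini signature-deformation argument), but it is built on the wrong spaces, and this is a genuine gap rather than a presentational difference. You propose to establish hard Lefschetz and Hodge--Riemann for $\ov{BS(\un{y})}$ and then for $\ov{BS(\un{x})}$, with the Lefschetz operator being left multiplication by $\rho$ centered at degree $0$. But a Bott--Samelson bimodule of a reduced expression is not perverse in general: it contains indecomposable summands $B_z(k)$ with $k \ne 0$ (already for $\un{x} = s_1s_2s_1s_3s_2s_1$ in type $A_3$ one finds $B_{s_1s_2s_1}(1) \oplus B_{s_1s_2s_1}(-1)$ inside $BS(\un{x})$). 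For such a summand $\rho^{i}\colon \ov{BS(\un{x})}^{-i} \to \ov{BS(\un{x})}^{i}$ cannot be an isomorphism — the Lefschetz symmetry of $\ov{B_z(k)}$ is centered at $-k$, not at $0$ — so $hL$ and $HR$ for $\ov{BS(\un{x})}$ are simply false, and your step ``promote HR from the indecomposables to $\ov{BS(\un{y})}$'' cannot be carried out (note also that left multiplication by $\rho$ already preserves each bimodule summand, so there is nothing for your deformation argument there to interpolate between; the obstruction is the shifts, not the choice of operator). The paper avoids this by proving $hL$ and $HR$ for $\ov{B_xB_s}$ with $B_x$ \emph{indecomposable} and $xs > x$ (this module is perverse when $S(\le x)$ holds), and by replacing the full complex $F_{s_1}\cdots F_{s_m}$ with its \emph{minimal} subcomplex $F_x$: the linearity theorem (${}^jF_x$ concentrated in perverse degree $-j$) is exactly what controls the shifts that ruin your version, and it is a nontrivial input requiring $S(\le x)$ and the $\Delta$-splitness of Rouquier complexes.

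Two further points. First, the induction you propose (on $\ell(x)$, carrying only $S$, $hL$, $HR$) does not close: to run the limiting argument one needs hard Lefschetz for the whole one-parameter family $L_\z = \rho\cdot(-)\otimes\id + \id\otimes(\z\rho\cdot -)$ on $\ov{B_xB_s}$ for all $\z \ge 0$, with the known Hodge--Riemann endpoint obtained at $\z \gg 0$ from $HR(\un{x})$ (Theorem \ref{thm:towards infinity}) — not from a finite starting point on $\ov{BS(\un{x})}$ — and the inductive package must include the statements $HR(z,t)_\z$ for $\z > 0$ also when $zt < z$, since such summands $B_z$ occur in ${}^1F_x$ and contribute $B_zB_s$ to the target of the weak-Lefschetz map. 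Second, your final step (extracting $HR$ for $B_x$ by restriction) and the deduction of $S(x)$ both go through the embedding theorem: the local intersection form on $\Hom(B_y, B_xB_s)$ embeds isometrically into the primitive part of $(\ov{B_xB_s})^{-\ell(y)}$, and its definiteness is what yields $S(xs)$. This again requires $HR$ for $\ov{B_xB_s}$, not for $\ov{BS(\un{x})}$; as written, ``a formal consequence of Soergel's hom formula'' undersells a step that is impossible from the space you chose to work with.
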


Note that $B_x^{-i} = 0$ unless $i$ and $\ell(x)$ are congruent modulo
2. Throughout this paper we adopt the convention that if $m$ is odd then a space is $(-1)^{\frac{m}{2}}$-definite
if and only it is zero. The reader need not worry too much about the sign in this and other Hodge-Riemann statements. Throughout the introduction the form on the
lowest non-zero degree will be positive definite, and the signs on
primitive subspaces will alternate from there upwards.

As an example of our results, consider the case when $W$ is
finite.  If $w_0 \in W$
denotes the longest element of $W$, then $B_{w_0} = R \otimes_{R^W} R(\ell(w_0))$, where $R^W$
denotes the subalgebra of $W$-invariants in $R$. Hence
\[
\overline{B_{w_0}} = (R \otimes_{R^W} R) \otimes_{R} \RM (\ell(w_0)) = R/( (R^W)^+)(\ell(w_0))
\]
is the coinvariant ring, shifted so as to have Betti numbers
symmetric about zero (here $((R^W)^+)$ denotes the ideal of $R$
generated by elements of $R^W$ of positive degree). The coinvariant ring
is equipped with a canonical symmetric form and Theorems
\ref{thm:HL} and \ref{thm:HR} yield that left multiplication by any
$\rho$ in the interior of the dominant chamber of $\hg^*$ satisfies
the hard Lefschetz theorem and Hodge-Riemann bilinear relations.

If $W$ is a Weyl group of a compact Lie group $G$, then the coinvariant
ring above  is isomorphic to the real cohomology ring of the flag
variety of $G$ and the hard Lefschetz theorem and Hodge-Riemann
bilinear relations follow from classical Hodge theory, because the
flag variety is a projective algebraic variety. On the other hand if
$W$ is not a Weyl group (e.g. a non-crystallographic dihedral group,
or a group of type $H_3$ or $H_4$) then there is no obvious geometric
reason why the hard Lefschetz theorem or Hodge-Riemann bilinear
relations should hold. The hard Lefschetz property for coinvariant
rings has been studied by
a number of authors \cite{MNW, MW, McD} but even for the coinvariant rings of
$H_3$ and $H_4$ the fact that the Hodge-Riemann bilinear relations
hold seems to be new.

\subsection{Outline of the proof} \label{sec:outline}

\subsubsection{Setup}
Our proof is by induction on the Bruhat order, and the hard Lefschetz
property and Hodge-Riemann bilinear relations play an essential role
along the way. Throughout this paper we employ the following
abbreviations for any $x \in W$:
\begin{gather*} S(x) : \begin{array}{c}
\text{Soergel's conjecture for $B_x$: }\\
\text{Theorem \ref{thm:SC} holds for $x$.}\end{array} \\
hL(x): \begin{array}{c} \text{hard Lefschetz for
      $\ov{B_x}$:}\\
\text{Theorem \ref{thm:HL} holds for $x$.} \end{array} \\
HR(x): \begin{array}{c} \text{the Hodge-Riemann bilinear
      relations for $\ov{B_x}$:}\\\text{$S(x)$ holds and Theorem \ref{thm:HR}
      holds for $x$.} \end{array}\end{gather*}
The abbreviation $hL(<\!x)$
means that $hL(y)$ holds for all $y < x$. Similar interpretations hold
for abbreviations like $S(\le\!x)$, etc.

In the statement of $HR(x)$ it is necessary to assume $S(x)$ to ensure the
uniqueness (up to positive scalar) of the intersection form on $B_x$. 
However we need not assume $S(x)$ in order to ask
whether a given form on $B_x$ (not necessarily the intersection form)
induces a form on $\overline{B_x}$ satisfying the Hodge-Riemann
bilinear relations. Now $B_x$ appears as a summand of the
Bott-Samelson  
bimodule $BS(\un{x})$ for any reduced expression $\un{x}$ for $x$. 
Bott-Samelson bimodules are equipped with an explicit symmetric non-degenerate
\emph{intersection form} defined using the ring
structure and a trace on $BS(\un{x})$ (just as the
intersection form on the cohomology of a smooth projective variety is
given by evaluating the fundamental class on a product). The following
stronger version of $HR(x)$ is more useful in induction
steps, as it can be posed without assuming $S(x)$:
\begin{gather*}
HR(\un{x}): \begin{array}{c} \text{for any
    embedding $B_x \subset
    BS(\un{x})$} \\
\text{the Hodge-Riemann bilinear
      relations hold:}\\\text{the conclusions of Theorem \ref{thm:HR}
      hold for the } \\
\text{restriction of the intersection form on
      $BS(\un{x})$ to $B_x$.} \end{array}
\end{gather*}
(Here and elsewhere an ``embedding'' of Soergel
bimodules means an ``embedding as a direct summand''.)
Together, $S(x)$ and $HR(\un{x})$ imply that the restriction of the
intersection form on $BS(\un{x})$ to $B_x$ agrees with the intersection
form on $B_x$ up to a positive scalar, for any choice of embedding
(see Lemma \ref{lem:HRembedding} for the proof). In other words:
\begin{equation}
  \label{eq:1}\begin{matrix}
  \text{If $S(x)$ holds, then $HR(x)$ and $HR(\un{x})$ are equivalent,}\\
 \text{for any reduced expression $\un{x}$ of $x$.}
\end{matrix}
\end{equation}

We now give the structure of the proof. In \S \ref{ss:sclif}, \S
\ref{ss:lgic} and \S \ref{sc:la} we introduce and explain the
implications between statements needed to perform the induction. In \S\ref{ss:sp} we
give a summary of the induction.

We make the following assumption:
\begin{equation}
\begin{matrix} \text{In \S \ref{ss:sclif}, \S
\ref{ss:lgic} and \S \ref{sc:la} we fix $x \in W$ and $s \in S$ with
$xs > x$} \\
\text{and assume that $S(<\!xs)$ holds.}
\end{matrix}
\end{equation}
\subsubsection{Soergel's conjecture and the local intersection form} \label{ss:sclif}
By Soergel's hom formula (see Theorem \ref{thm:hom}), $S(<\!xs)$ is
equivalent to assuming that $\End(B_y) = \RM$ for all $y <
xs$. Consider the 
form given by composition \[ (-,-)_y^{x,s}: \Hom(B_y, B_xB_s) \times
\Hom(B_xB_s, B_y) \to \End(B_y) = \RM. \] Soergel's hom formula gives
an expression for the dimension of these hom spaces in terms of an
inner product on the Hecke algebra.  Applying this formula one sees that $S(xs)$ is
equivalent to the non-degeneracy of this form for all $y < xs$ (see
\cite[Lemma 7.1(2)]{S3}). Now $B_y$ and $B_xB_s$ are naturally
equipped with symmetric invariant bilinear forms (see \S \ref{sec:embedding}) so there is a
canonical identification (``take adjoints'') \[ \Hom(B_y, B_xB_s) =
\Hom(B_xB_s, B_y). \] Hence  we can view $(-,-)_y^{x,s}$ as a bilinear form on the real vector space
$\Hom(B_y, B_xB_s)$. We call this form the \emph{local intersection form}. We consider ``Soergel's conjecture with signs'': \begin{gather*} S_{\pm}(y,x,s): \text{the form $(-,-)_y^{x,s}$ is
$(-1)^{(\ell(x)+1-\ell(y))/2}$-definite.} \end{gather*} This is a
priori stronger than Soergel's conjecture. By the above discussion:
\begin{equation}
  \label{eq:2}
  \text{$S(<\!xs)$ and $S_{\pm}(<\!xs,x,s)$ imply $S(xs)$.}
\end{equation}

\subsubsection{From the local to the global intersection form} \label{ss:lgic}
To prove $S_{\pm}(y,x,s)$, we must digress and discuss hard
Lefschetz and the Hodge-Riemann bilinear relations for 
$\overline{B_xB_s}$. The connection is explained by \eqref{eq:3} below.
Recall that we have fixed a degree two element $\rho \in R$ such that
$\rho(\a^\vee_s) > 0$ for all simple coroots $\a_s^\vee$. Consider the
``hard Lefschetz'' condition: \begin{gather*} hL(x,s): \quad 
  \text{$\rho^i : (\ov{B_xB_s})^{-i} \to (\ov{B_xB_s})^i$ is an
isomorphism.} \end{gather*} Because $B_{xs}$ is a direct summand of
$B_xB_s$, $hL(x,s)$ implies $hL(xs)$. They are equivalent if we know
$hL(<\!xs)$, since every other indecomposable summand of
$B_x B_s$ is of the form $B_y$ for $y<xs$ (a consequence of
our standing assumption $S(<\!xs)$).

If we fix a reduced expression $\un{x}$ for $x$ and an embedding $B_x
\subset BS(\un{x})$ then $B_x$ inherits an
invariant form from $BS(\un{x})$ as discussed above.  Similarly, $B_x B_s$ is a summand
of $BS(\un{x}s)$ and inherits an invariant form, which we denote
$\langle -, - \rangle_{B_x B_s}$. We formulate the Hodge-Riemann bilinear relations for
$\ov{B_xB_s}$ as follows:
\begin{gather*} HR(\un{x},s): \begin{array}{c}
\text{for any embedding $B_x \subset BS(\un{x})$}\\
\text{the Lefschetz form $(\a, \b)^{-i}_\rho
:= \langle \a, \rho^i \b \rangle_{\ov{B_x B_s}}$ is}\\ \text{$(-1)^{(\ell(x) + 1 - i)/2}$-definite on the primitive subspace } \\ \text{} P^{-i}_{\rho} := \ker (\rho^{i+1}) \subset
(\ov{B_xB_s})^{-i}. \end{array} \end{gather*}

Once again, using that $B_x B_s \cong B_{xs} \oplus \bigoplus B_y^{\oplus m_y}$ for some $m_y \in \ZM_{\ge 0}$ one may deduce easily that
$HR(\un{x},s)$ implies $HR(\un{x}s)$ (see Lemma \ref{lem: HR subspace}). However $HR(\un{x},s)$ is stronger than assuming $HR(\un{x}s)$ and
$HR(y)$ for all $y < xs$ with $m_y \ne 0$, because it fixes the sign of the restricted form. Indeed, $HR(\un{x},s)$ is equivalent to the statement
that the restriction of $\langle -, - \rangle_{B_xB_s}$ to any summand $B_y$ of $B_x B_s$ is $(-1)^{(\ell(xs)-\ell(y))/2}$ times a positive multiple
of the intersection form on $B_y$. For later use, we employ the following abbreviation:
\begin{gather*} HR(x,s): \begin{array}{c}
\text{$HR(\un{x},s)$ holds, for all reduced expressions $\un{x}$ of $x$.}
 \end{array} \end{gather*}

Recall that the space $\Hom(B_y, B_xB_s)$ is equipped with the local
intersection form $(-,-)_y^{x,s}$ and that $(\overline{B_x
  B_s})^{-\ell(y)}$ is equipped with the Lefschetz form
  $(-,-)_\rho^{-\ell(y)}$. The motivation for introducing the
  condition $HR(\un{x},s)$ is the following (see Theorem \ref{thm:embedding}):
for any $\rho$ as above there exists an embedding: \[ \iota: \Hom(B_y, B_xB_s) \hookrightarrow P^{-\ell(y)}_\rho \subset
(\overline{B_x B_s})^{-\ell(y)}. \] Moreover, this embedding is an isometry up to a positive scalar.

Because the restriction of a definite form to a subspace is definite,
we obtain:
\begin{equation}
  \label{eq:3}
  \text{$S(<\!xs)$ and $HR(\un{x},s)$ imply $S_{\pm}(<\!xs,x,s)$.}
\end{equation}
Combining \eqref{eq:3} and \eqref{eq:2} and the above discussion, we
arrive at the core statement of our induction:
\begin{equation}
  \label{eq:core}
  \text{$S(<\!xs)$ and $HR(\un{x},s)$ imply $S(\le \!xs)$ and $HR(\un{x} s)$.}
\end{equation}
It remains to show that $S(\le\!x)$ and $HR(\le\!x)$ implies $HR(\un{x},s)$. This reduces Soergel's conjecture to a  statement about the
modules $\ov{B_xB_s}$ and their intersection forms. 

\subsubsection{Deforming the Lefschetz operator} \label{sc:la}
The reader might have noticed that $hL$ seems to have disappeared from the
picture. Indeed, $HR$ is stronger than $hL$, and one might ask why we
wish to treat $hL$ separately. The reason is that it seems extremely
difficult to attack $HR(\un{x}, s)$ directly.  As we noted earlier, de Cataldo and Migliorini's
method of proving $HR$ consists in proving $hL$ first for a family of operators, and using a limiting argument to deduce $HR$.

We adapt their limiting argument as follows. For any real number $\z \ge 0$, consider the Lefschetz operator \[ L_\z := (\rho \cdot -) \id_{B_s} + \id_{B_x} (\z \rho \cdot-) \] which we view as an
endomorphism of $B_xB_s$. Here $(\rho \cdot - )$ (resp. $ (\zeta \rho \cdot -)$) denotes the operator of left multiplication on $B_x$ (resp. $B_s$) by $\rho$ (resp. $\z \rho$) and
juxtaposition denotes tensor product of operators. Now consider the
following ``$\z$-deformations'' of the above
statements:
\begin{gather*} hL(x,s)_\z: \quad \text{$L_\z^i : (\ov{B_xB_s})^{-i} \to
(\ov{B_xB_s})^i$ is an isomorphism.} \end{gather*} \begin{gather*}
HR(\un{x},s)_\z: \quad \begin{array}{c}  \text{for any embedding $B_x
    \subset BS(\un{x})$} \\
\text{the Lefschetz form $(\a, \b)_{-i}^\rho := \langle \a, L_\z^i \b \rangle_{\ov{B_x
B_s}}$}\\ \text{is $(-1)^{(\ell(x) + 1 - i)/2}$-definite on the
primitive subspace } \\ \text{} P^{-i}_{L_\z} := \ker (L_{\z}^i)
\subset (\ov{B_xB_s})^{-i}. \end{array}\\
  HR(x,s)_\z: \begin{array}{c}
\text{$HR(\un{x},s)_\z$ holds, for all reduced expressions $\un{x}$ of
  $x$.} \end{array}
\end{gather*}

Note that $L_0$ is simply left multiplication by $\rho$, and hence
$hL(x,s)_0 = hL(x,s)$, $HR(\un{x},s)_0 = HR(\un{x},s)$ and $HR(x,s)_0
= HR(x,s)$. The signature of a family of non-degenerate symmetric real
forms cannot change in the family. Therefore, if $hL(x,s)_\z$ holds for all $\z \ge 0$ and $HR(\un{x},s)_\z$ holds for any single non-negative value of $\z$, then $HR(\un{x},s)_0$ also holds. (This is the essence
of de Cataldo and Migliorini's limiting argument.)

The first hint that this deformation is promising is Theorem \ref{thm:towards infinity}:
\begin{equation}
  \label{eq:limit}
  \text{$HR(\un{z})$ implies $HR(\un{z}, s)_\z$ for $\z \gg 0$}
\end{equation}
(which holds regardless of whether $zs > z$ or $zs < z$). Therefore, we have
\begin{equation}
  \label{eq:hLzs>s}
  \text{$HR(\un{x})$ and $hL(x,s)_\z$ for all $\z \ge 0$, implies $HR(\un{x},
    s)_\z$ for all $\z \ge 0$.}
\end{equation}
In particular, the fact that $hL(z,s)_\z$ and $HR(\un{z},s)_\z$ hold for all $\z \ge 0$ and all $z<x$ with $sz > z$ is something we may inductively assume, when trying to prove the same facts for $x$.

We have reduced our problem to establishing $hL(x,s)_\z$ for $\z \ge 0$. In de Cataldo and Migliorini's approach this is established using the weak
Lefschetz theorem and the Hodge-Riemann bilinear relations in smaller dimension. In our setting the weak Lefschetz theorem is missing, and a key
point is the use of Rouquier complexes as a replacement (see the first few paragraphs of \S\ref{sec:HL} for more details). The usual proof of $hL$
for a vector space $V$ is to find a map $V \to W$ of degree 1, injective on $V^{-i}$ for $i>0$ and commuting with the Lefschetz operator, where $HR$
is known to hold for $W$. The Rouquier complex yields a map of degree 1 from $B_x B_s$, injective on negative degrees and commuting with $L$, to a
direct sum of $B_x$ and terms of the form $B_z B_s$ for summands $B_z$
of $BS(\un{x})$ with $z < x$. This target space does not satisfy the
Hodge-Riemann bilinear relations, but
nevertheless we are able to prove the hard Lefschetz theorem.

When $\z=0$, we have an argument which shows:
\begin{equation}\label{eq:hLz=0} \begin{array}{c} \text{$S(\le\!x)$,
      $hL(<\!xs)$, $HR(x)$ and $HR(z,t)$ for all $z < x$ with $zt > z$}\\
\text{ together imply $hL(x,s)$.} \end{array} \end{equation}
This is Theorem \ref{thm:hL}. One feature of the proof is that, whenever $zs < z$, the decomposition $B_z B_s \cong B_z(1) \oplus B_z(-1)$
commutes with the Lefschetz operator $L_0$. This decomposition allows one to bypass the fact that $HR(z,s)$ fails if $zs < z$.

When $\z>0$, the decomposition $B_z B_s \cong B_z(1) \oplus B_z(-1)$ for $zs < z$ does not commute with $L_\z$. However, proving $hL(z,s)_\z$ for
$\z>0$ and $zs < z$ using $hL(z)$ is a straightforward computation (Theorem \ref{thm:hLI}). Our inductive hypotheses and the limiting argument above
now yield $HR(\un{z},s)_\z$ for all $z<x$. A similar argument to the previous case shows: \begin{equation}\label{eq:hLz>0} \begin{array}{c}\text{For $\z>0$,
$S(\le\!x)$, $HR(\le\!x)$, $HR(<\! x,s)_\z$}\\\text{and $HR(z,t)$ for
all $z < x$ with $zt > z$ imply $hL(x,s)_\z$.} \end{array} \end{equation} This is Theorem \ref{thm:hLII}.

\subsubsection{Structure of the proof} \label{ss:sp}
Let us summarise the overall inductive proof. Let $X \subset W$ be an
ideal in the Bruhat order (i.e. $z \le x \in X \Rightarrow z \in X$) and assume:
\begin{enumerate}
\item $HR(z,t)_\z$ for all $\z \ge 0$, $z < zt \in X$ and $t \in S$,
\item $HR(z,t)_\z$ for all $\z > 0$, $zt < z \in X$ and $t \in S$.
\end{enumerate}
We have already explained why (1) implies $S(X)$, $hL(X)$ and $HR(X)$.

Now choose a minimal element $x'$ in the complement of $X$, and choose $s \in S$ and $x \in X$ with $x' = xs$. As we just discussed,
\eqref{eq:hLz=0} and \eqref{eq:hLz>0} imply that $hL(x,s)_\z$ holds for all $\z \ge 0$. Using $HR(\un{x})$ and \eqref{eq:limit} we deduce
$HR(x,s)_\z$ for all $\z \ge 0$. Therefore (1) holds with $X$ replaced by $X \cup \{x'\}$, and thus $S(x')$, $hL(x')$, and $HR(x')$ all hold.

As above, the straightforward calculations of Theorem \ref{thm:hLI} show that $hL(x',t)_\z$ holds for $\z>0$ when $t \in S$ satisfies $x't < x'$.
Again by $HR(x')$ and \eqref{eq:limit} we have $HR(x',t)_\z$ for all $\z>0$ in this case. Thus (2) holds for $X \cup \{x'\}$ as well.

By inspection, (1) and (2) hold for the set $X = \{w \in W \;| \;\ell(w) \le 2\}$. Hence by induction we obtain (1) and (2) for $X=W$. We have already
explained why this implies all of the theorems in \S\ref{sec:results}.

\subsection{Note to the reader} In order to keep this paper short and
have it cite only available sources, we have written it in the
language of \cite{S3}. However \cite{S3} is not an easy
paper, and we make heavy use of its results. We did not discover the results of this paper in this language, but rather in the diagrammatic language of \cite{EW1} and \cite{EW2}.
These papers also provide alternative proofs of the requisite results from \cite{S3}.

\subsection{Acknowledgements} The second author would like to thank
Mark Andrea de Cataldo, Peter Fiebig, Luca Migliorini and Wolfgang Soergel for
useful discussions. Part of this work was 
completed when the first author visited the MPIM, and the second
author visited Columbia University. Both authors would like to
thank both institutions. We would also like to thank Mikhail Khovanov
for encouraging our collaboration. Finally, thanks to Henning Haahr
Andersen, Nicolas Libedinsky, Walter Mazorchuk, Patrick Polo and the
referee for detailed comments and suggestions.

The second author presented the results of this paper at the conference ``Lie algebras and applications'' in Uppsala in September, 2012.

The second author dedicates his work to the memory of
Leigh, who would not have given two hoots if certain
polynomials have positive coefficients!


\section{Lefschetz linear algebra}

\label{sec:lla}

  Let $H = \bigoplus_{i \in \ZM} H^i$ be a graded finite dimensional
real vector space equipped with a non-degenerate symmetric 
bilinear form \[ \langle -, - \rangle_H : H \ot_{\RM} H \to \RM \] which is \emph{graded} in the sense that $\langle H^i, H^j \rangle = 0$ unless $i = -j$.

Let $L \colon H^\bullet \to H^{\bullet + 2}$ denote an operator of degree 2. We may also write $L \in \Hom(H,H(2))$, where $(2)$ indicates a grading shift. We say that $L$ is a
\emph{Lefschetz operator} if $\langle Lh, h' \rangle = \langle h, Lh'
\rangle$ for all $h, h' \in H$. We assume from now on that $L$ is a
Lefschetz operator. We say that $L$ \emph{satisfies
  the hard Lefschetz theorem} if $L^i : H^{-i} \to 
H^{i}$ is an isomorphism for all $i \in \ZM_{\ge 0}$. For $i \ge 0$
set \[ P^{-i}_L := \ker L^{i+1} \subset H^{-i}. \] We call $P^{-i}_L$
the \emph{primitive subspace} of $H^{-i}$ (with 
respect to $L$). If $L$ satisfies the hard Lefschetz theorem then we
have a decomposition \[ H = \bigoplus_{i \ge 0 \atop 0 \le j
  \le i} L^j P_L^{-i}. \] This is the 
\emph{primitive decomposition} of $H$.

For each $i \ge 0$ we define the \emph{Lefschetz form} on $H^{-i}$
via \[ (h,h')^{-i}_L := \langle h, L^i h' \rangle. \] All Lefschetz
forms are non-degenerate if and only if $L$ satisfies the hard
Lefschetz theorem, because $\langle -, - \rangle$ is non-degenerate by
assumption. Because $L$ is a Lefschetz operator we have $(h,h')^{-i}_L =
(Lh,Lh')^{-i+2}_L$ for all $i \ge 2$ and $h, h' \in H^{-i}$. If $L$ satisfies the hard Lefschetz theorem then the primitive
decomposition is orthogonal with respect to the Lefschetz forms.

We say that $H$ is \emph{odd} (resp. \emph{even}) if
$H^\textrm{even} = 0$ (resp. $H^\textrm{odd} = 0$). Recall that a
bilinear form $(-,-)$ on a real vector space is said to be
\emph{$+1$ definite} (resp. \emph{$-1$ definite}) if $(v,v)$ is strictly positive (resp. negative) for all non-zero vectors $v$.

Let $H$ and $L$ be as above, and assume that $L$ satisfies the hard Lefschetz theorem. Assume that $H$ is either even or odd and set $j = 0$ if $H$ is even,
and $j = 1$ if $H$ is odd. We say that $H$ and $L$ \emph{satisfy the Hodge-Riemann bilinear relations} if there exists $\e \in \{\pm 1\}$ such that the restriction of
$(-,-)^{-i}_L$ to each primitive component $P_L^{-i} \subset H^{-i}$
is $\e(-1)^{(-i+j)/2}$ definite for all $i \le 0$. We fix the
ambiguity of global sign as follows: if $H$ and $L$ satisfy 
the Hodge-Riemann bilinear relations, we say that the
Hodge-Riemann bilinear relations are satisfied with the \emph{standard
  sign} if the Lefschetz form is positive definite on the lowest
non-zero degree of $H$ (which is necessarily primitive because
$L$ satisfies the hard Lefschetz theorem).

In order to avoid having to always specify if a vector space is even
or odd we will adopt the following convention: the statement that a
form on a space $P$ is $(-1)^{m/2}$-definite has the above meaning if
$m$ is even, and means that $P = 0$ if $m$ is odd.

If $H$ and $L$ satisfy the Hodge-Riemann bilinear relations then in particular each Lefschetz form $( -, -)^{-i}_L$ is non-degenerate. Moreover, its signature is easily determined from
the graded rank of $H$. (Use the fact that the primitive decomposition is orthogonal.) In fact, the Hodge-Riemann bilinear relations are equivalent to a statement about the signatures of
all Lefschetz forms.

In the sequel, we will need to consider families of Lefschetz operators (keeping $H$ and the form $\langle -, - \rangle$ fixed). It will be important to be able to
decide whether any or all members of the family are Hodge-Riemann. The following elementary lemma will provide an invaluable tool:

\begin{lem} \label{lem:family} Let $a < b$ in $\RM$ and let $\phi :
  [a,b] \to \Hom(H,H(2))$ be a continuous map (in the standard
  Euclidean topologies) such that $\phi(t)$ is a
  Lefschetz operator satisfying the hard Lefschetz theorem for all $t
  \in [a,b]$. If there exists $t_0 \in [a,b]$ such that $\phi(t_0)$ satisfies the Hodge-Riemann bilinear relations, then all $\phi(t)$ for $t \in [a,b]$ satisfy the Hodge-Riemann bilinear
relations. \end{lem}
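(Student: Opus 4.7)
The plan is to exploit the classical fact that the signature of a continuous family of non-degenerate symmetric bilinear forms on a fixed finite-dimensional real vector space is locally constant, combined with the observation that the Hodge--Riemann bilinear relations can be reformulated purely as a statement about the signatures of the Lefschetz forms on the full graded pieces $H^{-i}$ (rather than on the primitive subspaces, which vary with $t$).

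First I would fix $i \ge 0$ and consider the family of forms $(-,-)^{-i}_{\phi(t)}$ on the fixed space $H^{-i}$. By the Lefschetz operator property each of these is symmetric, and by the hard Lefschetz hypothesis at $\phi(t)$ each is non-degenerate (since $\phi(t)^i : H^{-i} \to H^{i}$ is an isomorphism and $\langle -, - \rangle$ is non-degenerate on $H^{-i} \otimes H^{i}$). Because $\phi$ is continuous, the matrix of $(-,-)^{-i}_{\phi(t)}$ in any fixed basis depends continuously on $t$, so the signature $\sigma_i(t)$ is a locally constant (hence constant) integer-valued function on the interval $[a,b]$.

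Next I would carry out the (routine) bookkeeping showing that the signatures $\{\sigma_i(t)\}_{i \ge 0}$ determine whether $\phi(t)$ satisfies the Hodge--Riemann bilinear relations, and if so with which sign. Concretely, the primitive decomposition $H^{-i} = \bigoplus_{j \ge 0} \phi(t)^{j} P^{-i-2j}_{\phi(t)}$ is orthogonal for $(-,-)^{-i}_{\phi(t)}$, and the computation
\[
\langle \phi(t)^{j} p,\; \phi(t)^{i+j} p' \rangle = \langle p,\; \phi(t)^{i+2j} p' \rangle
\]
shows that $\phi(t)^{j}$ is an isometry from $(P^{-i-2j}_{\phi(t)}, (-,-)^{-i-2j}_{\phi(t)})$ onto $(\phi(t)^{j} P^{-i-2j}_{\phi(t)}, (-,-)^{-i}_{\phi(t)})$. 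Since $\dim P^{-i}_{\phi(t)} = \dim H^{-i} - \dim H^{-i-2}$ is independent of $t$, induction on $i$ recovers the signature of each primitive form from the $\sigma_j(t)$; the condition that each primitive form be definite of the HR-prescribed sign is thus a condition purely on the integers $\sigma_i(t)$.

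Combining these two steps finishes the proof: at $t = t_0$ the relations HR hold with some sign $\varepsilon_0 \in \{\pm 1\}$, which forces specific values on the $\sigma_i(t_0)$; by constancy $\sigma_i(t) = \sigma_i(t_0)$ for all $t \in [a,b]$; and these values in turn force HR at $t$ with the same sign $\varepsilon_0$. The only real obstacle is the bookkeeping in the middle step---keeping track that ``definite of the prescribed sign on each primitive subspace'' can be read off from the signatures on the full $H^{-i}$---everything else is a direct application of the invariance of signature in continuous families of non-degenerate real symmetric forms.
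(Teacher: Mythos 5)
Your proposal is correct and is exactly the argument the paper intends: the paper's proof is the one-line remark that the signature of a continuous family of non-degenerate symmetric bilinear forms is constant, relying on the observation (made explicitly just before the lemma) that the Hodge--Riemann relations are equivalent to a statement about the signatures of the Lefschetz forms on the full graded pieces $H^{-i}$. Your middle step carefully supplies the bookkeeping that the paper leaves implicit, but the route is the same.
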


\begin{proof} This follows from the fact that the signature of a continuous family of non-degenerate symmetric bilinear forms is constant. \end{proof}

In general it is difficult to decide whether the restriction of a non-degenerate bilinear form to a subspace stays non-degenerate. However, it is obvious that the restriction of a 
definite form is non-degenerate. This basic fact plays a crucial role in this paper. The following lemma extends this observation to certain $L$-stable subspaces of $H$.

\begin{lem} \label{lem: HR subspace} Assume that $H$ and $L$
  satisfy the Hodge-Riemann bilinear relations. Let $V \subset H$
  denote an $L$-stable graded subspace such
that $\dim V^i = \dim V^{-i}$. Then $V$ and $L$ satisfy the Hodge-Riemann bilinear relations (with respect to the restriction of $\langle-,-\rangle$ to $V$). \end{lem}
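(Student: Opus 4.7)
The plan is to establish the two ingredients of the Hodge-Riemann relations for $V$ by directly inheriting them from $H$, using the $L$-stability and the dimension matching hypothesis.

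First I would verify that $L$ satisfies the hard Lefschetz theorem on $V$. Since $V$ is $L$-stable, the operator $L^i$ restricts to a linear map $V^{-i} \to V^{i}$. This restriction is injective because $L^i \colon H^{-i} \to H^{i}$ is injective by assumption. Under the hypothesis $\dim V^{-i} = \dim V^{i} < \infty$, injectivity forces $L^i \colon V^{-i} \to V^i$ to be an isomorphism, which is exactly hard Lefschetz on $V$.

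Next I would identify the primitive subspaces of $V$. By definition,
\[
P^{-i}_{L,V} := \ker\bigl(L^{i+1} \colon V^{-i} \to V^{i+2}\bigr) = V^{-i} \cap P^{-i}_L,
\]
because an element of $V^{-i}$ is killed by $L^{i+1}$ viewed in $V$ if and only if it is killed viewed in $H$. Similarly, the Lefschetz form $(v,v')^{-i}_{L,V} := \langle v, L^i v'\rangle$ on $V^{-i}$ is literally the restriction of the Lefschetz form $(-,-)^{-i}_L$ on $H^{-i}$.

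Finally, I would invoke the trivial but crucial observation that the restriction of a definite form to a subspace remains definite. By the Hodge-Riemann hypothesis on $H$, there is a global sign $\varepsilon \in \{\pm 1\}$ such that $(-,-)^{-i}_L$ is $\varepsilon(-1)^{(-i+j)/2}$-definite on $P^{-i}_L$ for every $i \geq 0$. Restricting this form to the subspace $P^{-i}_{L,V} \subset P^{-i}_L$ yields a form of the same definiteness, with the same sign $\varepsilon$. Hence $V$ together with $L$ satisfies the Hodge-Riemann bilinear relations.

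There is no real obstacle here: the only step requiring any thought is the hard Lefschetz assertion on $V$, which uses the dimension-matching hypothesis in an essential way (without it, $L^i|_V$ could be injective but not surjective). The Hodge-Riemann conclusion then follows formally from the triviality that restrictions of definite forms remain definite. Under the convention announced after the definition of Hodge-Riemann, no separate bookkeeping for parity is needed: if the even or odd part of some $V^{-i}$ happens to vanish, the corresponding primitive piece is zero and the definiteness assertion is vacuous.
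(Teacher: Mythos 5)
Your proof is correct and is exactly the argument the paper intends (its own "proof" is only a one-line sketch asking the reader to fill in precisely these steps): hard Lefschetz on $V$ from injectivity plus the matching Betti numbers, the inclusion $P^{-i}_{L,V} = V^{-i}\cap P^{-i}_L \subset P^{-i}_L$, and the fact that a restriction of a definite form stays definite with the same sign. Nothing is missing.
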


\begin{proof} (Sketch: the reader should provide a proof.) By symmetry of Betti numbers and hard Lefschetz, $V$ admits a primitive decomposition, and the result follows.
\end{proof}

The following lemma will serve as a substitute
for the weak Lefschetz theorem:

\begin{lem} \label{lem: weak lefschetz substitute} Suppose that we
  have a map of graded $\RM[L]$-modules ($\deg L = 2$) \[ \phi: V \to
  W(1) \] such that
 \begin{enumerate} \item $\phi$ is
injective in degrees $\le -1$, \item $V$ and $W$ are equipped with
graded bilinear forms $\langle -, - \rangle_V$ and $\langle -, - \rangle_W$ such that $\langle \phi(\a), \phi(\b) \rangle_W = \langle \a, L \b
\rangle_V$ for all $\a, \b \in V$, \item $W$ satisfies the Hodge-Riemann bilinear relations. \end{enumerate} Then $L^i : V^{-i} \to V^i$ is injective for $i\ge 0$. \end{lem}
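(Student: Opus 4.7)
The plan is to argue by contradiction: take a hypothetical nonzero $v \in V^{-i}$ with $L^i v = 0$, push it into $W$ via $\phi$, and use the Hodge-Riemann definiteness on $W$ to force $\phi(v)=0$, then recover a contradiction from the injectivity assumption (1).

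First I would dispose of the trivial case $i=0$, where $L^0 = \id$ is automatically injective. So assume $i \ge 1$ and fix $v \in V^{-i}$ with $L^i v = 0$. Since $\phi$ shifts degree by $+1$, the element $\phi(v)$ lives in $W^{-(i-1)}$. Because $\phi$ is a map of $\RM[L]$-modules, we have $L^i \phi(v) = \phi(L^i v) = 0$, which by definition means $\phi(v) \in P_L^{-(i-1)} \subset W^{-(i-1)}$, the primitive subspace in degree $-(i-1)$.

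Now I would compute the value of the Lefschetz form on $\phi(v)$. Using hypothesis (2),
\[
(\phi(v),\phi(v))^{-(i-1)}_L = \langle \phi(v),\, L^{i-1}\phi(v)\rangle_W = \langle v,\, L \cdot L^{i-1} v\rangle_V = \langle v, L^i v\rangle_V = 0.
\]
By hypothesis (3), $W$ satisfies the Hodge-Riemann bilinear relations, so the Lefschetz form $(-,-)^{-(i-1)}_L$ is (up to an overall sign) definite on the primitive subspace $P_L^{-(i-1)}$. Combined with the vanishing above, this forces $\phi(v) = 0$. Finally, since $-i \le -1$, hypothesis (1) says $\phi$ is injective on $V^{-i}$, so $v=0$, as desired.

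The argument is essentially a one-line application of Hodge-Riemann once everything is set up; there is no real obstacle, only the small bookkeeping point that one must land in the correct primitive degree after the grading shift of $\phi$. The philosophical content, as emphasized in \S\ref{sec:outline}, is that $W$ with its Hodge-Riemann property plays the role that the cohomology of an ambient smooth variety plays in de Cataldo-Migliorini, and $\phi$ plays the role of a Gysin-type map; the injectivity of $L^i$ on $V^{-i}$ is then a pure consequence of ``restriction of a definite form is definite.''
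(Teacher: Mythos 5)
Your proof is correct and is essentially the paper's own argument, merely phrased as a proof by contradiction (assume $L^i v=0$, conclude $\phi(v)$ is primitive and isotropic, hence zero) rather than the paper's direct two-case version (either $L^i\phi(\a)\ne 0$ or $\phi(\a)$ is primitive with nonzero self-pairing). The key computation $\langle\phi(v),L^{i-1}\phi(v)\rangle_W=\langle v,L^i v\rangle_V$ and the appeal to definiteness on primitives are identical.
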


\begin{proof} For $i=0$ the statement is vacuous. Choose $0 \ne \a \in V^{-i}$ with $i \ge 1$, and consider $0 \ne \phi(\a) \in W^{-i+1}$. If $0 \ne L^i \phi(\a) = \phi(L^i \a)$ then
$L^i \a \ne 0$. Alternatively, if $L^i \phi(\a) = 0$ then $\phi(\a)$
is primitive. Hence \[ (\phi(\a), \phi(\a))^{-i+1}_L = \langle \phi(\a), L^{i-1} \phi(\a) \rangle_W = \langle \a,
L^i \a \rangle_V \] is either strictly negative or positive by the Hodge-Riemann bilinear relations. In any case $L^i \a \ne 0$. Hence $L^i : V^{-i} \to V^i$ is injective as claimed.
\end{proof}

When $\dim(V^{-i}) = \dim(V^i)$ for all $i$, this lemma implies the hard Lefschetz theorem for $V$.

\begin{remark} \label{remark: standard sign} Suppose we are in the situation of the above lemma, that $-\ell$ is the lowest degree of $W$, and that $-(\ell+1)$ is the lowest degree
of $V$. The above proof indicates that $(-,-)^{-(\ell+1)}_L$ is $\pm$
definite on $V^{-(\ell+1)}$, with the same sign as on $W^{-\ell}$. In
particular, if $V$ also satisfies the Hodge-Riemann
bilinear relations, then $V$ has the standard sign if and only if $W$
has the standard sign. \end{remark}

Finally, we will need the following lemma in
\S\ref{sec:RHR}. Let $H$, $\langle -, - \rangle$ and $L$ be as in the
first two paragraphs of this section, except that we no longer assume
that $\langle -, - \rangle$ is non-degenerate. Suppose that there
exists $d \in \ZM$ such that $L^i : H^{-d-i} \to H^{-d+i}$ is an
isomorphism, for all $i \ge 0$ (so $L$ satisfies the hard Lefschetz theorem
if and only if $d = 0$).

\begin{lem}\label{lem:shiftedvanishing}
If $d > 0$ then the Lefschetz form $(h, h')_L^{-i} :=
  \langle h, L^i h \rangle$ on $H^{-i}$ for $i \ge 0$
is zero. \end{lem}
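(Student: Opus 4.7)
The plan is to use a shifted primitive decomposition of $H$ and exploit Lefschetz adjointness $\langle La, b \rangle = \langle a, Lb \rangle$ to slide powers of $L$ across the form.

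First I would construct a primitive decomposition adapted to the shifted hard Lefschetz hypothesis. Setting $P^{-d-j} := \ker\bigl(L^{j+1} \colon H^{-d-j} \to H^{-d+j+2}\bigr)$ for $j \ge 0$, the standard two-term splitting $H^{-d-j} = P^{-d-j} \oplus L(H^{-d-j-2})$ goes through: it needs only the hypothesized isomorphism $L^{j+2} \colon H^{-d-j-2} \simto H^{-d+j+2}$ and does not use non-degeneracy of $\langle -, - \rangle$. Iterating in $j$ and translating by $L^k$ yields
\[
H = \bigoplus_{j \ge 0} \bigoplus_{k=0}^{j} L^k P^{-d-j},
\]
so every element of $H^{-i}$ is a sum of terms $L^k p$ with $p \in P^{-d-j}$ primitive and degree condition $k = (d+j-i)/2$.

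By bilinearity it suffices to show that $\langle L^{k_1} p_1, L^{k_2+i} p_2 \rangle$ vanishes for any two primitives $p_\ell \in P^{-d-j_\ell}$ with $L^{k_\ell} p_\ell \in H^{-i}$. Sliding all $L$'s onto either side and writing $N := k_1 + k_2 + i$,
\[
\langle L^{k_1} p_1, L^{k_2+i} p_2 \rangle \;=\; \langle p_1, L^N p_2 \rangle \;=\; \langle L^N p_1, p_2 \rangle,
\]
so primitivity forces the pairing to vanish unless $N \le j_1$ \emph{and} $N \le j_2$ simultaneously. Substituting the degree formula gives $N = d + (j_1+j_2)/2$, and the two inequalities become $d \le (j_1-j_2)/2$ and $d \le (j_2-j_1)/2$. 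Summing yields $2d \le 0$, contradicting $d > 0$.

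The genuine content of the argument is the two-sided primitivity bound: $p_1$ constrains how many $L$'s can sit on the left and $p_2$ constrains how many can sit on the right, and these constraints are incompatible as soon as there is a non-trivial shift $d > 0$. The only step requiring care is confirming the shifted primitive decomposition in the possibly degenerate setting, but this is a routine adaptation of the standard proof, so I do not anticipate a substantive obstacle beyond identifying the right approach.
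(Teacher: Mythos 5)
Your proof is correct and follows essentially the same route as the paper: the same shifted primitive decomposition $H^m=\bigoplus_j L^jQ_L^{m-2j}$, followed by sliding all powers of $L$ to one side and using primitivity to kill the pairing. The only cosmetic difference is that you invoke primitivity of both arguments and sum two inequalities, whereas the paper orders $j\ge k$ and uses primitivity of one argument; your explicit check that the splitting needs no non-degeneracy is a worthwhile detail the paper leaves implicit.
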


\begin{proof} For $i \ge 0$ consider the ``shifted primitive spaces'' 
\[
Q_L^{-d-i} := \ker L^{i+1} \subset H^{-d-i}
\]
and set $Q_L^{-d-i} := 0$ if $i < 0$. Then our assumptions on $L$
guarantee that we have a ``shifted primitive decomposition''
\[
H^m = \bigoplus_{j \ge 0} L^j Q_L^{m-2j}.
\]

Fix a degree $m \le 0$ and fix $x \in Q_L^{m-2j}$ and $y \in Q_L^{m-2k}$ for some $j \ge k \ge 0$, so that $L^j x$ and $L^k y$ are in degree $m$. Then
\[
(L^jx, L^ky)_L^{m} = \langle x, L^{j+k-m}y \rangle = 0.
\]
This follows because $y \in \ker L^{2k-d-m+1}$ and $2k-d-m+1 \le j+k-m$, thanks to the assumption $d > 0$.
\end{proof}


\section{The Hecke algebra and Soergel bimodules} \label{sec:background}

\subsection{Coxeter systems} \label{sec:cox}

Fix a Coxeter system $(W,S)$ and for simple reflections $s, t \in S$
denote by $m_{st} \in \{ 2, 3, \dots, \infty \}$ the order of $st$. We
denote the length function on $W$ by $\ell$ and the Bruhat order by
$\le$.

An \emph{expression} is a word $\un{z} =
s_1s_2\cdots s_m$ in $S$. An expression will always be denoted by an underlined roman letter. Omitting the underline will denote the product in the Coxeter group. An expression $\un{z} =
s_1s_2 \cdots s_m$ is \emph{reduced} if $m = \ell(z)$.

Let us fix a finite dimensional real vector space $\hg$ together with linearly independent subsets $\{ \a_s \}_{s \in S} \subset \hg^*$ and $\{ \a_s^\vee \}_{s \in S} \subset \hg$ such
that
\[ \a_s(\alpha_t^\vee) = -2 \cos(\pi/m_{st}) \quad \text{for all $s, t
  \in S$.} \]
In addition, we assume that $\hg$ is of minimal
  dimension with these properties.
The group $W$ acts on $\hg$ by $s \cdot v = v -
\alpha_s(v)\alpha_s^\vee$. This action is \emph{reflection faithful}
in the sense of \cite[Definition 1.5]{S3} (see \cite[Proposition 2.1]{S3}).

\begin{remark} We have assumed that the representation $\hg$ is
  reflection faithful so that the theory of \cite{S3} is available. It
  was shown by Libedinsky \cite{Li} that Soergel's conjecture for
  $\hg$ is equivalent to Soergel's conjecture for the geometric
  representation. We discuss the choice of representation in detail in
  \cite{EW2} where we give alternative proofs of the results of
  \cite{S3} which are valid when $\hg$ is any ``realization'' of $W$.
\end{remark}

Let $R$ be the coordinate ring of $\hg$, graded so that its
linear terms $\hg^*$ have degree $2$. We denote by $R^+$ the ideal
of elements of positive degree.
Clearly $W$ acts on $R$. For $s \in S$ we write $R^s$ for the
subring of invariants under $s$.

Because the vectors $\{ \alpha_s^\vee \}_{s \in S}$ are linearly
independent the intersection of the open half spaces
\[ \bigcap_{s \in S}
\{ v \in \hg^* \; | \; v(\alpha_s^\vee) >
0\} \subset \hg^*\]is non-empty.
We fix once and for all an
element $\rho \in \hg^*$ in this intersection. That is, we fix $\rho$
such that $\rho(\alpha_s^\vee) >
0$ for all $ s\in S$.
The following positivity property of the representation $\hg$
plays an important role below (see \cite[V.4.3]{Bo} or \cite[Lemma
5.13]{Hu} and the proof of \cite[Proposition 2.1]{S3}):
\begin{equation} \label{eq:pos}
(w\rho)(\alpha_s^{\vee}) > 0 \Leftrightarrow sw > w.
\end{equation}


\subsection{The Hecke algebra} \label{sec:hecke}
References for this section are \cite{KL1} and \cite{SoKL}.
Recall that the Hecke algebra $\HC$ is the algebra with free $\ZM[v^{\pm 1}]$-basis given by symbols $\{ H_x \; | \; x \in W \}$ with multiplication determined by \[ H_xH_s :=
\begin{cases} H_{xs} & \text{if $xs > x$,} \\ (v^{-1} - v)H_x + H_{xs} & \text{if $xs < x$.} \end{cases} \] Given $p \in \ZM[v^{\pm 1}]$ we write $\ov{p(v)} := p(v^{-1})$. We can
extend this to an involution of $\HC$ by setting $\ov{H_x} = H^{-1}_{x^{-1}}$. Denote the Kazhdan-Lusztig basis of $\HC$ by $\{ \un{H}_x \; | \; x \in W \}$. It is characterised by
the two conditions
 \begin{enumerate} \item[i)] $\ov{\un{H}_x} = \un{H}_x$ \item[ii)] $\un{H}_x \in H_x + \sum_{y < x} v\ZM [v]H_y$ \end{enumerate}
for all $x \in W$. For example, if $s
\in S$ then $\un{H}_s = H_s + vH_{\id}$.

\begin{remark} \label{rem:oldKL}
  In the notation of \cite{KL1} we have $v = q^{-1/2}$, $H_x =
  v^{\ell(x)}T_x$ and $\un{H}_x = C_x^\prime$.  If we write $\un{H}_x = \sum h_{y,x}H_y$ then
  $v^{\ell(x)-\ell(y)}P_{y,x}(v^{-2}) = h_{y,x}$.
\end{remark}

Consider the $\ZM[v,v^{-1}]$-linear trace $\e : \HC \to \ZM[v^{\pm
  1}]$ given by $\e(H_w)= \delta_{\id,w}$. Define a bilinear form 
\begin{align*}
(-,-): \HC \times \HC & \to \ZM[v^{\pm 1}] \\
(h,h') & \mapsto\e(a(h)h')
\end{align*}
where $a$ is the anti-involution of $\HC$ determined by $a(v) = v$ and
$a(H_x) = H_{x^{-1}}$. One checks easily that
\begin{enumerate}
\item[i)] $(ph, qh') = {p}q(h,h')$ for all $p, q \in
  \ZM[v^{\pm 1}]$ and $h,h' \in \HC$,
\item[ii)] $(h\un{H}_s, h') = (h, h'\un{H}_s)$, $(\un{H}_sh, h') = (h,
  \un{H}_sh')$ for all $h, h' \in \HC$ and $s \in S$.
\end{enumerate}
A straightforward induction shows $(H_x,
H_y) =  \delta_{xy}$, which we could have used as the
definition of $(-,-)$.

An important property of this pairing (used repeatedly below) is that $(\un{H}_x,\un{H}_y) \in v \ZM[v]$ when $x \ne y$, and $(\un{H}_x,\un{H}_x) \in 1 + v \ZM[v]$.

\begin{remark} This is not the form used in \cite{EW2}, which is more natural when one only considers Soergel bimodules. In this paper we also consider $\Delta$- and $\nabla$-filtered
bimodules, for which the above form is more convenient. \end{remark}

\subsection{Bimodules}

We work in the abelian category of finitely generated graded $R$-bimodules. All morphisms
preserve the grading (i.e. are homogeneous of degree 0). Given a graded $R$-bimodule $B = \bigoplus_{i \in \ZM} B^i$, we denote by $B(1)$ the shifted bimodule: $B(1)^i =
B^{i+1}$. We write $\Hom(-,-)$ for degree zero morphisms between
bimodules (the morphisms in our category). For any two bimodules $M$ and $N$ set \[ \Hom^{\bullet}(M,N) = \bigoplus_{i \in \ZM}\Hom(M, N(i)). \] Given a
polynomial $p = \sum_{i \in \ZM} a_i v^i \in \ZM_{\ge 0}[v^{\pm 1}]$,
we let $B^{\oplus p}$ denote the bimodule $\oplus_{i \in \ZM}
B(i)^{\oplus a_i}$. Given bimodules $B$ and $B'$ we write $B' \summand
B$ to mean that $B'$ is a direct summand of $B$.  Throughout ``embedding'' means
``embedding as a direct summand''.

The category of $R$-bimodules is a monoidal category under tensor
product. Given $R$-bimodules $B$ and $B'$ we denote their tensor product by
juxtaposition: $BB' := B \otimes_R B$.

Throughout this paper, we have arbitrarily chosen the right action to be special for many constructions. For instance, for a bimodule $B$ we will often consider $\ov{B} = B \ot_R \RM$;
here $\RM = R/R^+$ is the $R$-module where all positive degree polynomials vanish.

We define the dual of an $R$-bimodule $B$ by $ \DM B :=
\Hom^\bullet_{-R}(B, R)$. Here, $\Hom^\bullet_{-R}(-,-)$ denotes
homomorphisms of all degrees between right $R$-modules. We make $\DM B$ 
into an $R$-bimodule via $(r_1fr_2)(b) = f(r_1br_2)$ (where $f \in
\DM B$, $r_1, r_2 \in R$ and $b \in B$). Suppose that $B$ is
finitely generated and graded free as a right $R$-module, so that $B
\cong R^{\oplus p}$ as a
right $R$-module (for some $p \in \ZM_{\ge 0}[v^{\pm 1}]$). Then $\DM B \cong R^{\oplus
  \overline{p}}$. In particular, if $B \cong \DM B$ then $\overline{B}
\cong \overline{B}^*$ as graded $\RM$-vector spaces and $\dim
(\ov{B})^{-i} = \dim (\ov{B})^{+i}$.

We say that an $R$-valued form $\langle -, - \rangle_B$ on a graded
$R$-bimodule $B$ is \emph{graded} if $\deg \langle b, b' \rangle_B =
\deg b + \deg b'$ for homogeneous $b,b'$. A form $\langle -, -
\rangle_B$ is \emph{invariant} if it is graded and $\langle r b, b' \rangle = \langle b, rb' \rangle$ and $\langle br, b' \rangle = \langle b, b'r \rangle = \langle b, b'
\rangle r$ for all $b, b' \in B$ and $r \in R$. (Note the left/right asymmetry).
The space of invariant forms is
isomorphic to the space of $R$-bimodule maps $B \to \DM B$.
We say that an invariant form $\langle -, - \rangle_B$ on a bimodule $B$
is \emph{non-degenerate} if it induces an isomorphism $B \to \DM
B$. This is stronger than assuming non-degeneracy in the usual sense ($\langle b, b'
\rangle = 0$ for all $b' \in B$ implies $b = 0$). An invariant
form $\langle -, - \rangle_B$ on $B$ induces a form $\langle - , -
\rangle_{\overline{B}}$ on $\overline{B}$ by defining $\langle f, g
\rangle_{\overline{B}}$ to be the image of $\langle f, g \rangle_B$ in
$\RM = R/R^+$.

Suppose that $B$ is free of finite rank as a right $R$-module (as will be the case
for all bimodules considered below). Then  an invariant form
$\langle -, - \rangle_B$ is non-degenerate if and only if $\langle -,
- \rangle_{\overline{B}}$ gives a graded (in the sense of \S\ref{sec:lla}) non-degenerate form on the graded
vector space $\overline{B}$, as follows from the graded Nakayama lemma.

\subsection{Bott-Samelson bimodules} \label{sec:BSbimodules}

For any simple reflection $s \in S$ set $B_s := R \ot_{R^s}
R(1)$. It is an $R$-bimodule with respect to left and right
multiplication by $R$.
Consider the elements \[ c_{\id} := 1 \ot 1 \in B_s, \quad
\quad c_s := \frac{1}{2}(\a_s \ot 1 + 1 \ot \a_s) \in B_s
\] 
of degrees $-1$ and $1$ respectively. Then $\{ c_\id, c_s \}$ gives a basis for $B_s$ as a right (or left) $R$-module, and one has the relations \begin{gather} \label{eq:invariance} r \cdot c_s = c_s \cdot r \\
\label{eq:demazure} r \cdot c_{\id} = c_{\id} \cdot sr + \partial_s(r) \cdot c_s \end{gather} for all $r \in R$. Here, $\partial_s$ is the \emph{Demazure operator}, given by \[
\partial_s(r) = \frac{r - sr}{\a_s} \in R. \]
For any expression $\un{x} = st\cdots u$ we denote by $BS(\un{x})$ the corresponding Bott-Samelson bimodule: \[ BS(\un{x}) := B_sB_t\cdots
B_u = B_s \ot_R B_t \ot_R \cdots \ot_R B_u. \] Given elements $b_s \in B_s$, $b_t \in B_t$, \dots, $b_u \in B_u$ we denote the corresponding tensor simply by juxtaposition $b_sb_t \cdots
b_u := b_s \ot b_t \ot \cdots \ot b_u.$ For any subexpression $\un{\e}$ of $\un{x}$ (that is, $\un{\e} = \e_s \e_t \cdots \e_u$ with $\e_v \in \{ \id ,v \}$ for all $v \in S$) we can
consider the element \[ c_{\un{\e}} := c_{\e_s} c_{\e_t} \cdots c_{\e_u} \in BS(\un{x}). \] One may check that the set $\{ c_{\un{\e}} \}$ gives a basis for $BS(\un{x})$ as a right (or
left) $R$-module, as $\un{\e}$ runs over all subexpressions of $\un{x}$.

In the following the element $c_\top := c_s c_t \cdots c_u \in BS(\un{x})$ will play an important role. Given $b \in BS(\un{x})$ we define $\Tr(b) \in R$ to be the coefficient of $c_\top$
when $b$ is expressed in the basis $\{ c_{\un{\e}} \}$ of $BS(\un{x})$ as a right $R$-module.

Clearly, $BS(\un{x}) \cong (R \ot_{R^s} R \ot_{R^t} \cdots
\ot_{R^u} R) (d)$ where $d$ is the length of the expression $\un{x}$. It
follows that 
$BS(\un{x})(-d)$ is a commutative ring, with term-wise multiplication. For example, $(f \ot g) \cdot (f' \ot g') = ff' \ot gg'$ gives the multiplication on $B_s(-1) = R \ot_{R^s} R$. Let
us observe the following multiplication rules in $B_s$: \begin{gather} c_{\id} \cdot c_{\id} = c_{\id} \\ c_{\id} \cdot c_s = c_s \\ c_s \cdot c_s = c_s \a_s. \end{gather} We
define an invariant symmetric form $\langle -, - \rangle_{BS(\un{x})}$ on $BS(\un{x})$ via \[ \langle b, b' \rangle := \Tr(b \cdot b'). \] We call $\langle -, -
\rangle_{BS(\un{x})}$ the \emph{intersection form} on
$BS(\un{x})$. It induces a symmetric $\RM$-valued form $\langle -, -
\rangle_{\overline{BS(\un{x})}}$ on $\ov{BS(\un{x})}$. If we write
$\Tr_\RM$ for the composition of $\Tr$ with the quotient map $R \to
\RM = R/R^+$ then we have
\[
\langle b, b' \rangle_{\ov{BS(\un{x})}} = \Tr_\RM(b \cdot b').
\]
for all $b, b' \in \ov{BS(\un{x})}$. Because $R \ot_{R^s} R \ot_{R^t}
\cdots \ot_{R^u} R$ is a commutative 
ring, multiplication by any degree $2$ element $z$ of this ring 
gives a Lefschetz operator on $BS(\un{x})$. In other words, $\langle
z\alpha,\beta \rangle = \langle \a,z\b \rangle$ for any $\a, \b \in BS(\un{x})$.

For $s \in S$ let $\mu : B_s \to R(1) : f\ot g \mapsto fg$ denote
the multiplication map. For $1 \le i \le m$ set $x_{\widehat{i}} = s_1 \cdots \widehat{s_i}
\cdots s_m$ ($\widehat{s_i}$ denotes omission). Consider the canonical maps 
\begin{align*}
\Br_i : & BS(\un{x}) \to BS(\un{x})(2) : b_1 \cdots b_i \cdots b_m
\mapsto b_1 \cdots (b_ic_{s_i}) \cdots b_m \\
\phi_i : & BS(\un{x}) \to BS(\un{x}_{\widehat{i}})(1) : b_1 \cdots b_i \cdots
b_m \mapsto b_1 \cdots \mu(b_i) \cdots b_m \\
\chi_i : & BS(\un{x}_{\widehat{i}}) \to BS(\un{x})(1) : b_1 \cdots
b_{i-1}b_{i+1} \cdots b_m \mapsto b_1 \cdots b_{i-1}c_{s_i}b_{i+1} \cdots b_m.
\end{align*}
By \eqref{eq:invariance} we have $\Br_i = \chi_i \circ \phi_i$.

\begin{lem} \label{lem: breaking lefschetz}
As endomorphisms of $BS(\un{x})$ we have:
\[
\rho \cdot (-) = \sum_{i = 1}^m (s_{i-1} \cdots s_1 \rho
)(\a_{s_i}^\vee)  \chi_i \circ \phi_i + (-) \cdot x^{-1}\rho
\]
Here $\rho \cdot (-)$ denotes left multiplication by $\rho$ and $(-) \cdot x^{-1}\rho$ denotes right
multiplication by $x^{-1}\rho$.
\end{lem}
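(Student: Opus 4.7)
The plan is to prove the identity by induction on the length $m$ of the expression $\un{x}$. The base case $m=0$ is vacuous: $BS(\varnothing) = R$, $x = \id$, the sum on the right is empty, and both sides agree with multiplication by $\rho$ on $R$. The essential base case is $m=1$, i.e.\ the identity on $B_s$:
\[
\rho\cdot(-) \;=\; \rho(\a_s^\vee)\,\chi_1\circ\phi_1 \;+\; (-)\cdot s\rho.
\]
To check this I would evaluate both sides on the right-module basis $\{c_\id, c_s\}$ using the two defining relations \eqref{eq:invariance} and \eqref{eq:demazure}. On $c_s$ both sides reduce to $c_s\rho$ after using $s\rho + \rho(\a_s^\vee)\a_s = \rho$ and $\mu(c_s) = \a_s$. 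On $c_\id$ the relation \eqref{eq:demazure} gives $\rho\cdot c_\id = c_\id\cdot s\rho + \partial_s(\rho)c_s$, and the identification $\partial_s(\rho) = \rho(\a_s^\vee)$ (immediate from $s\rho = \rho - \rho(\a_s^\vee)\a_s$) matches the $\chi_1\circ\phi_1$ term since $\mu(c_\id) = 1$.

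For the inductive step, write $\un{x} = \un{x}'\,s_m$ with $\un{x}' = s_1\cdots s_{m-1}$ and $x' = s_1\cdots s_{m-1}$, so that $BS(\un{x}) = BS(\un{x}')\otimes_R B_{s_m}$. Since left multiplication by $\rho$ acts only on the leftmost factor, for any element $b'\otimes b_m$ we have $\rho\cdot(b'\otimes b_m) = (\rho b')\otimes b_m$. By induction applied to $\un{x}'$,
\[
\rho b' \;=\; \sum_{i=1}^{m-1}(s_{i-1}\cdots s_1\rho)(\a_{s_i}^\vee)\,(\chi_i'\circ\phi_i')(b') \;+\; b'\cdot (x')^{-1}\rho,
\]
where $\chi_i'$, $\phi_i'$ are the analogous maps for $BS(\un{x}')$. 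Tensoring with $b_m$, the first sum becomes the $i = 1,\dots,m-1$ contributions of the desired formula (since $\chi_i'\circ\phi_i'$ tensored with the identity on $B_{s_m}$ equals $\chi_i\circ\phi_i$ on $BS(\un{x})$). Then I slide $(x')^{-1}\rho$ across the tensor and apply the base case to $(x')^{-1}\rho\cdot b_m$ in $B_{s_m}$. This produces exactly one more summand of the form $c\,(\chi_m\circ\phi_m)(b'\otimes b_m)$ plus the term $(b'\otimes b_m)\cdot s_m(x')^{-1}\rho$. The right multiplication collapses to $(-)\cdot x^{-1}\rho$ since $s_m(x')^{-1} = x^{-1}$.

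The only subtlety is that the resulting $i = m$ coefficient is $c = ((x')^{-1}\rho)(\a_{s_m}^\vee)$, and I need to identify this with $(s_{m-1}\cdots s_1\rho)(\a_{s_m}^\vee)$. This is a direct application of the duality $(wu)(v) = u(w^{-1}v)$ between the $W$-actions on $\hg^*$ and $\hg$: both sides equal $\rho(x'\a_{s_m}^\vee)$. With this identification the inductive step is complete.

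The main obstacle is purely bookkeeping: correctly commuting polynomial coefficients past tensor factors taken over $R$ and tracking the $W$-action on the coefficients as one moves from one bimodule slot to the next. There is no conceptual difficulty beyond the base case computation in $B_s$.
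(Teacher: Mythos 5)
Your proof is correct and is essentially the paper's argument: the paper simply declares the lemma ``an immediate consequence of \eqref{eq:demazure}'', and your induction on $m$ (with the $m=1$ computation $\rho\cdot c_{\id}=c_{\id}\cdot s\rho+\rho(\a_s^\vee)c_s$ and $\rho\cdot c_s=c_s\cdot\rho$, then sliding the residual right multiplication across the tensor factors) is precisely the intended iteration of that relation. The only cosmetic remark is that your final ``subtlety'' is vacuous, since $(x')^{-1}=s_{m-1}\cdots s_1$ literally, so the two coefficients are the same expression with no duality needed.
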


\begin{proof} This is an immediate consequence of \eqref{eq:demazure}.
\end{proof}

\subsection{Soergel bimodules} \label{sec:SB}

By definition, a \emph{Soergel bimodule} is an object in the additive
Karoubian subcategory $\BC$ of graded $R$-bimodules generated by Bott-Samelson bimodules and their shifts. In other words,
indecomposable Soergel bimodules are the indecomposable $R$-bimodule
summands of Bott-Samelson bimodules (up to shift).

It is a theorem of Soergel \cite{S3} that, given any reduced
expression $\un{x}$ for $x \in W$ there is a unique (up to
isomorphism) indecomposable summand $B_x$ of $BS(\un{x})$ which does
not occur as a direct summand of $BS(\un{y})$ for any expression
$\un{y}$ of length less than $\ell(x)$. Moreover, $B_x$ does not
depend (up to isomorphism) on the choice of reduced expression
$\un{x}$. The bimodules $B_x$ for $x \in W$ give representatives for
the isomorphism classes of indecomposable Soergel bimodules, up to
shift.

Denote by $[\BC]$ the split Grothendieck group of $\BC$. That is,
$[\BC]$ is the abelian group generated by symbols $[B]$ for all
objects $B \in \BC$ subject to the relations $[B] = [B'] + [B'']$
whenever $B \cong B' \oplus B''$ in $\BC$. We make $[\BC]$ into a
$\ZM[v^{\pm 1}]$-module via $p [M] := [M^{\oplus p}]$ for $p
\in \ZM_{\ge 0}[v^{\pm 1}]$ and $M \in \BC$. Because $\BC$ is monoidal, $[\BC]$ is a
$\ZM[v^{\pm 1}]$-algebra. The above results imply that $[\BC]$ is
free as a $\ZM[v^{\pm 1}]$-module, with basis $\{ [B_x] \; | \; x \in W
\}$. In fact one has \cite[Theorem 1.10]{S3}:

\begin{thm}[Soergel's categorification theorem] \label{thm: Soergel
    Catfn Thm} There is an isomorphism of $\ZM[v^{\pm 1}]$-algebras
\[ \HC \simto  [\BC]
\]
fixed by $\un{H}_s  \mapsto [B_s]$.\end{thm}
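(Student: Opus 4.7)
The plan is to construct the isomorphism via a character map in the reverse direction, and then invert. First I would define a character $\ch \colon \BC \to \HC$ using Soergel's standard ($\Delta$-) filtration: for each Soergel bimodule $B$, restriction to graph-supported sections on the Bruhat stratification of $\hg \times \hg$ gives a canonical filtration whose subquotients are shifted standard bimodules $R_x$ supported on graphs of $x \in W$. Setting $\ch(B) := \sum_{x \in W} (\gr \rk_R \text{gr}_x B) H_x \in \HC$ produces a well-defined element because each subquotient is graded free over $R$ on one side. I would then verify two compatibility properties: $\ch$ is additive on direct sums (hence descends to $[\BC]$), and $\ch$ is multiplicative under tensor product, which reduces to showing $R_x \otimes_R R_y$ has a filtration with subquotients reflecting the product $H_x H_y$ in the Hecke algebra.

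Next I would compute $\ch$ on generators. A direct calculation with the basis $\{c_{\id}, c_s\}$ of $B_s$ shows $B_s$ has a two-step $\Delta$-filtration with subquotients $R_s(-1)$ and $R_{\id}(1)$, whence $\ch(B_s) = H_s + v H_{\id} = \un{H}_s$. Multiplicativity then yields $\ch(BS(\un{x})) = \un{H}_{s_1} \cdots \un{H}_{s_m}$ for any expression $\un{x} = s_1 \cdots s_m$. Because $\{\un{H}_y \mid y \in W\}$ is a basis of $\HC$ and products of $\un{H}_{s_i}$ span, the image of $\ch$ is all of $\HC$.

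For injectivity and for pinning down the indecomposables, I would induct on the Bruhat order. Using the standard expansion $\un{H}_{s_1} \cdots \un{H}_{s_m} \in H_x + \sum_{y < x} \ZM[v^{\pm 1}] H_y$ for a reduced expression $\un{x}$, the character $\ch(BS(\un{x}))$ has leading term $H_x$. By Krull--Schmidt there is a unique indecomposable summand $B_x \summand BS(\un{x})$ whose character has nonzero $H_x$-coefficient, and any other summand has character supported on strictly smaller elements; this gives both the existence and uniqueness of $B_x$, and shows that the isomorphism class of $B_x$ is independent of the reduced expression. The characters $\{\ch(B_x)\}_{x \in W}$ are then unitriangular in the $\{H_x\}$-basis, so they are a $\ZM[v^{\pm 1}]$-basis of $\HC$; consequently $\{[B_x]\}$ is a $\ZM[v^{\pm 1}]$-basis of $[\BC]$, $\ch$ is injective, and the inverse map $\un{H}_s \mapsto [B_s]$ is a well-defined $\ZM[v^{\pm 1}]$-algebra isomorphism.

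The hard part will be establishing the $\Delta$-filtration and, most importantly, proving that $\ch$ is multiplicative. Multiplicativity amounts to a careful analysis of the tensor product $R_x \otimes_R B_s$: one must show it has a $\Delta$-filtration whose graded pieces correspond to $H_x \un{H}_s$. This splits into the cases $xs > x$ (giving $R_{xs}(-1) \oplus R_x$ with subquotients matching $H_{xs} + v H_x$ after multiplying by $\un{H}_s = H_s + vH_\id$... wait, one compares to $H_x \un{H}_s = H_{xs} + vH_x$) and $xs < x$, where one gets a short exact sequence reflecting $H_x \un{H}_s = H_{xs} + v^{-1} H_x$ plus the correction from $(v^{-1}-v)$; handling the descending case requires genuine input from the reflection-faithfulness of $\hg$ to control extensions. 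Everything else is then a clean consequence of Krull--Schmidt plus the standard Kazhdan--Lusztig triangularity.
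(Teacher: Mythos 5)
The paper does not actually prove this theorem: it is imported verbatim from Soergel \cite[Theorem 1.10]{S3}, and the surrounding text only recalls the inverse isomorphism via the character $\ch = \ch_\Delta$. Your proposal is an accurate reconstruction of Soergel's own proof from that source --- $\Delta$-filtrations by support, multiplicativity of $\ch$ reduced to the analysis of $R_x \otimes_R B_s$ in the two cases $xs \gtrless x$, the computation $\ch(B_s) = \un{H}_s$, and the Krull--Schmidt plus triangularity classification of the $B_x$ --- so it is essentially the same approach as the one the paper relies on.
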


We now describe Soergel's construction of an inverse to the
isomorphism $\HC \simto [\BC]$. To do this it is natural
to consider certain filtrations ``by support'' (see \cite[\S3 and \S5]{S3}). For $x\in W$ consider
the linear subspace (or ``twisted graph'')
\[
\Gr(x)=\{(xv,v)\,\vert\, v\in \hg\}\subseteq
\hg\times \hg
\]
which we view as a subvariety in $\hg \times \hg$. For any subset $A$
of $W$ consider the corresponding union
\[\Gr(A)=\bigcup_{x\in A}\Gr(x)\subseteq \hg\times \hg.\]
Let us identify $R \otimes_\RM R$ with the regular functions on
$\hg \times \hg$. Any $R$-bimodule can be viewed as an $R
\otimes_{\RM} R$-module (because $R$ is commutative) and hence as a
quasi-coherent sheaf on $\hg \times \hg$. For example, one may check
that the bimodule $R_x$ corresponding to the structure sheaf on
$\Gr(x)$ has the following simple description: $R_x
\cong R$ as a left module, and the right action is twisted by $x$: $m
\cdot r = m(xr)$ for $m \in R_x$ and $r \in R$.

Given any subset $A \subseteq W$ and $R$-bimodule $M$ we define
\[
\Gamma_A M := \{ m \in M \; | \; \supp m \subseteq \Gr(A) \}
\]
to be the subbimodule consisting of elements whose support is contained in $\Gr(A)$. 
Given $x \in W$ we will abuse notation and write $\le x$ for the set
$\{ y \in W \; | \; y \le x \}$ and similarly for $<x$, $\ge x$ and $>
x$. With this notation, we obtain functors $\Gamma_{\le x}$, $\Gamma_{< x}$,
$\Gamma_{\ge x}$ and $\Gamma_{> x}$. For example $\Gamma_{\le x} =
\Gamma_{\{ y \in W \; | \; y \le x \}}$.

For any $x \in W$ define  $\Delta_x := R_x(-\ell(x))$ and  $\nabla_x
:= R_x(\ell(x))$.
Given a finitely generated $R$-bimodule $M$ we say that $M$ \emph{has
  a $\Delta$-filtration} (resp. \emph{has a $\nabla$-filtration}) if
$M$ is supported on $\Gr_A$ for some finite subset $A \subset W$ and,
for all $x \in W$ we have isomorphisms
\[
\Gamma_{\ge x} M / \Gamma_{> x} M \cong \Delta_x^{\oplus
  h^{\Delta}_x(M)} 
\qquad ( \text{resp.} \quad \Gamma_{\le x} M / \Gamma_{< x} M \cong
\nabla_x^{\oplus  h^{\nabla}_x(M)} \; )
\]
for some polynomials $h^{\Delta}_x(M)$ (resp. $h^{\nabla}_x(M)$) in
$\ZM_{\ge 0}[v^{\pm 1}]$. If $M$ has a $\Delta$-filtration
(resp. $\nabla$-filtration) we define its \emph{$\Delta$-character}
(resp. \emph{$\nabla$-character}) in the Hecke algebra via
\[
\ch_\Delta M := \sum_{ x \in W}   h^{\Delta}_x(M) H_x
\qquad ( \text{resp.} \quad 
\ch_\nabla M := \sum_{ x \in W}   \overline{h^{\nabla}_x(M)} H_x).
\]
Note that $\ch_\Delta M(1) = v\ch_\Delta M$ whilst
$\ch_{\nabla} M(1) = v^{-1}\ch_{\nabla} M$.

By \cite[Propositions 5.7 and 5.9]{S3}, Soergel bimodules have both $\Delta$-
and $\nabla$-filtrations and by \cite[Bemerkung 6.16]{S3} we have
\[
\ch_\Delta B = \ov{\ch_\nabla B}
\]
for any Soergel bimodule. For any Soergel bimodule $B$ we set
\[
\ch(B) := \ch_\Delta(B).\]
By \cite[Theorem 5.3]{S3}, $\ch : [\BC] \to \HC$
gives an inverse of the isomorphism
$\HC \simto [\BC]$ of Soergel's categorification theorem.

Finally, Soergel has given a beautiful formula for the graded rank of
homomorphism spaces between Soergel bimodules in terms of $\Delta$ and
$\nabla$-characters. Given a finite dimensional
graded $\RM$-vector space $V = \bigoplus 
V^i$ we define \[ \gdim V = \sum (\dim V^i) v^{-i} \in \ZM_{\ge 0}[v^{\pm
  1}]. \] Our notation is chosen so that $\gdim (V^{\oplus p}) = p
\gdim V$ for $p \in \ZM_{\ge 0}[v^{\pm 1}]$. Given a free finitely generated
graded right $R$-module $M$ we set \[ \grk M := \gdim (M \ot_R
\RM). \]

\begin{thm}[Soergel's hom formula] \label{thm:hom} Suppose that $B$
  has a $\Delta$-filtration and $B' \in \BC$ or that $B \in \BC$ and
  $B'$ has a $\nabla$-filtration. Then $\Hom^\bullet(B, B')$ is a graded free
  right $R$-module of rank 
\[
\grk \Hom^\bullet(B, B') = \ov{(\ch_\Delta B,\ch_\nabla B')}.
\]
\end{thm}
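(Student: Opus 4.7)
The plan is to prove the formula by dévissage along the $\Delta$-filtration of $B$ and the $\nabla$-filtration of $B'$, reducing to the base case of standards and costandards. First, I would compute $\Hom^\bullet_{R\text{-bim}}(\Delta_x, \nabla_y)$ directly via the geometry of twisted graphs: viewing $R_x$ as the structure sheaf of $\Gr(x) \subset \hg \times \hg$, any bimodule map $R_x \to R_y$ is supported on $\Gr(x) \cap \Gr(y)$. For $x \neq y$ the intersection is a proper closed subvariety of each graph, forcing the morphism to vanish, while for $x = y$ every bimodule endomorphism of $R_x$ is multiplication by an element of $R$. After tracking the grading shifts in $\Delta_x = R_x(-\ell(x))$ and $\nabla_x = R_x(\ell(x))$, this identifies $\Hom^\bullet(\Delta_x, \nabla_y)$ as the appropriate rank-one free right $R$-module (zero for $x \ne y$), matching $\delta_{xy} = (H_x, H_y)$ in the Hecke algebra.

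The key homological input is that $\Hom^\bullet(-, B')$ is exact on short exact sequences of $\Delta$-filtered bimodules whenever $B' \in \BC$ (and symmetrically). Equivalently, $\Ext^1_{R\text{-bim}}(B, B') = 0$ in this setting. I would establish this by first verifying it for Bott-Samelson bimodules $BS(\un{x})$ via their explicit iterated tensor-product construction, and then transferring to general $B' \in \BC$ using that Soergel bimodules are direct summands of Bott-Samelsons. This step is the main obstacle: the vanishing is collective and does \emph{not} reduce to an individual $\Ext^1(\Delta_x, \nabla_y) = 0$ (such vanishings can fail after grading shifts); rather, it is the tilting-type interplay between $\Delta$-filtered bimodules and Soergel bimodules as a class that produces the Ext-vanishing.

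With the base case and the Ext-vanishing in hand, the dévissage is formal. The support filtration produces short exact sequences
\begin{equation*}
0 \to \Gamma_{>x} B \to \Gamma_{\ge x} B \to \Delta_x^{\oplus h^\Delta_x(B)} \to 0,
\end{equation*}
and applying $\Hom^\bullet(-, B')$ turns these into short exact sequences of graded free right $R$-modules. Induction on the length of the $\Delta$-filtration, combined with a parallel induction along the $\nabla$-filtration of $B'$, reduces everything to the base case and yields
\begin{equation*}
\grk \Hom^\bullet(B, B') = \sum_x \overline{h^\Delta_x(B)} \, h^\nabla_x(B'),
\end{equation*}
together with freeness of $\Hom^\bullet(B,B')$ as a graded right $R$-module. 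Unwinding the definition of the pairing $(-,-)$ on $\HC$ together with $(H_x, H_y) = \delta_{xy}$, the right-hand side is exactly $\overline{(\ch_\Delta B, \ch_\nabla B')}$, completing the proof.
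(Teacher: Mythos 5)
This theorem is not proved in the paper; it is quoted from Soergel \cite{S3} (Theorem 5.15 there), so your proposal has to be measured against Soergel's argument. It contains a genuine error at the base case, and that error invalidates the overall architecture. Compute $\Hom^\bullet(\Delta_x,\nabla_x)$ with the paper's conventions: every bimodule endomorphism of $R_x$ is multiplication by a polynomial, so $\End^\bullet(R_x)\cong R$; the generator of $\Delta_x=R_x(-\ell(x))$ sits in degree $+\ell(x)$ and that of $\nabla_x=R_x(\ell(x))$ in degree $-\ell(x)$, so $\Hom^\bullet(\Delta_x,\nabla_x)\cong R(2\ell(x))$ is free of rank one with generator in degree $-2\ell(x)$, hence of graded rank $v^{2\ell(x)}$ --- not $1=\ov{(H_x,H_x)}$. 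In particular the displayed formula is already \emph{false} for $B=\Delta_s$, $B'=\nabla_s$. This is not a bookkeeping slip: it is precisely why the hypotheses require one of $B,B'$ to lie in $\BC$ rather than merely to be $\Delta$- or $\nabla$-filtered.

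Because of this, the double d\'evissage cannot work: reducing \emph{both} arguments to standard/costandard subquotients destroys the hypothesis ``$B'\in\BC$'' (which is not inherited by $\nabla_x^{\oplus h}$), and if both of your exactness claims held you would arrive at $\sum_x \ov{h^\Delta_x(B)}\,h^\nabla_x(B')\,v^{2\ell(x)}$ rather than the asserted pairing. Soergel's proof d\'evissages only the non-Soergel argument: for $B\in\BC$ he computes $\Hom^\bullet(B,\nabla_x)$ directly, using $\Hom_{R\text{-bim}}(B,R_x)\cong\Hom_{R_x}(B\ot_{R\ot_\RM R}R_x,\,R_x)$ together with the fact that the restriction of a Soergel bimodule to the single graph $\Gr(x)$ is graded free over $R_x$ with graded rank governed by the flag subquotients, and the self-duality $\ch_\Delta B=\ov{\ch_\nabla B}$; this yields $\grk\Hom^\bullet(B,\nabla_x)=\ov{h^\Delta_x(B)}$, which is what the formula predicts. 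The remaining substantive input is the exactness of $\Hom(B,-)$ on the $\nabla$-flag of $B'$ for $B\in\BC$ --- the step you correctly flag as the main obstacle --- but its proof in \cite{S3} again exploits that $B$ is a summand of a Bott--Samelson bimodule (induction over tensoring with the $B_s$), not a symmetric ``tilting-type'' statement about the two filtered classes.
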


If Soergel's conjecture holds for $B_x$ and $B_y$ then 
$\ch B_x = \un{H}_x$ and $\ch B_y = \un{H}_y$. Soergel's hom formula
then implies that $\Hom^\bullet(B_x,
B_y)$ is concentrated in degrees $\ge 0$, and $\dim \Hom(B_x,B_y) = \delta_{xy}$.

\subsection{Invariant forms on Soergel bimodules} \label{sec:invariant
  forms} Let $B$ denote a self-dual Soergel bimodule. Equipping $B$ with an invariant non-degenerate bilinear form $\langle -, - \rangle_B$ is the same as
giving an isomorphism $B \simto \DM B$. It is known (see \cite[Satz
6.14]{S3}) that each indecomposable Soergel bimodule is 
self-dual and hence admits a non-degenerate invariant form. Moreover, if Soergel's conjecture holds for $B_x$ then $\End(B_x) = \RM$ (as follows immediately from Soergel's hom formula).
This implies the following, which plays an important role in this paper:

\begin{lem} \label{lem:unique form} Suppose that Soergel's conjecture
  holds for $B_x$. Then $B_x$ admits an invariant form which is unique
  up to a scalar. Moreover, any non-zero
invariant form is non-degenerate. \end{lem}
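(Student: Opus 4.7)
The strategy is to translate the statement into a computation of the space of bimodule homomorphisms $B_x \to \DM B_x$, and then to invoke Soergel's hom formula together with the self-duality of $B_x$.

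First, I would recall that, as noted in \S\ref{sec:invariant forms}, the space of invariant (degree zero) bilinear forms on $B_x$ is canonically identified with $\Hom(B_x, \DM B_x)$, and that non-degeneracy of a form corresponds to the associated map being an isomorphism. By \cite[Satz 6.14]{S3} the bimodule $B_x$ is self-dual, so fix an isomorphism $\varphi : B_x \simto \DM B_x$.

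To prove uniqueness up to scalar, I would use $\varphi$ to identify $\Hom(B_x, \DM B_x)$ with $\End(B_x)$. By hypothesis Soergel's conjecture holds for $B_x$, so $\ch B_x = \un{H}_x$. Applying Soergel's hom formula (Theorem \ref{thm:hom}) together with the identity $(\un{H}_x, \un{H}_x) \in 1 + v\ZM[v]$ recalled in \S\ref{sec:hecke}, one concludes that $\Hom^\bullet(B_x, B_x)$ is a graded free right $R$-module of graded rank $1 + v\ZM_{\ge 0}[v]$. In particular $\End(B_x) = \RM$, so the space of invariant forms on $B_x$ is one-dimensional.

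For non-degeneracy, let $\psi : B_x \to \DM B_x$ be any nonzero bimodule map. Then $\varphi^{-1} \circ \psi$ is a nonzero element of $\End(B_x) = \RM$, hence a nonzero scalar; in particular it is invertible, so $\psi$ is an isomorphism, and the corresponding invariant form is non-degenerate. There is no real obstacle here: the statement is essentially a direct consequence of self-duality of $B_x$ and the fact that, under Soergel's conjecture, $\End(B_x) = \RM$; all of the work is already contained in Theorem \ref{thm:hom} and the self-duality results from \cite{S3}.
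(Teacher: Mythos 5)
Your proof is correct and follows essentially the same route as the paper: identify invariant forms with $\Hom(B_x,\DM B_x)$, use self-duality from \cite[Satz 6.14]{S3} to produce an isomorphism in that space, and use Soergel's hom formula (under $S(x)$) to see the space is one-dimensional, so every non-zero form is a non-zero multiple of a non-degenerate one. The paper simply compresses the computation of $\End(B_x)=\RM$ into the remarks preceding the lemma, which you have spelled out.
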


\begin{proof} Giving an invariant form on $B_x$ is the same thing as
  giving a graded $R$-bimodule morphism $B_x \to \DM B_x$. By the remarks
  preceding the lemma, the space of such maps is one-dimensional and
  contains an isomorphism. Hence $B_x$ admits an invariant form
  $\langle - , - \rangle_{B_x}$, and all others are scalar multiples
  of $\langle -, - \rangle_{B_x}$. The lemma now follows.
\end{proof}

We now explain how Soergel bimodules may be inductively equipped with
invariant forms. Fix a Soergel bimodule $B$ and consider the two maps
$\a, \b : B \to BB_s = B \otimes_R B_s$ given by
\[
\a(b) := bc_{\id} \quad \text{and} \quad \b(b) := bc_s
\]
Note that $\b$ is a morphism of bimodules, whilst $\a$ is only a
morphism of left modules: by \eqref{eq:demazure} one has
\begin{equation} \label{eq:alphar}
\a(br) = \a(b)(sr) + \b(b) \partial_s(r)
\end{equation}
for $b \in B$ and $r \in R$.

Suppose that $B$ is equipped with an invariant form $\langle -, -
\rangle_B$. Then there is a unique invariant form $\langle -, -
\rangle_{BB_s}$ on $BB_s$, which we call the
\emph{induced form},
satisfying
\begin{align} \label{alphaalpha}
\langle \a(b), \a(b') \rangle_{BB_s} &= \partial_s(\langle b, b' \rangle_B)\\
\langle \a(b), \b(b') \rangle_{BB_s} = \langle b, b' \rangle_B &\text{ and }  \label{alphabeta}
\langle \b(b), \a(b') \rangle_{BB_s} = \langle b, b' \rangle_B \\
\langle \b(b), \b(b') \rangle_{BB_s} &= \langle b, b' \rangle_B \alpha_s \label{betabeta}
\end{align}
for all $b, b' \in B$. Indeed, if $e_1, \dots, e_m$ denotes a basis for $B$ as a right $R$-module then $\a(e_1), \dots, \a(e_m), \b(e_1), \dots, \b(e_m)$ is a basis for $BB_s$ and
the above formulas fix $\langle -, - \rangle_{BB_s}$ on this basis. It is straightforward to check that $\langle -, - \rangle_{BB_s}$ satisfies \eqref{alphaalpha}, \eqref{alphabeta}
and \eqref{betabeta} for all $b, b' \in B$, and that $\langle rb, b'
\rangle_{BB_s} = \langle b, rb' \rangle$ for all $b, b' \in B$ and $r
\in R$. Clearly $\langle -, - \rangle_{BB_s}$ is symmetric if $\langle
-, - \rangle$ is.

Suppose that $B$ is a summand of a Bott-Samelson bimodule $BS(\un{x})$. Then $B$ is equipped with an invariant symmetric form $\langle -, - \rangle_B$, obtained by restriction from the
intersection form on $BS(\un{x})$. There are now two ways to equip $BB_s$ with an invariant form: either via the induced form as above, or by viewing $BB_s$ as a summand of $BS(\un{x})B_s
= BS(\un{x}s)$ and considering the restriction of the intersection
form. It is an easy exercise to see that these two forms agree, which motivates
the above formulas. If we apply this for $B =
BS(\un{x})$ we conclude that the intersection form on
$BS(\un{x})$ can also be obtained by starting with the canonical
multiplication form on $R$, and iterating the construction of the
induced form.

\begin{lem} \label{lem:induced non-degen} Suppose that $B$ is an
  $R$-bimodule which is equipped with an invariant form $\langle -, -
  \rangle_B$. Assume that $B$ is free as a right $R$-module and that $\langle -, -
  \rangle_B$ is non-degenerate. Then $\langle -, - \rangle_{BB_s}$ is
  non-degenerate. \end{lem}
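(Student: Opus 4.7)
The plan is to exhibit the Gram matrix of $\langle-,-\rangle_{BB_s}$ in a natural right $R$-basis of $BB_s$ and check directly that its determinant is a unit in $R$. First I would fix a homogeneous right $R$-basis $e_1,\dots,e_m$ of $B$; since $\{c_{\id},c_s\}$ is a right $R$-basis of $B_s$ and $BB_s = B\ot_R B_s$, the elements $\{\a(e_i),\b(e_i)\}_{i=1}^m$ then form a right $R$-basis of $BB_s$. Because $R$ is commutative and both $BB_s$ and $\DM(BB_s)$ are free as right $R$-modules of the same rank, non-degeneracy of an invariant form on either is equivalent to invertibility over $R$ of its Gram matrix. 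Applied to $B$, this means the hypothesis on $\langle-,-\rangle_B$ is exactly that the matrix $G=(G_{ij})$ with $G_{ij}:=\langle e_i,e_j\rangle_B$ satisfies $\det G\in R^\times$.

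Applying \eqref{alphaalpha}--\eqref{betabeta} to the basis elements writes the Gram matrix of $\langle -, -\rangle_{BB_s}$ as the $2m\times 2m$ block matrix
\[
M = \begin{pmatrix} \partial_s(G) & G \\ G & \a_s G\end{pmatrix},
\]
where $\partial_s$ acts entrywise and the central element $\a_s\in R$ is identified with $\a_s I_m$. To compute $\det M$, I pass momentarily to $\mathrm{Frac}(R)$ so that $G^{-1}$ exists, and perform the block row operation that replaces the top block row by itself minus $\partial_s(G) G^{-1}$ times the bottom block row. The first $m$ columns of the new top row become zero, and the last $m$ columns become $G - \partial_s(G)\a_s$. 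The key identity here is
\[
G - \partial_s(G)\a_s = sG,
\]
which follows entrywise from the definition $\partial_s(r) = (r-sr)/\a_s$.

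Hence
\[
\det M = \det\!\begin{pmatrix} 0 & sG \\ G & \a_s G\end{pmatrix} = \pm\det(G)\cdot\det(sG) = \pm\det(G)\cdot s(\det G),
\]
which is a unit in $R$ because $s$ is a ring automorphism. Therefore $M$ is invertible over $R$, and $\langle-,-\rangle_{BB_s}$ is non-degenerate. I do not expect any genuine obstacle: the whole argument is a short Gram matrix calculation, and the only moment of cleverness is the identity $G - \partial_s(G)\a_s = sG$ that collapses the row reduction to a block anti-triangular matrix.
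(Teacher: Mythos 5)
Your proof is correct. Both you and the paper proceed by writing down a Gram matrix for $\langle-,-\rangle_{BB_s}$ from the defining formulas \eqref{alphaalpha}--\eqref{betabeta} and showing it is invertible over $R$, but the finishing moves differ. The paper sidesteps any determinant computation by evaluating the form on the \emph{pair} of bases $\a(e_1),\dots,\a(e_m),\b(e_1),\dots,\b(e_m)$ and $\b(e_1^*),\dots,\b(e_m^*),\a(e_1^*),\dots,\a(e_m^*)$, where $e_i^*$ is the dual basis of $B$ with respect to $\langle-,-\rangle_B$; swapping $\a$ and $\b$ in the second basis makes the matrix unitriangular (the off-diagonal block is $\a_s I_m$ and the $\partial_s$-block vanishes since $\partial_s(\delta_{ij})=0$), so invertibility is immediate. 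You instead use a single basis, which forces you to compute $\det M$ for the full block matrix $\bigl(\begin{smallmatrix}\partial_s(G) & G\\ G & \a_s G\end{smallmatrix}\bigr)$; your row reduction over $\mathrm{Frac}(R)$ is legitimate ($R$ is a domain), the identity $G-\a_s\partial_s(G)=sG$ is the correct entrywise consequence of $\partial_s(r)=(r-sr)/\a_s$, and the conclusion $\det M=\pm\det(G)\,s(\det G)\in R^\times$ is right. Your route requires the hypothesis on $\langle-,-\rangle_B$ only through $\det G\in R^\times$ and yields the explicit formula for $\det M$ as a small bonus; the paper's dual-basis trick is shorter and avoids the fraction field entirely. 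No gaps.
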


\begin{proof} Because $\langle -, - \rangle_B$ is non-degenerate 
 and $B$ is free as a right $R$-module we
  can fix a basis $e_1, \dots, e_m$ and dual basis $e_1^*, \dots,
  e_m^*$ for $B$ as a right $R$-module. Then
\[\a(e_1), \dots, \a(e_m),
  \b(e_1), \dots, \b(e_m)\]and
\[
  \b(e_1^*), \dots, \b(e_m^*),
\a(e_1^*), \dots, \a(e_m^*),
\]are bases for $BB_s$ as a right
  $R$-module. Now \eqref{alphaalpha}, \eqref{alphabeta} and
  \eqref{betabeta} show that the matrix of $\langle -, -
  \rangle_{BB_s}$ in this pair of bases has the form
\begin{gather*}
\left ( \begin{matrix} I_m & \a_s I_m \\
0 & I_m \end{matrix} \right )
\end{gather*}
where $I_m$ denotes the $m \times m$ identity matrix. The zero matrix in the lower left arises because $\partial_s(1)=0$.
Hence $\langle -, - \rangle_{BB_s}$ is non-degenerate as
  claimed.
\end{proof}

\begin{cor} The intersection form on a Bott-Samelson bimodule is non-degenerate. \end{cor}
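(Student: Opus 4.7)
The plan is to induct on the length $d$ of the expression $\un{x} = s_1 s_2 \cdots s_d$, using Lemma \ref{lem:induced non-degen} for the inductive step and the multiplication form on $R$ for the base case.

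For the base case $d = 0$, we have $BS(\emptyset) = R$, and the intersection form is $\langle r, r' \rangle = rr'$, which is manifestly non-degenerate (it corresponds to the identity map $R \to \DM R = R$). For the inductive step, suppose that the intersection form on $BS(\un{x})$ is non-degenerate for some expression $\un{x}$ of length $d-1$, and consider $BS(\un{x}s) = BS(\un{x})B_s$. As noted in the paragraph preceding Lemma \ref{lem:induced non-degen}, the intersection form on $BS(\un{x}s)$ coincides with the form on $BS(\un{x})B_s$ induced (in the sense of \eqref{alphaalpha}--\eqref{betabeta}) from the intersection form on $BS(\un{x})$. Moreover, $BS(\un{x})$ is free as a right $R$-module, with basis given by the elements $c_{\un{\e}}$ indexed by subexpressions $\un{\e}$ of $\un{x}$. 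Thus Lemma \ref{lem:induced non-degen} applies and shows that the intersection form on $BS(\un{x}s)$ is non-degenerate.

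There is no real obstacle: the key computation has already been carried out in the proof of Lemma \ref{lem:induced non-degen}, where the matrix of the induced form in a suitable pair of dual bases was shown to be upper unitriangular. The only things to verify are the base case and the observation that the two a priori different ways of putting an invariant form on $BS(\un{x})B_s$ (restriction of the intersection form on $BS(\un{x}s)$ versus the induced form) agree, which is the easy exercise mentioned in the text just above Lemma \ref{lem:induced non-degen}.
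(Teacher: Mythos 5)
Your proof is correct and is exactly the argument the paper intends: the corollary is stated immediately after Lemma \ref{lem:induced non-degen} precisely because the intersection form on $BS(\un{x})$ is obtained by iterating the induced-form construction starting from the multiplication form on $R$, so induction on the length of the expression gives non-degeneracy. Nothing is missing.
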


The following positivity calculation is not entirely necessary for the proofs
below. However it does give a simple explanation of why the global sign in the
Hodge-Riemann bilinear relations is correct.

\begin{lem} \label{lem:BSpositivity} The Lefschetz form $(-,-)^{-\ell(x)}_\rho$ on $\ov{BS(\un{x})}^{-\ell(x)} \cong \RM$ is positive-definite, when $\un{x}$ is a reduced expression. \end{lem}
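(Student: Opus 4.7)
The space $\ov{BS(\un{x})}^{-\ell(x)}$ is one-dimensional, spanned by the image of $c_\bot := c_\id c_\id \cdots c_\id$, so the claim reduces to showing that the scalar $\ov{\langle c_\bot, \rho^{\ell(x)} c_\bot\rangle_{BS(\un{x})}} \in \RM$ is strictly positive.

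Setting $m := \ell(x)$ and $\un{x} = \un{y}s$ with $\un{y} = s_1\cdots s_{m-1}$, the plan is to compute this value inductively via the induced-form construction from \S\ref{sec:invariant forms}, using that $c_\bot = \a(c_\bot^y)$ (where $\a(b) := bc_\id$) and that $\rho^m c_\bot = \a(\rho^m c_\bot^y)$ since left multiplication by $\rho$ commutes with the left-$R$-linear map $\a$. Identity \eqref{alphaalpha} gives
\[
\langle c_\bot, \rho^m c_\bot\rangle_{BS(\un{x})} = \partial_s\bigl(\langle c_\bot^y, \rho^m c_\bot^y\rangle_{BS(\un{y})}\bigr),
\]
and iterating down to $BS(\emptyset) = R$ (whose intersection form is $(a,b)\mapsto ab$) yields
\[
\langle c_\bot, \rho^m c_\bot\rangle_{BS(\un{x})} = \partial_{s_m}\partial_{s_{m-1}}\cdots\partial_{s_1}(\rho^m),
\]
which is of degree zero in $R$ and so equals its image in $\RM$. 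By braid invariance, this coincides with $\partial_{x^{-1}}(\rho^m)$, independent of the chosen reduced expression.

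Positivity of $\partial_{x^{-1}}(\rho^m)$ is proved by induction on $m$, using the expansion $\partial_s(\rho^n) = \rho(\a_s^\vee)\sum_{k=0}^{n-1}\rho^k(s\rho)^{n-1-k}$ together with the Leibniz rule $\partial_s(fg) = \partial_s(f)g + (sf)\partial_s(g)$. The crucial positivity input is \eqref{eq:pos}: reducedness of $\un{x}$ ensures $s_is_{i-1}\cdots s_1 > s_{i-1}\cdots s_1$ in Bruhat order for each $i$, so that $(s_{i-1}\cdots s_1\rho)(\a_{s_i}^\vee) > 0$. The $m=2$ case already illustrates the mechanism: $\partial_t\partial_s(\rho^2) = \rho(\a_s^\vee)\bigl[\rho(\a_t^\vee) + \rho(s\a_t^\vee)\bigr]$, with both factors strictly positive by \eqref{eq:pos}. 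The main obstacle is the general inductive step, where intermediate Leibniz expansions introduce non-positive Cartan matrix entries $\a_s(\a_t^\vee) \le 0$; these contributions must be controlled by carrying a sufficiently strong inductive hypothesis -- for instance, that each partial value $\partial_{s_k}\cdots\partial_{s_1}(\rho^m)$ admits a positive-combination decomposition in the scalars $(w\rho)(\a_{s_i}^\vee)$ for suitable $w$ -- so that non-negativity is preserved at each stage of the expansion.
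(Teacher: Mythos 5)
Your reduction of the lemma to the positivity of the degree-zero constant $\partial_{s_m}\partial_{s_{m-1}}\cdots\partial_{s_1}(\rho^m)$ is correct: $c_\bot=\a(c_\bot^{\un{y}})$, the map $\a$ is left $R$-linear, and iterating \eqref{alphaalpha} down to $R$ (using that the intersection form on $BS(\un{x})$ is the iterated induced form) gives exactly this expression for $\langle c_\bot,\rho^m c_\bot\rangle_{BS(\un{x})}$. The problem is that the positivity of this constant is the entire content of the lemma, and your proposal does not prove it. In the Leibniz expansion you describe, the scalars that arise are of the form $(w\rho)(\a_{s_i}^\vee)$ where $w$ is the ordered product of an arbitrary subset of $\{s_1,\dots,s_{i-1}\}$, not only of the full prefix $s_{i-1}\cdots s_1$. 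By \eqref{eq:pos} such a scalar is positive precisely when $s_iw>w$, and reducedness of $\un{x}$ guarantees this only for the full prefixes. Already for $\un{x}=sts$ in type $A_2$ the subword $w=s$ (omitting $t$) produces $(s\rho)(\a_s^\vee)=-\rho(\a_s^\vee)<0$, and such terms do occur in the expansion. You acknowledge this (``the main obstacle'') and assert that the negative contributions ``must be controlled by carrying a sufficiently strong inductive hypothesis,'' but no such hypothesis is formulated and no cancellation mechanism is exhibited; as written the induction does not close.

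What the paper does at exactly this point is not combinatorial control but a degree argument. Lemma \ref{lem: breaking lefschetz} writes $\rho\cdot(-)=\sum_i\gamma_i\,\chi_i\circ\phi_i+(-)\cdot x^{-1}\rho$ with $\gamma_i=(s_{i-1}\cdots s_1\rho)(\a_{s_i}^\vee)$; only full prefixes occur, so every $\gamma_i>0$ by \eqref{eq:pos}. One then applies $\rho^{\ell(x)-1}$ to each $\chi_i\phi_i(c_\bot)$ inside $\ov{BS(\un{x}_{\widehat{i}})}$: when $\un{x}_{\widehat{i}}$ is reduced the induction hypothesis applies, and when it is not, $BS(\un{x}_{\widehat{i}})$ is a direct sum of shifts of bimodules $B_z$ with $\ell(z)<\ell(x)-1$, on which $\rho^{\ell(x)-1}$ vanishes for degree reasons. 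These non-reduced terms are precisely the source of the uncontrolled signs in your expansion, and the categorical input (decomposition of non-reduced Bott--Samelson bimodules into lower-length indecomposables) is what kills them. To complete your route while staying inside Demazure-operator combinatorics you would instead need something like a Chevalley formula for the nil-Hecke ring of an arbitrary Coxeter system, expressing $\partial_{x^{-1}}(\rho^m)$ as a sum over saturated Bruhat chains of products of $\rho(\beta^\vee)$ over positive roots $\beta$; that is a genuine additional input which your proposal neither states nor proves.
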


\begin{proof} Let $c_{\bot} := c_{\id} c_{\id} \cdots c_{\id}$, which spans $BS(\un{x})^{-\ell(x)}$. We claim that $\rho^{\ell(x)} c_{\bot} = N c_{\top} \in \ov{BS(\un{x})}$ for some
$N>0$, which will imply the result. We induct on $\ell(x)$. The result is clear when $\ell(x)=0$.

By Lemma \ref{lem: breaking lefschetz} we have
\[ \rho \cdot c_{\bot} =
\sum_i (s_{i-1} \cdots s_1 \rho )(\a_{s_i}^\vee) \chi_i(c_{\bot}) +
c_{\bot} \cdot (x^{-1} \rho) \] inside $BS(\un{x})$.
Note that $(s_{i-1} \cdots s_1 \rho )(\a_{s_i}^\vee)$ is positive for all $i$,
by our positivity assumption on $\rho$ and the fact that $\un{x}$ is a
reduced expression. The final term clearly vanishes in $\ov{BS(\un{x})}$, so it remains to see what happens when $\rho^{\ell(x)-1}$ is applied to every other term.

Suppose that $\un{x}_{\widehat{i}}$ is a reduced expression. Then by induction $\rho^{\ell(x)-1} c_{\bot} = N_i c_{\top} \in \ov{BS(\un{x}_{\widehat{i}})}$ for some $N_i>0$. Clearly
$\chi_i(c_{\top})=c_{\top}$, so $\rho^{\ell(x)-1} \chi_i(c_{\bot}) = N_i c_{\top} \in \ov{BS(\un{x})}$.

Suppose that $\un{x}_{\widehat{i}}$ is not a reduced expression. In
this case,
\[
BS(\un{x}_{\widehat{i}}) \cong \bigoplus B_z^{\oplus p_z}
\]
with all $z$ appearing on the right hand side satisfying
$\ell(z)<\ell(x)-1$ and $p_z \in \ZM_{\ge 0}[v^{\pm 1}]$. For 
degree reasons $\rho^{\ell(x)-1}$ vanishes on $\ov{B_z}$ for any such
$z$, and therefore vanishes identically on
$\ov{BS(\un{x}_{\widehat{i}})}$. Therefore, $\rho^{\ell(x)-1} 
\chi_i(c_{\bot}) = 0$ for such $i$.

Therefore, $\rho^{\ell(x)} c_{\bot} = \left (\sum_i (s_{i-1} \cdots s_1 \rho
)(\a_{s_i}^\vee) N_i \right )  c_{\top} \in \ov{BS(\un{x})}$, with 
\[
\sum_i (s_{i-1} \cdots s_1 \rho )(\a_{s_i}^\vee) N_i > 0. \qedhere
\] \end{proof}

The following simple observation was promised in the introduction:

\begin{lem} \label{lem:HRembedding} If $S(x)$ holds, then $HR(x)$ and $HR(\un{x})$ are equivalent, for any reduced expression $\un{x}$. \end{lem}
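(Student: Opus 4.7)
The plan is to show that, under $S(x)$, the restriction of the intersection form on $BS(\un{x})$ to any embedded copy of $B_x \subset BS(\un{x})$ agrees with the intersection form $\langle -,-\rangle_{B_x}$ up to a \emph{positive} scalar. Once this is in hand, the equivalence $HR(x) \iff HR(\un{x})$ is automatic: the two Lefschetz forms on $\ov{B_x}$ differ by the same positive factor, the primitive subspaces $\ker \rho^{i+1} \subset (\ov{B_x})^{-i}$ are defined purely in terms of $\rho$ (and so are identical), and definiteness with a prescribed sign on each primitive piece transfers back and forth under a positive rescaling.

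First I would invoke Lemma \ref{lem:unique form}: under $S(x)$ the space of invariant forms on $B_x$ is one-dimensional, and the restriction of $\langle -,-\rangle_{BS(\un{x})}$ to $B_x$ is invariant, hence equals $\lambda \langle -,-\rangle_{B_x}$ for some $\lambda \in \RM$. The whole lemma reduces to showing $\lambda > 0$. To pin down the sign I would evaluate both sides on a generator $c$ of $(B_x)^{-\ell(x)} \cong \RM$. The normalization convention for $\langle -,-\rangle_{B_x}$ gives $\langle \ov{c}, \rho^{\ell(x)} \ov{c}\rangle_{\ov{B_x}} > 0$. On the Bott--Samelson side, $(BS(\un{x}))^{-\ell(x)}$ is one-dimensional, spanned by $c_\bot := c_\id c_\id \cdots c_\id$: in the right-$R$-module basis $\{c_{\un{\e}}\}$ the minimal total degree $-\ell(x)$ is attained uniquely by $\un{\e} = (\id,\ldots,\id)$, and then only by constants times $c_\bot$. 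Since $B_x \hookrightarrow BS(\un{x})$ is an injective direct-summand inclusion of graded bimodules, the image of $c$ is a nonzero scalar multiple $\mu c_\bot$, and the inclusion commutes with left multiplication by $\rho$. Lemma \ref{lem:BSpositivity} then supplies $\langle \ov{c_\bot}, \rho^{\ell(x)} \ov{c_\bot}\rangle_{\ov{BS(\un{x})}} > 0$, so the restricted pairing at $\ov{c}$ equals $\mu^2$ times a positive quantity; comparing with the positivity of $\langle \ov{c}, \rho^{\ell(x)} \ov{c}\rangle_{\ov{B_x}}$ forces $\lambda > 0$.

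There is essentially no obstacle here. The crux is the positivity statement furnished by Lemma \ref{lem:BSpositivity} together with the immediate identification of the bottom graded piece of $BS(\un{x})$ from its $c_{\un{\e}}$-basis; everything else is formal extraction of a sign from the one-dimensional space of invariant forms granted by $S(x)$.
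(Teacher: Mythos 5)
Your proposal is correct and follows essentially the same route as the paper: both reduce to Lemma \ref{lem:unique form} (the restricted form is a scalar multiple of the intersection form on $B_x$) and then use the positive definiteness of the Lefschetz form on $\ov{BS(\un{x})}^{-\ell(x)}$ from Lemma \ref{lem:BSpositivity}, together with the one-dimensionality of the bottom degree, to see the scalar is positive. The only cosmetic difference is that the paper justifies $\dim B_x^{-\ell(x)} = 1$ via the $\nabla$-character, while you appeal to the normalization already fixed in the introduction; otherwise the arguments coincide.
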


\begin{proof} 	Obviously $B_x^i = 0$ for $i < -\ell(x)$. By considering
  the $\nabla$-character of $B_x$ it is easy to see that 
  $B_x^{-\ell(x)}$ is one dimensional. Hence any embedding $B_x \hookrightarrow
  BS(\un{x})$ induces an isomorphism $B_x^{-\ell(x)} \simto
  BS(\un{x})^{-\ell(x)} = \RM( c_{\bot})$.
	
	Given $S(x)$, Lemma \ref{lem:unique form} implies that the restriction of the intersection form on $BS(\un{x})$ to $B_x$ must be a scalar multiple of the intersection form on $B_x$. The Lefschetz form on $\ov{BS(\un{x})}^{-\ell(x)}$ is positive definite, and hence this scalar must be positive. Now $HR(x)$ and $HR(\un{x})$ are equivalent. \end{proof}

\section{The embedding theorem} \label{sec:embedding}

In this section we fix $x \in W$ and $s \in S$ with $xs > x$,
and we assume $S(y)$ and $HR(\un{y})$ for all $y < xs$. By
$HR(\un{y})$, if we choose an embedding $B_y \summand BS(\un{y})$ then
the restriction of the intersection form on $BS(\un{y})$ to $B_y$
yields a non-degenerate invariant form $\langle -,- \rangle_{B_y}$ on
$B_y$ which satisfies the Hodge-Riemann bilinear relations. 
Let us also fix a generator $c_\bot$ of the one-dimensional vector space
$B_y^{-\ell(y)}$. Then $HR(\un{y})$ implies 
\begin{equation} \label{eq:ynorm} \langle \rho^{\ell(y)} \cdot c_\bot, c_\bot \rangle_{B_y} = N
\end{equation} for some $0 < N \in \RM$.

Similarly, we fix an embedding $B_x \summand BS(\un{x})$ which induces
a non-degenerate form $\langle -, - \rangle_{B_x}$ on $B_x$. As discussed
in \S\ref{sec:invariant forms}, this induces a non-degenerate
invariant symmetric form $\langle -,- \rangle_{B_xB_s}$ on 
$B_x B_s$, compatible with the induced embedding $B_x B_s \summand
BS(\un{x})B_s = BS(\un{x}s)$. 

Having fixed these forms on $B_y$ and $B_xB_s$ we obtain a canonical identification \[ \Hom(B_y, B_xB_s) \simto \Hom(B_xB_s, B_y) \] sending $f \in \Hom(B_y, B_xB_s)$ to its adjoint
$f^*$. That is, $f^*$ is uniquely determined by the identity $\langle f(b), b' \rangle_{B_x B_s} = \langle b, f^*(b') \rangle_{B_y}$ for all $b \in B_y$ and $b' \in B_xB_s$.

 On
$\Hom(B_y, B_xB_s)$ we can consider the \emph{local
intersection form}
\[ (f, g)_y^{x,s} := g^* \circ f \in \End(B_y) = \RM. \]

\begin{thm}[Embedding theorem] \label{thm:embedding}
The map
\begin{gather*}
\iota : \Hom(B_y, B_xB_s) \to
(\ov{B_xB_s})^{-\ell(y)}  : f \mapsto \ov{f(c_\bot)}
\end{gather*}
is injective, with image contained in the primitive subspace
  \[ P^{-\ell(y)}_\rho \subset (\ov{B_xB_s})^{-\ell(y)}.\]Moreover,
  $\iota$ is an isometry with respect to the Lefschetz form up to a
  factor of $N$: for all
  $f, g \in \Hom(B_y, B_xB_s)$ we have
\begin{equation} \label{eq:scalarisometry}
N(f, g)_y^{x,s} = (\iota(f), \iota(g))^{-\ell(y)}_\rho.
\end{equation}
\end{thm}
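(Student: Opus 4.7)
The plan is to prove the three assertions in turn -- primitivity of the image, the isometry formula, and injectivity of $\iota$ -- with the isometry being the main computation and injectivity the main obstacle.

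\emph{Primitivity.} The indecomposable $B_y$ is concentrated in degrees $[-\ell(y),\ell(y)]$: self-duality forces symmetric Betti numbers, and the minimum degree is $-\ell(y)$ because the $\nabla$-quotient $\Gamma_{\le y}/\Gamma_{<y}\cong \nabla_y = R_y(\ell(y))$ starts there. In particular $B_y^{\ell(y)+2}=0$, so $\rho^{\ell(y)+1}c_\bot=0$ in $B_y$ for degree reasons alone. Since $f$ is a bimodule map it commutes with left multiplication by $\rho$, giving $\rho^{\ell(y)+1}f(c_\bot)=f(\rho^{\ell(y)+1}c_\bot)=0$, and hence $\iota(f)\in\ker\rho^{\ell(y)+1}=P^{-\ell(y)}_\rho$.

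\emph{Isometry.} By $S(y)$ and Soergel's hom formula (Theorem \ref{thm:hom}) one has $\End(B_y)=\RM$. Therefore both $g^*\circ f$ and $f^*\circ g$ lie in $\RM$, and the symmetry of the invariant forms on $B_y$ and $B_xB_s$ identifies these two scalars and shows that each equals the local intersection form $(f,g)^{x,s}_y$ (one checks $\langle f(b),g(b')\rangle = \langle g^*f(b),b'\rangle = \langle b,f^*g(b')\rangle$, which forces $g^*\circ f = f^*\circ g$ as scalars). Then adjointness, together with the fact that $f^*$ is a bimodule map and so commutes with left multiplication by $\rho$, gives
\[
\langle f(c_\bot),\rho^{\ell(y)}g(c_\bot)\rangle_{B_xB_s}
=\langle c_\bot,\rho^{\ell(y)}f^*(g(c_\bot))\rangle_{B_y}
=(f,g)^{x,s}_y\,\langle c_\bot,\rho^{\ell(y)}c_\bot\rangle_{B_y}.
\]
Reducing modulo $R^+$ and invoking \eqref{eq:ynorm} yields $(\iota(f),\iota(g))^{-\ell(y)}_\rho=N(f,g)^{x,s}_y$.

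\emph{Injectivity.} This is the main obstacle: the isometry cannot furnish injectivity directly, since non-degeneracy of $(-,-)^{x,s}_y$ is essentially equivalent to $S(xs)$, which is not yet available. My approach is to localize near $y$ via the $\nabla$-filtration. Since $\ch_\Delta B_y$ is supported on $\{z\le y\}$, every $f$ satisfies $f(B_y)\subseteq\Gamma_{\le y}(B_xB_s)$ and descends to a map $\bar f\colon \nabla_y\to \Gamma_{\le y}(B_xB_s)/\Gamma_{<y}(B_xB_s)\cong\nabla_y^{\oplus h^\nabla_y(B_xB_s)}$. The target is concentrated in degrees $\ge-\ell(y)$, so its degree $-\ell(y)$ component meets $R^+$-multiples trivially; hence $\iota(f)=0$ forces the image of $\bar f(c_\bot)$ there to vanish, so $\bar f=0$ and $f(B_y)\subseteq\Gamma_{<y}(B_xB_s)$. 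A graded rank computation via Soergel's hom formula applied to both $\Hom^\bullet(B_y,B_xB_s)$ and $\Hom^\bullet(B_y,\Gamma_{<y}(B_xB_s))$ shows that the difference in degree $0$ is exactly the dimension of the image of $\iota$, so the map $f\mapsto\bar f$ is an injection factoring $\iota$; this yields injectivity of $\iota$.
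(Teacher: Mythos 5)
Your treatment of primitivity and of the isometry \eqref{eq:scalarisometry} is correct and essentially the same as the paper's, modulo one slip: $B_y$ itself is \emph{not} concentrated in degrees $[-\ell(y),\ell(y)]$ (it is a graded free right $R$-module of positive rank, hence nonzero in arbitrarily high degrees); only $\ov{B_y}$ is. The fix is immediate: pass to the induced map $\ov{f}\colon \ov{B_y}\to\ov{B_xB_s}$ and use that $\rho^{\ell(y)+1}$ annihilates $\ov{B_y}$, which is exactly what the paper does.

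For injectivity you take a genuinely different, dual route: you localize at $y$ from below via the $\nabla$-filtration and the generator of $\nabla_y$ in degree $-\ell(y)$, whereas the paper localizes from above, identifying $\Hom(B_y,B_xB_s)$ with $\Gamma_y(B_xB_s)(\ell(y))$ via the subobject $\Delta_y\subseteq B_y$ and evaluating at a generator in degree $+\ell(y)$; the paper must then invoke $hL(y)$ to relate that generator to $\rho^{\ell(y)}c_\bot$, a step your version avoids entirely. Your route is viable but has two gaps. First, to pass from $\iota(f)=0$, i.e. $f(c_\bot)\in (B_xB_s)R^+$, to the vanishing of the image of $f(c_\bot)$ in $\Gamma_{\le y}(B_xB_s)/\Gamma_{<y}(B_xB_s)$ in degree $-\ell(y)$, you need $\Gamma_{\le y}(B_xB_s)\cap (B_xB_s)R^+ = \Gamma_{\le y}(B_xB_s)R^+$; this is not formal and requires the fact that the support filtration splits as graded right $R$-modules (the paper cites the proof of Proposition 6.4 of \cite{S3} for precisely this point). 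Second, your concluding step is mis-stated and circular as written: ``the difference in degree $0$ is exactly the dimension of the image of $\iota$'' presupposes injectivity. What is actually needed, and what Soergel's hom formula does deliver, is simply that $\Hom(B_y,\Gamma_{<y}(B_xB_s))=0$ in degree zero: by $S(y)$ one has $\ch_\Delta B_y=\un{H}_y$ with $h_{z,y}\in v\ZM[v]$ for $z<y$, while $h^\nabla_z(B_xB_s)\in\ZM_{\ge 0}[v^{-1}]$, so the pairing $(\un{H}_y,\ch_\nabla \Gamma_{<y}(B_xB_s))$ lies in $v\ZM_{\ge 0}[v]$ and $\Hom^\bullet(B_y,\Gamma_{<y}(B_xB_s))$ is concentrated in strictly positive degrees. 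With these two repairs your argument goes through.
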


\begin{remark} The above constructions depend on the choices ($\RM_{>0}$-torsors) of invariant forms on $B_y$ and $B_x$ and the choice (an $\RM^\times$-torsor) of $c_\bot \in
B_y^{-\ell(y)}$. The reader can confirm that both sides of
\eqref{eq:scalarisometry} are affected equally by any rescaling, and the coefficient of isometry $N$
is positive for any choice. \end{remark}


\begin{proof}
Consider the exact sequence of modules with $\Delta$-flag \[ \Delta_y = \Gamma_{\ge y} B_y \into B_y \onto B/\Gamma_{\ge y} B_y. \] We know that $\ch_\Delta \Delta_y = H_y$,
$\ch B_y = \un{H}_y$, and that $\ch_\Delta (B_y/\Gamma_{\ge y} B_y) =
\un{H}_y - H_y$ because this is part of the $\Delta$-flag on
$B_y$. Therefore the characters add up, and we can use Soergel's hom
formula (Theorem \ref{thm:hom}) to conclude that we have an exact sequence \[ \Hom^\bullet(B/\Gamma_{\ge y}B_y,
B_xB_s) \into \Hom^\bullet(B_y, B_xB_s) \onto \Hom^\bullet(\Delta_y, B_xB_s). \] Now $\un{H}_y - H_y \in \bigoplus v\ZM_{\ge 0}[v] H_z$ and $\ch(B_xB_s) =
\un{H}_x \un{H}_s \in \bigoplus \ZM_{\ge 0}[v] H_z$. Hence $\Hom^{\le 0}(B/\Gamma_{\ge y}B_y, B_xB_s) = 0$ and we have an isomorphism \[ \Hom(B_y, B_xB_s) \simto \Hom(\Delta_y,
B_xB_s) = \Gamma_y(B_xB_s)(\ell(y)). \]
Using Soergel's hom formula again we see that
$\Hom^\bullet(B_y,B_xB_s)$ is concentrated in degrees $\ge 0$ and
hence $\Gamma_y(B_xB_s)$ is concentrated in degrees 
$\ge \ell(y)$.
Now $B_xB_s$ is free as a right $R$-module and it is
known that $\Gamma_y(B_xB_s)$ is a direct 
summand of $B_xB_s$ as a right $R$-module (see the proof of
Proposition 6.4 in \cite{S3}). It follows that if $m \in B_xB_s$ and
$mr \in \Gamma_y(B_xB_s)$ for some $r \in R$ then $m \in
\Gamma_y(B_xB_s)$.
Hence the induced map
\[
\Gamma_y(B_xB_s)^{\ell(y)} \to (\overline{B_xB_s})^{\ell(y)}
\]
is injective.

Let $c$ be the image of a generator of $\Delta_y$ under $\Delta_y
\into \Gamma_y B_y \subset B_y$. It projects to a generator $\ov{c}$ of the
one-dimensional space $(\overline{B_y})^{\ell(y)} \cong \RM$. 
The isomorphisms of the previous paragraph imply that \[ \iota' : \Hom(B_y, B_xB_s) \to
\ov{B_xB_s}^{\ell(y)} : f \mapsto \ov{f(c)}. \] is injective. In
addition, $hL(y)$ implies that $\rho^{\ell(y)} \cdot c_\bot$ also has nonzero image in
$(\ov{B_y})^{\ell(y)}$, and therefore is equal to $\ov{c}$ up to a non-zero scalar. Hence \[ \iota : \Hom(B_y, B_xB_s) \to \ov{B_xB_s}^{-\ell(y)} : f \mapsto \ov{f(c_\bot)}
\] is injective too. Finally, $\rho^{\ell(y) + 1}$ annihilates $\ov{B_y}$ and hence the image of $\iota$ is contained in the primitive subspace $P^{-\ell(y)}_\rho \subset
(\ov{B_xB_s})^{-\ell(y)}$. The first part of the theorem now follows.

Fix $f, g \in \Hom(B_y, B_xB_s)$. We have
\begin{align*}
N(f,g)_y^{x,s} & = \langle g^* ( f (c_\bot)), \rho^{\ell(y)} \cdot c_\bot \rangle_{\ov{B_y}} \\ & =
\langle f (c_\bot), \rho^{\ell(y)} \cdot  g(c_\bot) \rangle_{\ov{B_x B_s}} \\ & = (\iota(f), \iota(g))_\rho^{-\ell(y)} \end{align*} (the first equality follows from
\eqref{eq:ynorm}, the second by adjointness, and the third by
definition). \eqref{eq:scalarisometry} now follows.\end{proof}

Because the restriction of a definite form to a subspace stays non-degenerate, we have

\begin{cor} $HR(x,s)$ and $S(y)$ for all $y < xs$ implies $S(xs)$. \end{cor}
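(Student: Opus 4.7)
The strategy is to combine the Embedding Theorem (Theorem \ref{thm:embedding}) with the hypothesis $HR(x,s)$ to deduce that the local intersection form $(-,-)_y^{x,s}$ on $\Hom(B_y, B_x B_s)$ is definite (in particular non-degenerate) for every $y < xs$, and then invoke Soergel's criterion to upgrade non-degeneracy to Soergel's conjecture for $xs$.

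First, fix a reduced expression $\un{x}$ for $x$ and an embedding $B_x \summand BS(\un{x})$. This induces an invariant non-degenerate form on $B_x$ and on $B_x B_s \summand BS(\un{x}s)$, and also (after choosing, for each $y < xs$, an embedding $B_y \summand BS(\un{y})$) an invariant non-degenerate form on each $B_y$. These are precisely the data entering the Embedding Theorem. For each $y < xs$, that theorem yields an injective map
\[
\iota : \Hom(B_y, B_x B_s) \hookrightarrow P_\rho^{-\ell(y)} \subset (\ov{B_x B_s})^{-\ell(y)}
\]
which is an isometry, up to a strictly positive scalar $N$, between the local intersection form $(-,-)_y^{x,s}$ and the Lefschetz form $(-,-)_\rho^{-\ell(y)}$ on the primitive subspace.

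The hypothesis $HR(x,s)$ is, by definition, the assertion that $HR(\un{x},s)$ holds for every reduced expression of $x$, and in particular for the one we fixed. This tells us that the Lefschetz form $(-,-)_\rho^{-\ell(y)}$ is $(-1)^{(\ell(x)+1-\ell(y))/2}$-definite on $P_\rho^{-\ell(y)}$. Since the restriction of a definite form to any subspace remains definite (and in particular non-degenerate), transporting through $\iota$ and using that the factor $N$ is positive, the local intersection form $(-,-)_y^{x,s}$ is itself $(-1)^{(\ell(x)+1-\ell(y))/2}$-definite. In the notation of the introduction, this is precisely $S_\pm(y,x,s)$.

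Having established $S_\pm(y,x,s)$ for all $y < xs$, and given the standing assumption $S(<\!xs)$, the corollary follows from the implication \eqref{eq:2} recalled in \S\ref{ss:sclif}: non-degeneracy of $(-,-)_y^{x,s}$ for every $y < xs$ is, by Soergel's hom formula (Theorem \ref{thm:hom}) combined with \cite[Lemma 7.1(2)]{S3}, equivalent to $S(xs)$. There is no real obstacle here beyond verifying this last criterion; the content has all been packaged into the Embedding Theorem and the Hodge-Riemann hypothesis, and the only thing to check is that the signs and positive scalars track through so that ``definite'' in the primitive subspace translates to ``definite'' for the local intersection form.
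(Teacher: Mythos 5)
Your proposal is correct and follows essentially the same route as the paper: the paper's proof is the single observation that the restriction of a definite form to a subspace stays non-degenerate, applied via the isometric embedding $\iota$ of Theorem \ref{thm:embedding} to conclude that $HR(x,s)$ forces each local intersection form $(-,-)_y^{x,s}$ to be definite, whence $S(xs)$ follows by Soergel's criterion. Your write-up simply makes explicit the choices of embeddings and the sign bookkeeping that the paper leaves implicit.
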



\section{Hodge-Riemann bilinear relations} \label{sec:HR}

In this section we prove \eqref{eq:limit} from the introduction. We actually prove a more general version. Let us fix a (not necessarily reduced) expression $\un{x}$ and a
summand $B \summand BS(\un{x})$. On $B$ we have an invariant form induced from the intersection form on $BS(\un{x})$ and a Lefschetz operator induced by left multiplication by $\rho$.
Using the terminology of \S\ref{sec:lla}, for all $i \ge 0$ we get a Lefschetz form on
$(\overline{B})^{-i}$ given by
\[ (p,q)^{-i}_\rho = \Tr_\RM(\rho^i (pq)). \]

For all $\z \ge 0$ we consider the Lefschetz operator \[ L_\z := ( \rho \cdot - ) + \id_{B} ( \z \rho \cdot -) \] on $BB_s$. Here $ ( \rho \cdot - )$ denotes the operator of left
multiplication by $\rho$ and $\id_{B} ( \z \rho \cdot -)$ denotes the tensor product of the identity on $B$ and the operator of left multiplication by $\z\rho$ on $B_s$. In
this section $(-,-)^{-i}_\rho$ will always refer to the Lefschetz form on $\ov{B}$, while $(-,-)^{-i}_{L_\z}$ will refer to the Lefschetz form on $\ov{BB_s}$. Thus, $(-,-)^{-i}_{L_0}$ is
the Lefschetz form on $\ov{BB_s}$ induced by left multiplication by $\rho$. We abusively write $\Tr_\RM$ for the real valued trace on both $BS(\un{x})$ and $BS(\un{x}s)$.

\begin{thm} \label{thm:towards infinity} Suppose that $\ov{B}$
  satisfies hard Lefschetz and the Hodge-Riemann bilinear relations
  with the standard sign. Then for $\z \gg 0$, the induced action of $L_\z$ on
$\ov{BB_s}$ satisfies the hard Lefschetz theorem and the Hodge-Riemann
bilinear relations with the standard sign.\end{thm}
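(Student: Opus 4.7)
The key algebraic observations are that $\rho^{(1)} := (\rho \cdot -)\id_{B_s}$ and $\rho^{(2)} := \id_B (\rho \cdot -)$ are multiplications by commuting elements of the commutative ring $BS(\un{x}s)(-d-1)$ (for $d = \ell(\un{x})$), and that $(\rho^{(2)})^2 = 0$ on $\overline{BB_s}$: indeed a direct computation using \eqref{eq:demazure} and centrality of $c_s$ gives $\rho^2 c_{\id} = c_{\id}(s\rho)^2 + \partial_s(\rho) c_s(\rho + s\rho)$ and $\rho^2 c_s = c_s \rho^2$, all of which lie in $B_s R^+$. Consequently the binomial expansion truncates:
\begin{equation*}
L_\z^i = (\rho^{(1)})^i + i\z (\rho^{(1)})^{i-1} \rho^{(2)},
\end{equation*}
which is affine in $\z$. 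Choosing a homogeneous right-$R$-module basis $\{e_k\}$ of $B$ identifies $\overline{BB_s}$ as a graded $\RM$-vector space with $[c_{\id}] \oplus [c_s]$, where $[c_{\id}] \cong \overline{B}(1)$ and $[c_s] \cong \overline{B}(-1)$; the form on $\overline{BB_s}$ is then hyperbolic in the sense that $[c_s]$ is isotropic and the cross-pairing $[c_{\id}] \times [c_s]$ recovers $\langle -, - \rangle_{\overline{B}}$. In this decomposition $\rho^{(1)}$ is block lower-triangular with diagonal blocks both equal to $L_B := (\rho \cdot -)$ acting on $\overline{B}$, while $\rho^{(2)} = \rho(\alpha_s^\vee) \cdot \id$ as a map $[c_{\id}] \to [c_s]$.

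To prove hard Lefschetz at $\z \gg 0$: on $\overline{BB_s}^{-i} \cong \overline{B}^{-i+1} \oplus \overline{B}^{-i-1}$, the block form of $L_\z^i$ has both diagonal entries equal to $L_B^i$ and a lower-left entry dominated by $i\z\rho(\alpha_s^\vee) L_B^{i-1}$ for $\z$ large. Using the primitive decomposition of $\overline{B}$, one computes that $\Ker(L_B^i|_{\overline{B}^{-i+1}}) = P_{L_B}^{-(i-1)}$ and $\Coker(L_B^i|_{\overline{B}^{-i-1}}) \cong L_B^{i-1} P_{L_B}^{-(i-1)}$, and $hL$ on $\overline{B}$ ensures that $L_B^{i-1}$ restricts to an isomorphism $P_{L_B}^{-(i-1)} \to L_B^{i-1} P_{L_B}^{-(i-1)}$. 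Therefore the $\z$-dominant off-diagonal supplies the isomorphism between kernel and cokernel demanded by the general criterion for a $2 \times 2$ block matrix to be invertible, so $L_\z^i$ is an isomorphism for $\z$ sufficiently large.

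For the Hodge-Riemann bilinear relations: a direct computation shows that for $\z$ large, $P_{L_\z}^{-i}$ is parametrized by $(y_1^{(1)}, y_1^{(2)}) \in P_{L_B}^{-(i-1)} \oplus P_{L_B}^{-(i+1)}$ via $y_1 = y_1^{(1)} + L_B y_1^{(2)}$ and $y_2 = -(i+1)\z\rho(\alpha_s^\vee) y_1^{(2)} + O(1)$. Expanding $\langle x, L_\z^i y \rangle$ in block form, and crucially using the orthogonality of distinct primitive components of $\overline{B}$ under $\langle -,- \rangle_{\overline{B}}$ (which follows from invariance of the form together with the vanishing $L_B^m p = 0$ for $p \in P_{L_B}^{-j}$ and $m > j$), the $O(\z)$ part of the Lefschetz form on $P_{L_\z}^{-i}$ collapses to
\begin{equation*}
\z \rho(\alpha_s^\vee) \bigl[ i \cdot (-,-)^{-(i-1)}_{L_B} - (i+2) \cdot (-,-)^{-(i+1)}_{L_B} \bigr]
\end{equation*}
on $P_{L_B}^{-(i-1)} \oplus P_{L_B}^{-(i+1)}$. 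By $HR$ on $\overline{B}$ with standard sign, both summands acquire the common sign $(-1)^{(N-i+1)/2}$ after these scalar multiplications (with $-N$ the bottom degree of $\overline{B}$), which is precisely the standard HR sign for $\overline{BB_s}$ (whose bottom degree is $-N-1$). The main obstacle is to control the $O(1)$ sub-leading corrections to the Lefschetz form and to the parametrization of $P_{L_\z}^{-i}$; since the leading-order form is non-degenerate and definite of size $O(\z)$, Lemma \ref{lem:family} ensures the signature is preserved under the $O(1)$ perturbation for $\z$ sufficiently large, which yields $HR$ with standard sign.
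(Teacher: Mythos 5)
Your proposal is correct, and it reaches the conclusion by a genuinely different route in the Hodge--Riemann step. The shared core is the decomposition of $\ov{BB_s}$ via $c_{\id}$ and $c_s$, the isotropy of the $c_s$-part, and the observation that the $\z$-leading term of the Lefschetz form is (a positive multiple of) the primitive Lefschetz form on $\ov{B}$; your exact truncation $L_\z^i = (\rho^{(1)})^i + i\z(\rho^{(1)})^{i-1}\rho^{(2)}$, coming from $(\rho^{(2)})^2=0$ on $\ov{BB_s}$, is the operator-level version of the paper's computation that only the $j=0,1$ binomial terms survive under $\Tr_\RM$. The divergence is in how HR is extracted: the paper deliberately avoids identifying $P^{-i}_{L_\z}$ by first proving a signature lemma (Lemma \ref{lem:hrsig}) reducing HR for $\ov{BB_s}$ to the statement that the signature of the Lefschetz form on \emph{all} of $(\ov{BB_s})^{-i}$ equals that of the primitive form on $(\ov{B})^{-i+1}$, and then reads this off from a Gram matrix with a hyperbolic block $J$ and a block $Q_\z \sim \z\, i\rho(\a_s^\vee) Q$. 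You instead compute $P^{-i}_{L_\z}$ explicitly as a graph over $P^{-(i-1)}_{L_B}\oplus P^{-(i+1)}_{L_B}$ and verify definiteness there directly; your coefficients $i$ and $-(i+2)$ and the resulting signs are correct, and your block-matrix kernel argument for hard Lefschetz (valid in fact for every $\z>0$, not only $\z\gg 0$) is cleaner than deducing it from non-degeneracy of the Gram matrix. What the paper's route buys is robustness (no need to solve for the $\z$-dependent primitive subspace); what yours buys is explicitness and a sharper hL statement. Two cosmetic points: your ``$+O(1)$'' in the formula for $y_2$ is actually exactly zero, since $\rho^{(2)}$ kills the $c_s$-part of $\ov{BB_s}$ and $L_B^{i+1}$ is invertible on $\ov{B}^{-(i+1)}$; and the final perturbation step is not really an application of Lemma \ref{lem:family} (which presupposes non-degeneracy along the whole family) but the elementary fact that definiteness is an open condition, so $A+\z B$ with $B$ definite is definite for $\z\gg 0$ --- the same limit the paper takes for $\tfrac{1}{\z}Q_\z$.
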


The following lemma reduces this theorem to a statement relating the signatures of the forms on $\ov{B}$ and $\ov{B B_s}$:

\begin{lem} \label{lem:hrsig} Let $V$ and $W$ be two finite
  dimensional graded vector spaces, equipped with graded non-degenerate
  symmetric forms and Lefschetz operators satisfying the hard
  Lefschetz theorem. Assume that $W$ is even or odd and that $\gdim V
  = (v+v^{-1}) \gdim W$, so that $V$ is odd or even. Suppose that $W$ satisfies the Hodge-Riemann bilinear relations with the standard sign.
Then $V$ satisfies the Hodge-Riemann bilinear
relations with the standard sign if and only if for all $i \ge 0$ the signature of the
Lefschetz form on the primitive subspace $P^{-i+1} \subset W^{-i+1}$ is
equal to the signature of the Lefschetz form on all of
$V^{-i}$. (By convention $P^1  = 0$.)\end{lem}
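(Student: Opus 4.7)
Plan: Hodge--Riemann with standard sign for $V$ is really a condition on the signatures of the Lefschetz forms on each $V^{-i}$, so the lemma reduces to a signature identity that follows directly from the dimensional hypothesis $\gdim V = (v+v^{-1})\gdim W$. The argument splits into two steps: a triangular/inversion step that converts HR into a signature-matching condition, and a telescoping dimension calculation.

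First I would reduce HR to a statement about signatures of Lefschetz forms on the full spaces $V^{-i}$. By hard Lefschetz on $V$, each $V^{-i}$ admits the primitive decomposition $V^{-i} = \bigoplus_{j \ge 0} L^j P_V^{-i-2j}$, which is orthogonal under $(-,-)_L^{-i}$. The identity $(L^j \alpha, L^j \beta)_L^{-i} = (\alpha, \beta)_L^{-i-2j}$ gives
\[
\mathrm{sig}(V^{-i}) = \sum_{j \ge 0} \mathrm{sig}\bigl((-,-)_L^{-i-2j}|_{P_V^{-i-2j}}\bigr),
\]
which inverts triangularly to $\mathrm{sig}(P_V^{-i}) = \mathrm{sig}(V^{-i}) - \mathrm{sig}(V^{-i-2})$. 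Since $|\mathrm{sig}| \le \dim$ with equality if and only if the form is definite, HR for $V$ with standard sign is equivalent to $\mathrm{sig}(V^{-i})$ taking for all $i \ge 0$ the specific values predicted by HR. The lemma then reduces to showing that the predicted signature of $V^{-i}$ equals $\mathrm{sig}(P_W^{-i+1})$.

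Next I would compute the predicted value. Let $d_V$, $d_W$ denote the top degrees; the parity hypothesis together with the dimensional identity gives $d_V = d_W + 1$. Under HR with standard sign, the form on $P_V^{-k}$ is $(-1)^{(d_V-k)/2}$-definite, so the predicted signature of $V^{-i}$ equals $(-1)^{(d_V-i)/2} \sum_{j \ge 0} (-1)^j \dim P_V^{-i-2j}$. From $\gdim V = (v+v^{-1})\gdim W$ and $\dim P_H^{-k} = \dim H^{-k} - \dim H^{-k-2}$ one checks
\[
\dim P_V^{-k} = \dim P_W^{-k+1} + \dim P_W^{-k-1},
\]
and substituting makes the alternating sum telescope to $\dim P_W^{-i+1}$. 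Since $(-1)^{(d_V-i)/2} = (-1)^{(d_W-i+1)/2}$ is precisely the HR sign on $P_W^{-i+1}$ coming from HR for $W$ with standard sign, the predicted value of $\mathrm{sig}(V^{-i})$ is exactly $\mathrm{sig}(P_W^{-i+1})$, as required.

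The case $i = 0$ is handled by the stated convention $P_W^1 = 0$ together with $V^0 = 0$ (forced by the parity hypothesis). The main obstacle, such as it is, is the bookkeeping with signs and degrees; the algebraic content reduces to the telescoping identity above.
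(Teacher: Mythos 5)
Your proposal is correct and follows essentially the same route as the paper: reduce Hodge--Riemann with the standard sign to the statement that the Lefschetz form on each $V^{-i}$ has a prescribed signature (via the orthogonal primitive decomposition and the fact that $|\mathrm{sig}| \le \dim$ with equality exactly for definite forms), then use the identity $\dim P_V^{-k} = \dim P_W^{-k+1} + \dim P_W^{-k-1}$ coming from $\gdim V = (v+v^{-1})\gdim W$ to telescope the prescribed value down to the signature of the Lefschetz form on $P_W^{-i+1}$, with matching sign since the lowest degrees differ by one. The only (inconsequential) slip is the closing remark that $V^0 = 0$ is forced by parity --- this fails when $W$ is odd and $V$ is even --- but your main telescoping argument already covers $i = 0$, since hard Lefschetz for $W$ gives $\dim P_V^0 = \dim P_W^{-1}$, consistent with the convention $P_W^1 = 0$.
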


\begin{proof} Let $\ell \ge 0$ be such that $W^{-\ell}$ is the lowest
  non-zero degree of $W$. For $j \in \ZM$, write $v_j := \dim V^j$ and
  $w_j := \dim W^j$ for the Betti numbers of $V$ and $W$. For $j \ge
  0$ write $p_{-j} := v_{-j} - v_{-j-2}$ for the dimension of the
  primitive subspace $P^{-j} \subset V^{-j}$. Because  $\gdim V =
  (v+v^{-1}) \gdim W$ the lowest non-zero degree of $V$ is $-\ell - 1$
  and we have $v_{-j} = w_{-j + 1} + w_{-j-1}$. Hence, for all $j \ge 0$ we have
\[
p_{-j} = w_{-j+1}-w_{-j-3}.
\]
Now $V$ satisfies the
  Hodge-Riemann bilinear relations with the standard sign 
if and only if, for all $j \ge -1$
  the signature of the Lefschetz form on $V^{-j-1}$ is equal to
\begin{gather*}
(-1)^{(j+1-(\ell+1))/2}(p_{-j-1} - p_{-j-3} + p_{-j-5} - p_{-j-7} +
\dots ) = \\
= (-1)^{(j-\ell)/2}( (w_{-j} - w_{-j-4}) -  (w_{-j-2} - w_{-j-6}) +
(w_{-j-4} - w_{-j-8}) - \dots  )\\
= (-1)^{(j-\ell)/2}(w_{-j} - w_{-j-2}).
\end{gather*}
The last term is the signature of the Lefschetz form on the
primitive subspace $P^{-j} \subset W^{-j}$ by the Hodge-Riemann
bilinear relations. The lemma now follows.
\end{proof}


Clearly, the Lemma will apply to $W = \ov{B}$ and $V = \ov{BB_s}$, so long as $V$ satisfies hard Lefschetz. The
proof below establishes a statement about signatures. The essential argument is to show that, as $\z \to \infty$, the form on $\ov{BB_s}$ tends to the ``product" of the forms on $\ov{B}$ and on $\ov{B_s}$.

\begin{proof}[Proof of Theorem \ref{thm:towards infinity}] 
Recall from \S\ref{sec:invariant forms} the maps $\a$ and $\b$ from $B$ to $B B_s$, and the formulae
\eqref{alphaalpha}, \eqref{alphabeta} and \eqref{betabeta} which control the invariant form on $B B_s$. As a reminder, for $x \in B^{{-i}+1}$ and $y \in B^{{-i}-1}$ we have \[ \a(x) :=
xc_{\id} \quad \text{and} \quad \b(y) := yc_s \] in $(BB_s)^{-i}$.

We are interested in the $\RM$-valued form on $\ov{BB_s}$. It is immediate from \eqref{betabeta} that two elements in the image of $\b$ are orthogonal with respect to $\langle -,-
\rangle_{\ov{BB_s}}$, because the positive degree polynomial $\a_s$ appears on the right. For similar reasons, $L_\z = L_0$ when applied to an element in the image of $\b$, because left and
right multiplication by $\z \rho$ agree on $c_s \in B_s$. Therefore
\begin{equation} \label{eq:betabeta} ( \beta(y), \beta(y') )^{-i}_{L_\z} = 0 \end{equation}
and
\begin{gather} \label{eq:alphabeta}
\begin{array}{rl}
	( \alpha(x), \beta(y) )^{-i}_{L_\z} & = ( \alpha(x), \beta(y) )^{-i}_{L_0}
	\\ & = \Tr_{\RM}(\rho^i (xy) c_s) \\ & = ( x, \rho y)^{-i+1}_\rho
\end{array}
\end{gather}
This second equation relates the form on $(\ov{BB_s})^{-i}$ to the
form on $(\ov{B})^{-i+1}$. The only ``difficult'' pairings are of the
form $( \a(x), \a(x') )^{-i}_{L_\z}$. We will have more to say about
these below. 

Now fix $i \ge 0$ and choose elements $e_1, \ldots, e_n \in B^{-i-1}$ which project to an orthogonal basis of $(\ov{B})^{-i-1}$. Choose elements $p_1,\ldots,p_m \in B^{-i+1}$ which project to an orthogonal basis
of the primitive subspace $P^{-i+1}_\rho \subset (\ov{B})^{-i+1}$. Then \[\rho e_1, \dots, \rho e_n,p_1, \dots, p_m \] project to an orthogonal basis for $(\ov{B})^{{-i}+1}$. It follows that \[
\alpha(\rho e_1), \dots, \alpha(\rho e_n), \beta(e_1) \dots, \beta(e_n), \alpha(p_1), \dots, \alpha(p_m) \] project to a basis of $(\ov{BB_s})^{-i}$.

With this choice of basis, equations \eqref{eq:betabeta} and \eqref{eq:alphabeta} imply that the
Gram matrix of the form $( -, - )^{-i}_{L_\z}$ has the form
\[
M^{-i}_\z := \left ( \begin{array}{ccc}
* & J & * \\
J & 0 & 0 \\
* & 0 & Q_\z \end{array} \right )
\]
where $J$ is a non-degenerate diagonal matrix. We have not yet
computed $Q_\z$ or the $*$'s. The determinant of $M^{-i}_\z$ only
depends on the entries of $J$ and $Q_\z$. Hence $M^{-i}_\z$ is
non-degenerate if and only if 
$Q_\z$ is, in which case we can find a path in the space of
real non-degenerate symmetric matrices to the matrix
\[
M := \left ( \begin{array}{ccc}
0 & J & 0 \\
J & 0 & 0 \\
0 & 0 & Q_\z \end{array} \right )
\]
and we can conclude that the signature of $M^{-i}_\z$ is equal to the
signature of $Q_\z$.\footnote{More formally, let $Sym_n^{\det \ne 0}$
  denote the space of real non-degenerate symmetric matrices, with its
  Euclidean topology. We can find a path $t : [0,1] \to
  Sym_n^{\det \ne 0}$ such that $t(1) = M^{-i}_\z$ and $t(0) = M$. Using
  that the signature is constant on connected components of
  $Sym_n^{\det \ne 0}$ we conclude that the signatures of $M$ and
  $M^{-i}_\z$ coincide. Finally, the signatures of $M$ and $Q_\z$ are
  easily seen to agree.}

We claim that, for $\z \gg 0$, $Q_\z$ is non-degenerate and has
signature equal to the signature of $(-,-)^{-i+1}_\rho$ on
$P^{-i+1}_\rho \subset \ov{B}^{-i+1}$. If this is true then $L_{\z}$
satisfies hard Lefschetz, and Lemma \ref{lem:hrsig} will conclude the proof.

Firstly, if $i = 0$ then $m = 0$ and the result follows. Hence we may
assume $i > 0$. Let $p, q \in B^{-i+1}$. We have
\begin{align*}
(\a(p),\a(q))^{-i}_{L_\z} &= \Tr_{\RM}(L_\z^i((pq)c_\id))=\Tr_{\RM} \left
  (\sum_{j=0}^i{i \choose j} \rho^{i-j} (pq) (\z\rho)^jc_\id \right)
\end{align*}
By \eqref{eq:demazure} we have for $j \ge 1$
\[
\rho^{i-j} (pq) (\z\rho)^jc_\id  = \rho^{i-j} (pq) c_s
\cdot \partial_s((\z\rho)^j) + \rho^{i-j} (pq) c_{\id} \cdot s(\zeta\rho)^j.
\]
Applying $\Tr_{\RM}$ we obtain (again for $j \ge 1$)
\[
\Tr_{\RM}(\rho^{i-j} (pq) (\z\rho)^jc_\id) = \begin{cases}
\z \rho(\alpha_s^\vee) \Tr_{\RM}(\rho^{i-1} (pq)) & \text{if $j = 1$} \\
0 & \text{otherwise.}
\end{cases}
\]
Hence
\[
(\a(p),\a(q))^{-i}_{L_\z} = \Tr_{\RM}(\rho^i(pq)c_\id) + \z i\rho(\alpha_s^\vee) (p,q)_\rho^{-i+1}.
\]
Note that the first term is independent of $\z$. It follows that
\[ \lim_{\z \to \infty} \frac{1}{\z} Q_\z = i \rho(\a_s^\vee) \cdot Q \]
where $Q$ is the matrix $( (p_i,p_j)_\rho^{-i+1} )_{1 \le i, j \le n}$.
Now, $\ov{B}$ satisfies the Hodge-Riemann bilinear relations, and hence $Q$ is definite. It follows that $Q_\z$ is too, for $\z \gg 0$, and has the same
signature as $Q$ ($i$, $\z$ and $\rho(\a_s^\vee)$ are all strictly positive). The theorem now follows. \end{proof}

{The upshot of Theorem \ref{thm:towards infinity} is the following corollary.

\begin{cor} If $HR(\un{x})$ holds then $hL(\un{x},s)_\z$ for all $\z \ge 0$ implies
  $HR(\un{x},s)_\z$ for all $\z \ge 0$. \end{cor}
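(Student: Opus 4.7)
The plan is to apply Theorem \ref{thm:towards infinity} as an ``anchor'' at $\z \gg 0$ and then propagate the Hodge-Riemann bilinear relations down to every $\z \in [0, \infty)$ using the family argument of Lemma \ref{lem:family}. Fix a reduced expression $\un{x}$ and an embedding $B_x \summand BS(\un{x})$; write $\langle -, -\rangle_{B_x}$ for the restricted intersection form. By $HR(\un{x})$, $\ov{B_x}$ equipped with left multiplication by $\rho$ satisfies hard Lefschetz and the Hodge-Riemann bilinear relations. A sign check on the primitive subspace in the lowest non-zero degree $-\ell(x)$ (where the exponent $(i-\ell(x))/2$ vanishes at $i = \ell(x)$) shows that these relations hold with the \emph{standard sign} in the terminology of \S\ref{sec:lla}.

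Now I would apply Theorem \ref{thm:towards infinity} to $B = B_x$, viewed as a summand of $BS(\un{x})$. Its conclusion is that there exists $N > 0$ such that for every $\z \ge N$, the operator $L_\z$ acting on $\ov{B_x B_s}$ (equipped with the induced invariant form coming from the embedding $B_x B_s \summand BS(\un{x}s)$) satisfies the hard Lefschetz theorem and the Hodge-Riemann bilinear relations with the standard sign. Again one checks that the standard sign matches the sign convention in the statement of $HR(\un{x},s)_\z$: the lowest non-zero degree of $\ov{B_x B_s}$ is $-\ell(x)-1$, and at $i = \ell(x)+1$ the exponent $(\ell(x)+1-i)/2$ vanishes, giving positive-definiteness. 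Hence $HR(\un{x},s)_\z$ holds for all $\z \ge N$.

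To finish, I would fix any $\z_0 \in [0, \infty)$, choose $N' \ge \max(\z_0, N)$, and consider the continuous map
\[
\phi : [0, N'] \longrightarrow \Hom\bigl(\ov{B_xB_s}, \ov{B_xB_s}(2)\bigr), \qquad \phi(\z) = L_\z,
\]
which is visibly affine in $\z$. By the hypothesis $hL(\un{x},s)_\z$, each $\phi(\z)$ satisfies the hard Lefschetz theorem on $[0, N']$; and $\phi(N')$ satisfies the Hodge-Riemann bilinear relations by the previous paragraph. Lemma \ref{lem:family} then implies $HR(\un{x},s)_\z$ at every point of $[0, N']$, and in particular at $\z_0$. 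Since the embedding $B_x \summand BS(\un{x})$ was arbitrary and $\z_0 \ge 0$ was arbitrary, we conclude $HR(\un{x},s)_\z$ for all $\z \ge 0$.

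There is no real obstacle here: the corollary is essentially the combination of the ``limiting argument'' (Lemma \ref{lem:family}) with the asymptotic computation already performed in Theorem \ref{thm:towards infinity}. The only mildly subtle points are verifying the sign conventions so that the ``standard sign'' output of Theorem \ref{thm:towards infinity} is exactly the sign prescribed in the definition of $HR(\un{x},s)_\z$, and noting that $\phi(\z) = L_\z$ is manifestly continuous (indeed polynomial) in $\z$, so that Lemma \ref{lem:family} applies verbatim on any closed interval $[0, N']$.
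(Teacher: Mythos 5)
Your proposal is correct and follows exactly the paper's argument: anchor the Hodge--Riemann relations at $\z \gg 0$ via Theorem \ref{thm:towards infinity}, then use the hard Lefschetz hypothesis for all $\z \ge 0$ together with the continuity-of-signature argument of Lemma \ref{lem:family} to propagate them down to every $\z$. The extra care you take with the standard-sign convention and with restricting to a closed interval $[0,N']$ is a sound (if slightly more verbose) rendering of the same two-line proof in the paper.
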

	
\begin{proof} By Theorem \ref{thm:towards infinity}, we have
  $HR(\un{x},s)_\z$ for some $\z \gg 0$. By Lemma \ref{lem:family} we
  have $HR(\un{x},s)_\z$ for all $\z \ge 0$. \end{proof}

All that remains is to prove hard Lefschetz for the family $L_\z$ of
Lefschetz operators. This task occupies the rest of the paper.


\section{Hard Lefschetz for Soergel bimodules} \label{sec:HL}

In this section we establish the hard Lefschetz theorem for Soergel bimodules using Rouquier complexes. Although the basic idea is simple, the details are somewhat complicated. Before
giving the details we give a brief motivational sketch:

Let us first recall a key fact from Hodge theory: the weak Lefschetz theorem together with the Hodge-Riemann bilinear relations in dimension $n-1$ imply the hard Lefschetz theorem in
dimension $n$. Let $X$ denote a smooth projective variety, and $X_H$ a
general hyperplane section and $r : X_H \hookrightarrow X$ the inclusion. A key point in the proof is the observation that one can factor the Lefschetz operator as the composition of the restriction map $r^* : H^*(X) \to H^*(X_H)$ and its dual $r_* : H^*(X_H)
\to H^{*+2}(X)$. The weak Lefschetz theorem implies that $r^*$ is injective in degrees $\le \dim_\CM X - 1$ and one can then use Lemma \ref{lem: weak lefschetz substitute} to deduce the
hard Lefschetz theorem for $H^*(X)$ from the Hodge-Riemann bilinear
relations for $H^*(X_H)$.

The weak Lefschetz theorem actually gives a situation stronger than that of Lemma \ref{lem: weak lefschetz substitute} because $r^*$ (resp. $r_*$) is an isomorphism in degrees $\le
\dim_\CM X - 2$ (resp. $\ge \dim_{\CM} X$), which can be used to deduce the Hodge-Riemann bilinear relations for $H^*(X)$ in all degrees except $\dim_\CM X$. This aspect of
the proof is not replicated in this paper.

A major initial hurdle in the setting of Soergel bimodules is the
apparent absence of the weak Lefschetz theorem. Indeed, even if geometric tools are available, taking a general
hyperplane section in a Bott-Samelson resolution or flag variety leaves the world of
varieties whose cohomology admits a simple combinatorial description.

The first key observation is that for any expression $\un{x}$,
left multiplication by $\rho$ on $\ov{BS(\un{x})}$ (our substitute for a Lefschetz operator) still admits a factorization (see \S\ref{sec:fact-lefsch-oper}) \[ \ov{BS(\un{x})}
\stackrel{\phi}{\longto} \bigoplus \ov{BS(\un{x}_{\hat{i}})}(1) \stackrel{\chi}{\longto} \ov{BS(\un{x})}(2). \] However, the modules appearing above will generally not satisfy hard
Lefschetz, because one has no control over the shifts of indecomposable Soergel bimodules that may occur.

The second key observation is that $\phi$ (and $\chi$) are (up to some positive scalars) differentials on Rouquier complexes. One can then use homological algebra to replace
$\ov{BS(\un{x})} \stackrel{\phi}{\longto} \bigoplus
\ov{BS(\un{x}_{\hat{i}})}(1) \to \dots$ by a minimal subcomplex without affecting exactness properties. It is this subcomplex that serves as a
replacement for the weak Lefschetz theorem, and allows us to deduce hard Lefschetz.

\subsection{Complexes and their minimal complexes} \label{sec:minimal complexes}
Let $C^b(\BC)$
denote the category of bounded complexes of Soergel bimodules (all differentials are required to be of degree zero) and let
$K^b(\BC)$ denote its homotopy category. Because we already use right
indices to indicate the degree in the grading we use left indices to
indicate the cohomological degree. In other words, an object $F \in
C^b(\BC)$ looks like
\[
\cdots \to {}^iF \stackrel{d}{\longto} {}^{i+1}F \to \cdots
\]
with ${}^iF \in \BC$ and $d$ a morphism in $\BC$. We regard $\BC$ as a full subcategory
of $C^b(\BC)$ and $K^b(\BC)$ consisting of complexes concentrated in
degree 0. As with bimodules we write $F' \summand F$ to mean that $F'$
is a direct summand (as complexes) of the complex $F$.

Let $\rad(\BC) \subset\BC$ denote the the radical of $\BC$ (see e.g.
\cite[\S1.8]{Kr}). It is an ideal of the category $\BC$ and we write
$\rad(\BC)(B, B') \subset \Hom(B,B')$ for the corresponding subspace,
for any $B, B' \in \BC$. On may
show \cite[Proposition 1.8.1]{Kr} that $\rad(\BC)(B,B)
\subset \End(B)$ coincides with the Jacobson radical
$J \End(B) \subset \End(B)$, for any $B \in \BC$.
Because the endomorphism ring of $B$ is a finite
dimensional $\RM$-algebra (remember that morphisms in $\BC$ are
assumed to be of degree zero), $\End B / J \End B$ is a semi-simple
$\RM$-algebra. We conclude that $\BC^{ss}
:= \BC/\rad(\BC)$ is semi-simple.

Given any indecomposable Soergel bimodule $B_x$, $\End B_x / J \End
B_x = \RM$. (For general reasons $\End B_x / J \End
B_x$ is a division algebra over $\RM$. However one always has a
surjection $\End B_x \onto \RM$ (see the proof of \cite[Satz
6.14]{S3}), and hence $\End B_x / J \End
B_x = \RM$.)  We conclude that the images of $B_x(i)$ for $x \in W$ and $i \in \ZM$
in $\BC^{ss}$ give a complete set of pairwise
non-isomorphic simple objects, all of whose endomorphism rings are isomorphic to $\RM$. We denote by $q : \BC \to \BC^{ss}$ the quotient functor. One may check that $f : B \to B'$ is an
isomorphism if and only if $q(f)$ is.

Now consider a complex $F \in C^b(\BC)$. We say that $F$ is
\emph{minimal} if all differentials on $q(F) \in C^b(\BC^{ss})$ are
zero. This is equivalent to requiring that 
$q(F)$ contain no contractible direct summands, which by the
isomorphism lifting statement, is equivalent to requiring that $F$
itself has no contractible direct summands.\footnote{If $F \in C^b(\BC)$ is a
complex, and a differential $d : {}^iF = M \oplus B \to {}^{i+1}F = M'
\oplus B' $ has the form
\[
\left ( \begin{matrix} \a & \b \\ \g & iso \end{matrix} \right ).
\]
for some isomorphism $iso : B \to B'$ then one can choose new decompositions ${}^iF = M \oplus B$ and ${}^{i+1}F = M' \oplus B'$
such that $d$ is a diagonal matrix, with entries $\a' : M \to M'$ and
$iso : B \to B'$. Hence $F$ is homotopic to a complex $F'$ with the
contractible summand $B \simto B'$ removed.
}

Given any complex $F \in C^b(\BC)$ there exists a direct summand $F_{\min}
\summand F$ such that $F_{\min}$ is minimal and such that the
inclusion $F_{\min} \to F$ is an isomorphism in $K^b(\BC)$. We call
such a summand a \emph{minimal subcomplex}.
Any two minimal subcomplexes are isomorphic as
complexes. (If $f : F \to G$ is a homotopy isomorphism
between minimal complexes then $q(f)$ is a homotopy isomorphism 
between complexes with trivial differential. It follows that $q(f)$,
and hence $f$, is an isomorphism
 of complexes.)

\subsection{The perverse filtration on bimodules}

A Soergel bimodule $B$ is \emph{perverse} if $\ch(B) = \sum a_x
\un{H}_x$ with $a_x \in \ZM_{\ge 0}$. A Soergel bimodule $B$ is
\emph{$p$-split} if each indecomposable summand of $B$ is isomorphic
to $B'(m)$ for some $m \in \ZM$ and perverse Soergel bimodule $B'$. Any summand of
a perverse (resp. $p$-split) Soergel bimodule is perverse
(resp. $p$-split) (use that $\overline{\ch(B_x)} = \ch(B_x)$ for any
$x \in W$ and that the character of any Soergel bimodule is positive
in the standard basis).

If $B_1$ and $B_2$ are perverse then Soergel's hom formula
(Theorem \ref{thm:hom}) implies that \begin{equation} \label{eq:perverse vanishing} \Hom(B_1, B_2(-i)) = 0 \quad \text{for $i > 0$}. \end{equation}

Let $B$ be a $p$-split Soergel bimodule and choose a decomposition
\begin{equation}
  \label{eq:p-split decomp}
  B \simto \bigoplus B_x^{\oplus m_{x,i}}(i)
\end{equation}
of $B$ as a direct sum of indecomposable bimodules. Because $B$ is
assumed $p$-split, we know that if $m_{x,i} \ne 0$ then $B_x$ is
perverse. We define the \emph{perverse filtration} to be the
filtration
\[
\ptau_{\le j} B := \bigoplus_{i \ge -j} B_x^{\oplus m_{x,i}}(i).
\]
(The more geometrically minded reader might prefer
  ${}^p \tau_{\le j}$.) Using \eqref{eq:perverse vanishing} one can show that this
filtration does not depend on the choice of decomposition
\eqref{eq:p-split decomp} and is preserved (possibly non-strictly) by
all maps between Soergel bimodules. Of course this filtration always
splits, however the splitting is not canonical in general.

We set $\ptau_{< j} := \ptau_{\le j-1}$ and define
\[
\ptau_{\ge j} B := B/ \ptau_{<j} B
\]
and
\[
 \HC ^j(B) := \ptau_{\le j}(B)/\ptau_{< j}(B)(j).
\]
One can check $\ptau_{\le j}(-)$, $\ptau_{\ge j}(-)$ and $\HC^j(-)$
define endofunctors on the full subcategory of $p$-split Soergel
bimodules.

\subsection{The perverse filtration on complexes}
Let ${}^p K^b(\BC)^{\ge 0}$ denote the full subcategory of
$K^b(\BC)$ with objects those complexes which are isomorphic to
complexes $F$ such that
\begin{enumerate}
\item each term of $F$ is $p$-split;
\item $\ptau_{<-i}{}^iF = 0$ for all $i\in \ZM$.
\end{enumerate}
Similarly, we define ${}^p K^b(\BC)^{\le 0}$ to be the full
subcategory of complexes which are isomorphic to complexes $F$ such that
\begin{enumerate}
\item each term of $F$ is $p$-split;
\item ${}^iF = \ptau_{\le -i}{}^iF$ for all $i \in \ZM$.
\end{enumerate}
Alternatively, $F$ belongs to ${}^p K^b(\BC)^{\le 0}$
(resp. ${}^p K^b(\BC)^{\ge 0}$) if and only if its minimal complex
satisfies the conditions above.

In other words, if a minimal complex is in ${}^p K^b(\BC)^{\ge 0}$ then an indecomposable summand in cohomological degree $0$ has the form $B_x(k)$ for $k \le 0$, an
indecomposable summand in cohomological degree $1$ has the form $B_x(k)$ for $k \le 1$, etc.

\begin{lem} \label{lem:middle term} Let $F' \to F \to F'' \triright$ be a distinguished triangle in $K^b(\BC)$. If $F', F'' \in {}^pK^b(\BC)^{\ge 0}$
then $F \in {}^pK^b(\BC)^{\ge 0}$. Similarly, if $F', F'' \in {}^pK^b(\BC)^{\le 0}$ then $F \in {}^pK^b(\BC)^{\le 0}$. \end{lem}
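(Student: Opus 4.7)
The plan is to reduce to the case where $F$ is literally a mapping cone of a morphism between chosen representatives of $F'$ and $F''$, and then check the two defining conditions term-by-term using the fact that the perverse filtration is additive on $p$-split bimodules.

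First, by the definition of ${}^pK^b(\BC)^{\ge 0}$, I may replace $F'$ and $F''$ inside their homotopy-equivalence classes by complexes that literally satisfy (1) and (2). Since the triangle $F' \to F \to F'' \triright$ is distinguished in $K^b(\BC)$, up to homotopy equivalence $F$ is isomorphic to the mapping cone $\Cone(g)$ of some representative $g : F''[-1] \to F'$ of the connecting morphism. The terms of this cone are computed by the standard formula
\[
 {}^iF = {}^i\Cone(g) = {}^iF' \oplus {}^iF'',
\]
independently of $g$. A direct sum of $p$-split Soergel bimodules is $p$-split (indecomposable summands of a direct sum are indecomposable summands of one of the factors), so condition (1) is immediate.

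For condition (2), I would use that $\ptau_{<-i}$ is an endofunctor on the category of $p$-split Soergel bimodules and that, since it is defined summand-by-summand in a decomposition into indecomposables, it commutes with direct sums. Hence
\[
 \ptau_{<-i}\, {}^iF \;=\; \ptau_{<-i}\,{}^iF' \;\oplus\; \ptau_{<-i}\,{}^iF'' \;=\; 0 \oplus 0 \;=\; 0,
\]
using (2) for $F'$ and $F''$. Thus the cone representative of $F$ satisfies (1) and (2), and $F \in {}^pK^b(\BC)^{\ge 0}$. The case ${}^pK^b(\BC)^{\le 0}$ is identical, replacing $\ptau_{<-i}$ with $\ptau_{>-i}$ (equivalently, using the condition ${}^iF = \ptau_{\le -i}{}^iF$), since the argument is purely formal in the additivity of the perverse filtration.

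The only non-routine point — and it is the only place a mistake could creep in — is the additivity of the perverse filtration, which I would spell out using the already-noted fact that $\ptau_{\le j}$ is well-defined independent of the chosen decomposition \eqref{eq:p-split decomp}: once one knows the filtration does not depend on the decomposition, summing decompositions for ${}^iF'$ and ${}^iF''$ to get one for ${}^iF$ immediately gives the desired compatibility. There is no genuine obstacle; the statement is the standard extension-closure of the two ``aisles'' of the candidate perverse $t$-structure on $K^b(\BC)$.
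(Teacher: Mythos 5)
Your proof is correct and is essentially the paper's own argument: the paper likewise rotates the triangle to realize $F$ as the cone over a map $F''[-1] \to F'$, notes that the $i$-th term of the cone is ${}^iF'' \oplus {}^iF'$, and concludes from $\ptau_{<-i}({}^iF'' \oplus {}^iF') = 0$. Your additional remarks on why $\ptau_{<-i}$ commutes with direct sums just make explicit what the paper leaves implicit.
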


\begin{proof} We prove the first statement; the second statement follows by an identical argument. We may assume that ${}^iF'$ and ${}^iF''$ are $p$-split and that $\ptau_{<
-i}{}^iF' = \ptau_{<-i}{}^iF'' =0$ for all $i \in \ZM$. Turning the triangle we see that $F$ is isomorphic to the cone over a map $F''[-1] \to F'$. This cone has $i^{th}$
term ${}^iF'' \oplus {}^iF$. The result follows because
$\ptau_{<-i}({}^iF'' \oplus {}^iF') = 0$ for all $i \in \ZM$. \end{proof}

\begin{remark} Once one has proven Soergel's conjecture one may show
  that (${}^p K^b(\BC)^{\le 0}$, ${}^p K^b(\BC)^{\ge 0}$) gives a
  non-degenerate $t$-structure on $K^b(\BC)$. Its
heart can be thought of as a category of mixed equivariant perverse sheaves on the (possibly non-existent) flag variety associated to $(W,S)$. \end{remark}

\subsection{Rouquier complexes}

The monoidal structure on $\BC$ induces a monoidal structure on
$K^b(\BC)$ (total complex of tensor product of complexes) which we denote by juxtaposition.
Given a distinguished triangle $F' \to F \to F'' \triright$
and $G \in K^b(\BC)$ the triangle \[ F' G \to F G \to F'' G \triright \] is also distinguished.

For $s \in S$ consider the complex \[ F_s:= 0 \to B_s \to R(1) \to 0 \] where $B_s$ occurs in cohomological degree 0 and the only non-zero differential is given by the
multiplication map $f \ot g \mapsto fg$. It is known and easily
checked that $F_s$ is invertible in $K^b(\BC)$, hence tensoring on the
left or right by $F_s$ gives an equivalence of
$K^b(\BC)$.

Fix $x \in W$ and a reduced expression $\underline{x} = s_1s_2 \cdots s_m$. As an object in the homotopy category $K^b(\BC)$, the object $F_{s_1} \cdots F_{s_m}$ depends only on $x$ up to
canonical isomorphism (see \cite{Ro}). In this paper, a \emph{Rouquier complex} is any choice $F_x \summand F_{s_1} \cdots F_{s_m}$ of minimal subcomplex, which
again does not depend on the choice of reduced expression.

\begin{remark}
  Braid group actions on derived categories appearing in highest
  weight representation theory have 
  been around for decades (see e.g. \cite{Ca,Ri}). One obtains the
  above complexes by translating these actions into Soergel
  bimodules. The term ``Rouquier complex'' seems to have been
  introduced by Khovanov. We feel it is justified in our setting, because it was
  Rouquier who first emphasised that concrete algebraic properties of
  these complexes should have applications for arbitrary Coxeter
  systems \cite[4.2.1]{Ro2}. This is a key idea in the present article.
\end{remark}

A straightforward induction shows that $F_x$ is homotopic to
$R(-\ell(x))$ when viewed as a complex of right $R$-modules. This
implies the following lemma:

\begin{lem} \label{lem:Rouqcohom}
  We have
\[
H^i(\overline{F_x}) = \begin{cases} \RM(-\ell(x)) & \text{if $i = 0$}
  \\ 0 & \text{otherwise}. \end{cases}
\]
\end{lem}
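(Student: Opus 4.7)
The plan is to deduce the lemma from the parenthetical remark just above it: once we know that $F_x$ is homotopy equivalent to $R(-\ell(x))$ (viewed as a one-term complex concentrated in cohomological degree zero) as a complex of right $R$-modules, the additive functor $-\otimes_R \RM$ preserves this homotopy equivalence, yielding $\overline{F_x} \simeq \RM(-\ell(x))$ as a complex of graded $\RM$-vector spaces concentrated in cohomological degree zero, from which the cohomology computation is immediate.

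To verify the right-$R$-module homotopy equivalence, first handle the base case $F_s$. As a graded right $R$-module, $B_s$ is free on the basis $\{c_\id, c_s\}$ and thus decomposes as $R(1) \oplus R(-1)$. Under this decomposition the multiplication differential $\mu : B_s \to R(1)$ sends $c_\id \mapsto 1$ and $c_s \mapsto \alpha_s$; in particular, restricted to the first summand it is the identity $R(1) \to R(1)$. Splitting off this contractible piece leaves $R(-1)$ in cohomological degree zero, giving $F_s \simeq R(-1)$ as complexes of right $R$-modules.

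For the inductive step, fix a reduced expression $\un{x} = s_1 \cdots s_m$ and set $G_k := F_{s_1} \otimes_R \cdots \otimes_R F_{s_k}$. The key point is that tensoring on the right by any bounded complex of $(R,R)$-bimodules preserves homotopy equivalences of complexes of right $R$-modules: any right-$R$-linear null-homotopy $h$ tensors to $h \otimes \id$, which remains a right-$R$-linear null-homotopy. Assuming inductively $G_k \simeq R(-k)$ as right-module complexes, we get
\[
G_{k+1} = G_k \otimes_R F_{s_{k+1}} \simeq R(-k) \otimes_R F_{s_{k+1}} = F_{s_{k+1}}(-k) \simeq R(-k-1),
\]
the last equivalence by the base case applied after a shift. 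Since the minimal subcomplex $F_x \summand G_m$ is isomorphic to $G_m$ in $K^b(\BC)$, and hence a fortiori in the homotopy category of right-$R$-module complexes, we conclude $F_x \simeq R(-\ell(x))$, as required.

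I do not foresee a serious obstacle. The only subtle point is to keep track of which category the various homotopies live in: throughout the induction the equivalences are known only in the homotopy category of right $R$-modules (not of bimodules), but this is all one needs in order to apply $-\otimes_R \RM$ at the end.
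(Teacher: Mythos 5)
Your proposal is correct and is exactly the ``straightforward induction'' that the paper invokes in the sentence immediately preceding the lemma: one shows $F_x \simeq R(-\ell(x))$ as a complex of right $R$-modules (base case $F_s \simeq R(-1)$ by splitting off the contractible summand $c_{\id}R \to R(1)$, inductive step by tensoring, which preserves right-$R$-linear homotopies) and then applies $-\otimes_R \RM$. The bookkeeping of which homotopy category each equivalence lives in is handled correctly.
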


For the rest of this section and the next we examine the perverse
filtration on Rouquier complexes.

\begin{lem} \label{lem:fs} Let $x \in W$, $s \in S$ and assume
  $S(x)$. Regard $B_x \in K^b(\BC)$ as a complex concentrated in degree $0$.
\begin{enumerate} \item If $xs < x$ then $B_xF_s \cong B_x(-1)$ in
  $K^b(\BC)$.
\item If $xs > x$ then $B_xF_s \in {}^pK^b(\BC)^{\ge 0}.$
\end{enumerate} \end{lem}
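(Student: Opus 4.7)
The plan is to analyse the two-term complex $B_xF_s = \bigl(B_xB_s \xrightarrow{d} B_x(1)\bigr)$, where $d = \id_{B_x}\otimes\mu$ and $\mu\colon B_s\to R(1)$ is the multiplication map, and treat the two cases $xs<x$ and $xs>x$ quite differently.

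For part (1) ($xs<x$), I would exhibit $B_xF_s$ explicitly as a direct sum of a contractible subcomplex and a copy of $B_x(-1)$ in cohomological degree $0$. First I would use $S(x)$ together with Soergel's categorification and the Hecke identity $\underline{H}_x\underline{H}_s=(v+v^{-1})\underline{H}_x$ to deduce $B_xB_s\cong B_x(1)\oplus B_x(-1)$ in $\BC$. Next, tensoring the short exact sequence $0\to R(-1)\to B_s\to R(1)\to 0$ on the left by $B_x$ (which is exact because $B_x$ is flat as a right $R$-module, being a summand of a Bott-Samelson bimodule) identifies $\ker d\cong B_x(-1)$ and shows $d$ is surjective. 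The restriction of $d$ to the $B_x(1)$-summand is a degree-zero endomorphism of $B_x$, hence a scalar by $\End(B_x)=\RM$ (from $S(x)$ and Soergel's hom formula). This scalar must be nonzero, for otherwise $d$ would factor through $B_x(-1)$, whose image is supported in degrees $\ge -\ell(x)+1$ and would miss the bottom degree $-\ell(x)-1$ of $B_x(1)$, contradicting surjectivity. A change of basis replacing the $B_x(-1)$-summand by $\ker d$ then splits off the contractible pair $B_x(1)\xrightarrow{\sim}B_x(1)$, leaving $B_x(-1)$ in cohomological degree $0$.

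For part (2) ($xs>x$), I would verify directly the two defining conditions of ${}^{p}K^b(\BC)^{\ge 0}$ for $B_xF_s$. Since $S(x)$ holds, $\ch(B_x)=\underline{H}_x$, so $B_x$ is perverse; consequently $B_x(1)$ (in cohomological degree $1$) is a shift-by-one of a perverse bimodule and lies in perverse degree $-1$, giving $\ptau_{<-1}(B_x(1))=0$. For the term $B_xB_s$ in cohomological degree $0$, I would argue it is perverse, using the multiplication formula
\[
\underline{H}_x\underline{H}_s \;=\; \underline{H}_{xs} + \sum_{z<x,\ zs<z}\mu(z,x)\,\underline{H}_z,
\]
together with non-negativity of the $\mu$-coefficients, to exhibit $\ch(B_xB_s)$ as a non-negative $\ZM$-combination of Kazhdan-Lusztig basis elements. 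Then the fact (established earlier in the paper) that summands of perverse Soergel bimodules are perverse gives that every indecomposable summand of $B_xB_s$ is perverse, so $B_xB_s$ is $p$-split with all perverse degrees equal to $0$, whence $\ptau_{<0}(B_xB_s)=0$.

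The main obstacle is the perverseness of $B_xB_s$ in part (2): it requires non-negativity of the $\mu(z,x)$, which does not follow from $S(x)$ alone. In the inductive context in which this lemma is invoked, however, $S(y)$ is known for all $y<xs$; writing $B_xB_s=\bigoplus B_y^{\oplus m_y}$ with $m_y\in\ZM_{\ge 0}[v^{\pm 1}]$ and comparing characters using $\ch(B_y)=\underline{H}_y$ for $y<xs$ identifies each $\mu(z,x)$ with a specialization of some $m_y$, forcing $\mu(z,x)\ge 0$. Part (1) is then essentially a bookkeeping exercise; the only genuinely delicate point there is the non-vanishing of the diagonal $B_x(1)\to B_x(1)$ component of $d$, which the lowest-degree argument above handles cleanly.
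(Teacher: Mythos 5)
Your argument is correct, but it diverges from the paper's in both parts, and the ``main obstacle'' you identify in part (2) is not actually an obstacle. For part (1), the paper gets the non-vanishing of the diagonal component $B_x(1) \to B_x(1)$ of the differential not from surjectivity but from indecomposability: since $-\otimes F_s$ is an equivalence of $K^b(\BC)$ and $B_x$ is indecomposable, $B_xF_s$ is indecomposable in $K^b(\BC)$, which forces the diagonal entry to be a nonzero scalar (and hence an isomorphism, so the contractible pair splits off). Your route --- $d = \id_{B_x}\otimes\mu$ is surjective by right-exactness, while a zero diagonal component would confine the image to degrees $\ge -\ell(x)+1$ and miss the one-dimensional bottom degree $-\ell(x)-1$ of $B_x(1)$ --- is equally valid and arguably more elementary, since it does not invoke invertibility of $F_s$. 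One small inaccuracy: the kernel of $\mu$ is the twisted bimodule $R_s(-1)$, not $R(-1)$, so $\ker d \cong B_x\otimes_R R_s(-1)$ rather than $B_x(-1)$ a priori; but you only use surjectivity of $d$, so this is not load-bearing.

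For part (2), your worry that non-negativity of the $\mu(z,x)$ ``does not follow from $S(x)$ alone'' is mistaken, and this is precisely the point the paper exploits. The character of any Soergel bimodule is a $\ZM_{\ge 0}[v^{\pm 1}]$-combination of standard basis elements (it counts ranks in a $\Delta$-filtration), so $S(x)$ forces $h_{z,x}\in v\ZM_{\ge 0}[v]$ for $z<x$ and hence $\mu(z,x)\ge 0$; thus $\ch(B_xB_s)=\un{H}_x\un{H}_s=\un{H}_{xs}+\sum\mu(z,x)\un{H}_z$ is already a non-negative combination of Kazhdan--Lusztig basis elements, and $B_xB_s$ is perverse, exactly as the paper asserts. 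Your fallback of importing $S(y)$ for all $y<xs$ is therefore unnecessary, and it would genuinely weaken the lemma: it is later applied (e.g.\ in the proof that $FF_s\in{}^pK^b(\BC)^{\ge 0}$) to summands for which only Soergel's conjecture for the summand itself is guaranteed by the stated hypotheses.
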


\begin{proof} (1) Under our assumptions $B_xB_s \cong B_x(1) \oplus B_x(-1)$. Hence $B_xF_s$ has the form \[ 0 \to B_x(1) \oplus B_x(-1) \to B_x(1) \to 0. \] Now $B_x$ is indecomposable
and tensoring with $F_s$ gives an equivalence of $K^b(\BC)$. Hence the above complex is also indecomposable. It follows that the map $B_x(1) \to B_x(1)$ induced by the differential is
non-zero, and is an isomorphism because $\End(B_x)=\RM$. It follows that the subcomplex $B_x(1) \to B_x(1)$ is contractible, yielding the result.

(2) If Soergel's conjecture holds for $B_x$ then
$\ch(B_xB_s) = \un{H}_x\un{H}_s \in \bigoplus \ZM_{\ge 0} \un{H}_z$
and $B_xB_s$ is perverse. The result is now immediate from the definitions.\end{proof}

\begin{lem} Suppose that $F \in {}^pK^b(\BC)^{\ge 0}$ and that
  Soergel's conjecture holds for all indecomposable summands of all ${}^iF$. Then $F F_s\in {}^pK^b(\BC)^{\ge
0}$. \end{lem}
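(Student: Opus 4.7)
The plan is to proceed by induction on the length of a minimal representative of $F$, reducing to the case where $F$ is concentrated in a single cohomological degree. For the inductive step, I would choose $j$ so that the stupid truncations $F_{<j}$ and $F_{\ge j}$ are both nonzero, giving a distinguished triangle $F_{<j} \to F \to F_{\ge j} \triright$ in $K^b(\BC)$. The hypotheses of the lemma pass to each stupid truncation, since each is built from the same $p$-split terms of $F$ with the same perversity bound and the same indecomposable summands (so Soergel's conjecture still holds for them). The inductive hypothesis then yields $F_{<j}F_s, F_{\ge j}F_s \in {}^pK^b(\BC)^{\ge 0}$, and Lemma \ref{lem:middle term} applied to the triangle $F_{<j}F_s \to FF_s \to F_{\ge j}F_s \triright$ completes the step.

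For the base case, $F = {}^jF$ is a $p$-split bimodule whose indecomposable summands are of the form $B_y(k)$ with $k \le j$ and $S(y)$ holding. Decomposing $F$ as a bimodule, $FF_s$ becomes a direct sum in $K^b(\BC)$ of complexes $(B_y F_s)(k)$, each supported in cohomological degrees $j$ and $j+1$. If $ys < y$, Lemma \ref{lem:fs}(1) gives $(B_y F_s)(k) \cong B_y(k-1)$ concentrated in degree $j$, and $k-1 \le j$. If $ys > y$, Lemma \ref{lem:fs}(2) gives $B_y F_s \in {}^pK^b(\BC)^{\ge 0}$; after the grading shift $(k)$ and repositioning to start at cohomological degree $j$, the summands in degree $j$ are perverse summands of $B_y B_s$ shifted by $(k)$ (so perverse degree $-k \ge -j$) and the summand in degree $j+1$ is $B_y(k+1)$ of perverse degree $-k-1 \ge -(j+1)$. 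Each direct summand of $FF_s$ therefore lies in ${}^pK^b(\BC)^{\ge 0}$, and hence so does $FF_s$.

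The mild obstacle is that the naive approach via the distinguished triangle $FB_s \to FF_s \to F(1)[-1] \triright$ coming from the two-step filtration of $F_s$ does not work directly: when $ys < y$ and $B_y(k)$ is a summand of ${}^jF$, the summand $B_y(k+1) \subset B_y B_s(k)$ violates the perverse degree bound in cohomological degree $j$, even though it is eventually cancelled (via the homotopy $B_y F_s \simeq B_y(-1)$) by a matching piece of $F(1)[-1]$. Handling each indecomposable summand $B_y(k)$ via Lemma \ref{lem:fs} performs this cancellation up front and bypasses the issue.
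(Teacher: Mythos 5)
Your proof is correct and follows essentially the same route as the paper: the paper also reduces to a single cohomological degree via the stupid filtration and Lemma \ref{lem:middle term}, then invokes Lemma \ref{lem:fs} for the resulting one-term complexes. Your base case merely spells out the shift bookkeeping that the paper leaves implicit, and your remark about why the naive triangle $FB_s \to FF_s \to F(1)[-1]$ fails is a fair (if optional) observation.
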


\begin{proof} We can assume that $F$ is a minimal complex.
  Consider the stupid filtration of $F$:
\[
w_{\ge k} F := \dots \to 0 \to  {}^kB \to {}^{k+1}B \to \dots 
\]
Then for all $k$ we have distinguished triangles
\[
w_{\ge k+1}F \to w_{\ge k} F \to {}^kB[-k] \triright
\]
By Lemma \ref{lem:middle term} if $(w_{\ge k+1}F)F_s$ and ${}^kB[-k]F_s$ are in
${}^pK^b(\BC)^{\ge 0}$ then so is $(w_{\ge k}B)F_s$. By Lemma
\ref{lem:fs}, ${}^kB[-k] F_s \in {}^pK^b(\BC)^{\ge 0}$. The result now
follows by induction.
\end{proof}

\begin{cor} \label{lem:p>0} Assume $S(y)$ for all $y<x$. Then $F_x \in {}^pK^b(\BC)^{\ge 0}.$ \end{cor}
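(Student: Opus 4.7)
The plan is to induct on $\ell(x)$. The base case $\ell(x) = 0$ is immediate: $F_{\id} = R = B_{\id}$ is a perverse bimodule concentrated in cohomological degree zero, so $\ptau_{<0}({}^0F_{\id}) = \ptau_{<0}(R) = 0$ and $F_{\id} \in {}^pK^b(\BC)^{\ge 0}$ trivially.

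For the inductive step, I would fix a reduced expression $\un{x} = s_1\cdots s_m$ and set $x' := s_1\cdots s_{m-1}$, $s := s_m$, so that $x = x's$ with $\ell(x') = \ell(x)-1$ and $x's > x'$. By definition, $F_x$ is a minimal subcomplex of $F_{s_1}\cdots F_{s_m} \cong F_{x'}F_s$, and these are canonically isomorphic in $K^b(\BC)$; since membership in ${}^pK^b(\BC)^{\ge 0}$ is invariant under isomorphism in $K^b(\BC)$, it suffices to show $F_{x'}F_s \in {}^pK^b(\BC)^{\ge 0}$. The inductive hypothesis is applicable, because $S(y)$ is assumed for every $y < x$, in particular for every $y \le x'$; so it yields $F_{x'} \in {}^pK^b(\BC)^{\ge 0}$. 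I would then invoke the preceding lemma with $F = F_{x'}$.

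To apply that lemma I must verify that Soergel's conjecture holds for every indecomposable summand of every term of $F_{x'}$ in a minimal representative. Each term of $F_{s_1}\cdots F_{s_{m-1}}$ is of the form $BS(\un{y})(k)$ for some subexpression $\un{y}$ of the reduced expression $\un{x'}$ and some grading shift $k$; passing to a minimal subcomplex only restricts each term to a direct summand. Every indecomposable summand of $BS(\un{y})$ is of the form $B_z(j)$ with $z \le y$ in Bruhat order (by the subword property), and hence $z \le y \le x' < x$, so $S(z)$ holds by assumption. The hypothesis of the lemma is thus satisfied, yielding $F_{x'}F_s \in {}^pK^b(\BC)^{\ge 0}$ and closing the induction.

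The only mildly delicate point, and the one I would expect to require care, is the bookkeeping around minimal subcomplexes: one must remember that a minimal subcomplex is a direct summand of the original complex at the level of complexes (see \S\ref{sec:minimal complexes}), so that indecomposable summands of terms of $F_{x'}$ are indeed summands of the Bott--Samelson terms of $F_{s_1}\cdots F_{s_{m-1}}$, and the Bruhat bound $z < x$ propagates. All of the genuine homological-algebra content is already packaged inside the preceding lemma (itself built on Lemma \ref{lem:middle term} and Lemma \ref{lem:fs}), so the present argument amounts to unwinding definitions.
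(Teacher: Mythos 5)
Your proposal is correct and follows the paper's own argument exactly: induct along a reduced expression, use the inductive hypothesis to get $F_{x'} \in {}^pK^b(\BC)^{\ge 0}$, and apply the preceding lemma to $F_{x'}F_s \cong F_x$. The extra care you take in checking that Soergel's conjecture holds for the indecomposable summands of the terms of $F_{x'}$ (all of which are $B_z(j)$ with $z \le x' < x$) is precisely the verification the paper leaves implicit in the phrase ``$F_y$ satisfies the conditions of the previous lemma.''
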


\begin{proof} Choose a reduced expression $\un{x}$ for $x$, ending in some $s \in S$. Let $y = xs < x$. By an inductive application of the previous lemma, $F_y \in {}^pK^b(\BC)^{\ge 0}$.
Then $F_x \cong F_y F_s$, and $F_y$ satisfies the conditions of the previous lemma, so $F_x \in {}^pK^b(\BC)^{\ge 0}$. \end{proof}

In particular, in the setting of the above corollary, we know
Soergel's conjecture for every summand of every ${}^iF_x$ except possibly $B_x$
itself, which occurs only in degree zero.

\subsection{Rouquier complexes are linear}

In the present section we establish that Rouquier complexes are ``linear'' under the assumption of Soergel's conjecture. We will need the following result of Libedinsky and the second
author:

\begin{prop} \label{prop:delta split} (Rouquier complexes are
  $\Delta$-split) Fix $x \in W$ and let $F_x$ denote a Rouquier complex. Then for any $y \in W$ we have an
isomorphism in the homotopy category of graded $R$-bimodules \[
\Gamma_{\ge y / > y} F_x = \begin{cases} \Delta_x & \text{if $x = y$,}
  \\ 0 & \text{otherwise.} \end{cases} \] \end{prop}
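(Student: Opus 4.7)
The plan is to proceed by induction on $\ell(x)$, with the base case $x = e$ immediate since $F_e = R = \Delta_e$ and $\Gamma_{\ge y/>y}(R)$ equals $\Delta_e$ for $y = e$ and $0$ otherwise. For the inductive step, I would fix a reduced expression $\un{x} = s_1 \cdots s_m$ and set $x' = s_1 \cdots s_{m-1}$, $s = s_m$, so $x = x's$, $\ell(x') < \ell(x)$, and $x's > x'$. Then $F_x \simeq F_{x'} F_s$ in $K^b(\BC)$, and the problem reduces to computing $\Gamma_{\ge y/>y}(F_{x'}F_s)$ using the inductive hypothesis applied to $F_{x'}$.

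The central computation is a key lemma: for any $z \in W$ with $zs > z$, one has $\Gamma_{\ge y/>y}(\Delta_z \otimes_R F_s) \simeq \Delta_{zs}$ in $K^b$ when $y = zs$, and $\simeq 0$ otherwise. To prove this, tensor the short exact sequence $0 \to \Delta_s \to B_s \xrightarrow{\mu} \Delta_e(1) \to 0$ with the right-flat bimodule $\Delta_z$ to obtain $0 \to \Delta_{zs} \to \Delta_z B_s \to \Delta_z(1) \to 0$; since $zs > z$, this sequence coincides with the support filtration of $\Delta_z B_s$. The differential of the complex $\Delta_z F_s = [\Delta_z B_s \to \Delta_z(1)]$ is precisely the quotient map $\id \otimes \mu$ of the SES. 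Computing termwise then yields: at $y = zs$, the stalk is $[\Delta_{zs} \to 0] \simeq \Delta_{zs}$; at $y = z$, the stalk is $[\Delta_z(1) \xrightarrow{\sim} \Delta_z(1)]$, contractible via the SES-induced isomorphism; and elsewhere the stalk vanishes since $\Delta_z F_s$ is supported on $\{z, zs\}$.

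For the inductive step proper, consider the termwise support filtration $\{\Gamma_{\ge z}(F_{x'})\}_z$ of $F_{x'}$. By the inductive hypothesis applied to each subquotient, $\Gamma_{\ge z/>z}(F_{x'}) \simeq \Delta_{x'}$ in $K^b$ for $z = x'$ and $\simeq 0$ otherwise. Because each term of $F_s$ is left-flat over $R$, tensoring the SES of the filtration with $F_s$ remains exact, producing a filtration of $F_x = F_{x'}F_s$ with subquotients $\Gamma_{\ge z/>z}(F_{x'}) \otimes F_s$; these are $\simeq \Delta_{x'} F_s$ at $z = x'$ and $\simeq 0$ otherwise. Applying $\Gamma_{\ge y/>y}$ (termwise, additive) and combining with the key lemma for $z = x'$, $zs = x$, all subquotients of the induced filtration of $\Gamma_{\ge y/>y}(F_x)$ are zero in $K^b$ except a single copy of $\Delta_x$ at $y = x$. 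A standard iteration of distinguished triangles then concludes $\Gamma_{\ge y/>y}(F_x) \simeq \Delta_x$ for $y = x$ and $\simeq 0$ otherwise.

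The main obstacle I expect is verifying that the functor $\Gamma_{\ge y/>y}$ — which is merely a subquotient of the identity and not exact on arbitrary short exact sequences of bimodules — nevertheless induces the correct subquotients on the filtration of $F_{x'}F_s$ after tensoring with $F_s$. This has to be checked by exploiting the fact that all bimodules in sight admit $\Delta$-filtrations compatible with support, so the only derived contributions would come from strata that lie outside the supports in question; these vanish for bookkeeping reasons in each degree. A secondary technical point is to track gradings and signs carefully through the key lemma, so that the identification $\Delta_{x'} \otimes F_s \rightsquigarrow \Delta_x$ at $y = x$ is canonical and produces exactly $\Delta_x$ (not a shift) in the homotopy category.
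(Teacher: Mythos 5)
The paper offers no proof of its own here: its ``proof'' is the single line citing [LW, Proposition 3.7]. Your argument is in substance a reconstruction of that Libedinsky--Williamson proof: induction along a reduced expression via $F_x \simeq F_{x'}F_s$, identification of the two-term complex $\Delta_{x'}F_s$ with the support filtration $0 \to \Delta_x \to \Delta_{x'}B_s \to \Delta_{x'}(1) \to 0$ (using $x's>x'$), and the observation that $\Gamma_{\ge y/>y}$ kills or contracts every graded piece except the single copy of $\Delta_x$ at $y=x$. The key lemma checks out, including the grading: $\Delta_{x'}\otimes_R \Delta_s = R_x(-\ell(x')-1)=\Delta_x$ since $\ell(x)=\ell(x')+1$, so no spurious shift appears. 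The first obstacle you flag -- exactness of $\Gamma_{\ge y}$ on the short exact sequences of the filtration after tensoring with $F_s$ -- is genuine but is handled by Soergel's $\Delta$-flag formalism: all terms in sight carry $\Delta$-flags whose characters add, and all are graded free of finite rank as right $R$-modules, so the snake-lemma obstruction vanishes by a graded rank count.

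The one step stated too casually is the concluding ``standard iteration of distinguished triangles.'' A termwise exact sequence of complexes of bimodules is generally not termwise split as bimodules (already $0\to\Delta_s\to B_s\to\Delta_e(1)\to 0$ does not split), so it does not yield a distinguished triangle in $K^b$ of bimodules, and you cannot directly conclude that a complex filtered by null-homotopic subquotients is null-homotopic. The repair is to pass through cohomology: the long exact sequences of the filtration show that $\Gamma_{\ge y/>y}(F_x)$ is exact for $y\ne x$ and has cohomology $\Delta_x$ concentrated in degree $0$ for $y=x$. Then note that $\Gamma_{\ge y/>y}(F_x)$ is a bounded complex whose terms are finite direct sums of shifts of $\Delta_y$; since $\Hom^\bullet(\Delta_y,\Delta_y)=R$, this additive category is equivalent to graded free right $R$-modules of finite rank, where a bounded exact complex is contractible and a bounded complex with free cohomology concentrated in degree $0$ is homotopy equivalent to that cohomology. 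With this adjustment the argument is complete and agrees with the cited source.
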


\begin{proof} This is \cite[Proposition 3.7]{LW}. \end{proof}

The precise statement of ``linearity'' is the following:

\begin{thm} (Rouquier complexes are linear) \label{thm:rouquier
    linear} Assume $S(y)$ for all $y \le x$ and let
  $F_x$ denote a Rouquier (minimal) complex. We have
  \begin{enumerate}
  \item ${}^0F_x = B_x$;
  \item For $i \ge 1$, ${}^iF_x= \bigoplus B_z(i)^{\oplus m_{z,i}}$ for $z < x$ and $m_{z,i} \in \ZM_{\ge 0}$.
  \end{enumerate}
In particular, $F_x \in {}^p K^b(\BC)^{\le 0} \cap {}^p K^b(\BC)^{\ge 0}$.
\end{thm}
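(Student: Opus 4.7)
The plan is to induct on $\ell(x)$. The base case $\ell(x)=0$ is immediate since $F_e = R = B_e$ sits in cohomological degree zero. For the inductive step I would write $x = ys$ with $ys > y$ and realize $F_x$ as a minimal subcomplex of $F_yF_s$, where $F_y$ satisfies the theorem by induction. One half of the statement — that summands $B_z(k)$ of ${}^iF_x$ satisfy $k \le i$ — is already provided by Corollary~\ref{lem:p>0}. The remaining work reduces to a direct analysis of $F_yF_s$ combined with a cancellation argument.

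Concretely, ${}^0(F_yF_s) = B_yB_s$ is perverse by $S(x)$, decomposing as $B_x \oplus \bigoplus_{z<x} B_z^{\oplus \mu^z_{y,s}}$. For $i \ge 1$, one has
\[
{}^i(F_yF_s) \;=\; \Bigl(\bigoplus_{z<y} B_z(i)^{\oplus m_{z,i}} \otimes_R B_s\Bigr) \oplus {}^{i-1}F_y(1).
\]
By induction ${}^{i-1}F_y(1)$ is perverse shifted by $i$. Each factor $B_z(i)\otimes_R B_s$ is also perverse shifted by $i$ when $zs > z$ (using $S(z)$), and equals $B_z(i+1) \oplus B_z(i-1)$ when $zs < z$. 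The only ``non-linear'' summands of $F_yF_s$ are therefore the pairs $B_z(i\pm 1)$ coming from this second case. The key observation is that each $B_z(i+1) \summand {}^i(F_yF_s)$ is matched with a $B_z(i+1) \summand {}^{i+1}(F_yF_s)$ sitting inside ${}^iF_y(1)$, and the corresponding matrix entry of $1\otimes d_{F_s}$ is the identity on $B_z(i+1)$. I would carry out these cancellations globally — essentially a family version of Lemma~\ref{lem:fs}(1) — leaving a minimal complex whose terms in degrees $i \ge 1$ consist only of perverse shifts by $i$, yielding $F_x \in {}^pK^b(\BC)^{\le 0}$ and giving the correct form of ${}^iF_x$ for $i \ge 1$.

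Finally, to pin down ${}^0F_x = B_x$ exactly (rather than only containing $B_x$ as a summand), I would apply Proposition~\ref{prop:delta split}: since every summand of $F_x$ has the form $B_z$ with $z \le x$ (because $F_x$ is a summand of a Bott-Samelson-style tensor of Rouquier factors), and $\Gamma_{\ge x/>x}B_z$ vanishes for $z < x$ while equalling $\Delta_x$ for $z = x$, the complex $\Gamma_{\ge x/>x}F_x$ reads off the multiplicities of $B_x$ summands in $F_x$. Its homotopy equivalence to $\Delta_x$ in degree zero then forces $B_x$ to appear exactly once, in cohomological degree $0$; the residual $B_z$ ($z<x$) summands of $B_yB_s$ must all cancel against matching shift-$0$ summands arising in ${}^1(F_yF_s)$ from the $zs<z$ factors of ${}^1F_y \otimes B_s$. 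The main obstacle is verifying that the differentials of $F_yF_s$ really have the block-isomorphism structure required for all these cancellations to be realized simultaneously; I would handle this by passing to $\BC/\rad(\BC)$, where minimality of $F_y$ makes the relevant matrix coefficients transparent and reduces the matching to a bookkeeping exercise.
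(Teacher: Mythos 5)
Your overall strategy (induct on $\ell(x)$, realize $F_x$ inside $F_yF_s$, and cancel contractible summands by hand) is genuinely different from the paper's, which works inside the single complex $F_x$ and proves a downward-induction statement (Lemma \ref{lem:delta induction}) by applying $\Gamma_{\ge z/>z}$ and Proposition \ref{prop:delta split} for \emph{every} $z<x$, not just $z=x$. But your argument has a real gap at its central step. After you cancel each $B_z(i+1) \summand B_z(i)B_s \summand {}^i(F_yF_s)$ (for $zs<z$) against the matching $B_z(i+1) \summand {}^iF_y(1) \summand {}^{i+1}(F_yF_s)$, what is left over in cohomological degree $i$ from that factor is the \emph{other} summand $B_z(i-1)$, of shift $i-1$. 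These survivors are exactly the summands violating $F_x \in {}^pK^b(\BC)^{\le 0}$, and they are not removed by your ``family version of Lemma \ref{lem:fs}(1)''. (You notice them for $i=1$, where they must cancel against the non-$B_x$ summands of $B_yB_s$, but the same problem occurs in every degree $i \ge 1$.) So the claim that the described cancellations already leave a complex whose degree-$i$ terms are perverse shifted by $i$ is false as stated; what they yield is only a witness for membership in ${}^pK^b(\BC)^{\ge 0}$, which you already have from Corollary \ref{lem:p>0}.

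Showing that these $B_z(i-1)$'s do cancel against shift-$(i-1)$ summands of ${}^{i-1}(F_yF_s)$ is the entire content of the theorem, and ``passing to $\BC/\rad(\BC)$ plus minimality of $F_y$'' does not deliver it. The relevant block of the differential is $d_{F_y}\otimes \id_{B_s}$ restricted to a summand $B_z(i-1)\summand B_w(i-1)B_s$ and projected to $B_z(i-1)\summand B_{z'}(i)B_s$; the components of $d_{F_y}$ all lie in the radical (they are of nonzero degree after unshifting), yet $f\otimes\id_{B_s}$ for such $f$ can perfectly well induce an isomorphism between isotypic summands of the two products --- whether it does so for \emph{enough} summands is precisely the nontrivial point, and minimality of $F_y$ says nothing about it. The paper's resolution is Lemma \ref{lem:delta induction}: for each $z<x$ the complex $\Gamma_{\ge z/>z}F_x$ is split acyclic by Proposition \ref{prop:delta split}, the perverse vanishing \eqref{eq:perverse vanishing} forbids a low-shift $\Delta_z(j)$ from cancelling forwards, so it must be hit from ${}^{i-1}F_x$ by a summand of strictly larger support and strictly smaller shift; iterating down to degree $0$ gives $j\ge i$, which together with Corollary \ref{lem:p>0} forces $j=i$. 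You would need to import this argument (or an equivalent) to close your induction.
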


\begin{remark}(Positivity of inverse Kazhdan-Lusztig polynomials.) One
  can show that 
\[ H_x = \ch(F_x) := \sum (-1)^i \ch({}^iF_x). \]
Therefore, defining $g_{z,x}$ by $H_x =
\sum g_{z,x} \un{H}_z$, we have $g_{x,x} = 1$ and $g_{z,x} = \sum (-1)^i m_{z,i} v^i$ for $z \le x$. Hence one can determine all multiplicities $m_{z,i}$ using only Kazhdan-Lusztig
combinatorics. Furthermore, a straightforward inductive argument gives
that $m_{z,i} = 0$ if $i$ and $\ell(x) - \ell(z)$ have different parity. Hence $(-1)^{\ell(x)-\ell(z)}g_{z,x}$ has positive
coefficients for all $z \le x$.
 \end{remark}

The theorem will be deduced from the following:

\begin{lem} \label{lem:delta induction} Assume $S(\le\!x)$. If ${}^iF_x$ contains a summand
  isomorphic to $B_z(j)$ with $z < x$ then ${}^{i-1}F_x$ contains a summand isomorphic to $B_{z'}(j')$ with $z'>z$ and
$j' < j$.\end{lem}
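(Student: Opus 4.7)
The strategy is to apply the support-quotient functor $\Gamma_{\ge z/>z}$ to $F_x$ and exploit Proposition~\ref{prop:delta split}: since $z<x$, the complex $C^\bullet := \Gamma_{\ge z/>z}F_x$ is zero in $K^b$, hence contractible as a complex of graded $R$-bimodules. Under $S(\le x)$, each summand $B_{z'}(j') \summand {}^iF_x$ contributes $\Delta_z^{\oplus v^{j'}h_{z,z'}}$ to $C^i$, where $h_{z,z'}$ is the coefficient of $H_z$ in $\un H_{z'}$; this vanishes unless $z'\ge z$, equals $\Delta_z(j')$ when $z'=z$, and is a nonnegative $\ZM$-linear combination of $\Delta_z(j'+a)$ ($a\ge 1$) when $z'>z$.

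Next I would base change to $\RM$ on the right. Any degree-zero bimodule map $\Delta_z(k_1)\to\Delta_z(k_2)$ is multiplication by an element of $R$ of degree $k_2-k_1$, which vanishes modulo $R^+$ unless $k_1=k_2$, so the complex $\overline{C^\bullet}:=C^\bullet\otimes_R\RM$ splits as $\bigoplus_k(\overline{C^\bullet})_k$ into exact complexes of finite-dimensional $\RM$-vector spaces, each concentrated in a single internal degree. For shift $k=j$, decompose each term as $V^i = V^i_{\mathrm{diag}}\oplus V^i_{\mathrm{off}}$, where $V^i_{\mathrm{diag}}$ collects the contributions from $B_z(j)$-summands of ${}^iF_x$ and $V^i_{\mathrm{off}}$ those from $B_{z'}(j')$-summands with $z'>z$ and $j'<j$. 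Minimality of $F_x$ together with $\End B_z=\RM$ from $S(z)$ forces the $(V^{i-1}_{\mathrm{diag}}, V^i_{\mathrm{diag}})$- and $(V^i_{\mathrm{diag}}, V^{i+1}_{\mathrm{diag}})$-blocks of the differentials to vanish. Exactness then yields a dichotomy: either $V^{i-1}_{\mathrm{off}}\ne 0$, which is the desired conclusion, or $\bar d^i$ is injective on $V^i_{\mathrm{diag}}$ and lands in $V^{i+1}_{\mathrm{off}}$, forcing some $B_{z_3}(j_3)\summand {}^{i+1}F_x$ with $z_3>z$ and $j_3<j$.

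The conclusion then follows by downward induction on $i$. At the top cohomological position $i_{\max}$, the vanishing ${}^{i_{\max}+1}F_x=0$ excludes the second alternative. For $i<i_{\max}$ in the second alternative, the inductive hypothesis at $(z_3, j_3, i+1)$ produces $B_{z_4}(j_4)\summand {}^iF_x$ with $z_4>z_3$ and $j_4<j_3$, and one iterates the dichotomy at $(z_4, j_4, i)$. An auxiliary analysis at $z=x$ -- applying the same base-change argument to $\Gamma_{\ge x/>x}F_x\cong \Delta_x$, where minimality kills the diagonal differentials and base change kills the cross-shift ones -- shows that $B_x$ occurs in $F_x$ only as $B_x$ itself in position $0$, so every $z_k$ produced by the iteration lies strictly below $x$. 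The resulting chain of summands of ${}^iF_x$ is strictly increasing in the finite Bruhat interval $[z, x)$, hence must terminate in the first alternative, yielding the required summand of ${}^{i-1}F_x$. The main difficulty is to orchestrate the minimality-induced block vanishing, the downward induction, and the auxiliary analysis at $z=x$ into a single coherent argument; each ingredient individually is elementary, but their interplay is what pins down the precise position $i-1$ together with the strict inequalities $z'>z$ and $j'<j$.
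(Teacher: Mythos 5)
Your reduction --- applying $\Gamma_{\ge z/>z}$, using contractibility from Proposition \ref{prop:delta split}, base-changing to $\RM$ and working in the single internal degree $\ell(z)-j$, with the diag/off decomposition and the vanishing of the diag-diag blocks by minimality --- is sound and is essentially the paper's mechanism. What you are missing is the Hom-vanishing \eqref{eq:perverse vanishing}: a component of the differential $B_z(j)\to B_{z'}(j')$ with $j'<j$ is a degree-zero map $B_z\to B_{z'}(j'-j)$ with $j'-j<0$ between perverse bimodules, hence is zero. Since $V^{i+1}_{\mathrm{off}}$ is assembled precisely from such summands, $\bar d^{i}$ annihilates $V^i_{\mathrm{diag}}$ outright; exactness then forces $V^i_{\mathrm{diag}}\subseteq \im \bar d^{i-1}$, and since the diag-diag block of $\bar d^{i-1}$ also vanishes, $V^{i-1}_{\mathrm{off}}\ne 0$. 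This kills your second alternative entirely and proves the lemma in one step, uniformly in $i$, with no induction; it is exactly what the paper does, phrased as cancellation of $\Delta_z(j)$-summands rather than via base change.

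Without it, your downward induction has a genuine gap at $i=0$, which is the case that matters most (it is what forces ${}^0F_x=B_x$ in Theorem \ref{thm:rouquier linear}). At $i=0$ the first alternative is impossible because ${}^{-1}F_x=0$, so every pass through your iteration lands in the second alternative, and the strictly increasing chain of summands of ${}^0F_x$ can only stop by reaching the summand $B_x(0)$ --- which your own auxiliary analysis shows does sit in position $0$. At that point the dichotomy cannot be iterated ($\Gamma_{\ge x/>x}F_x$ is not contractible), and you obtain neither the desired contradiction nor the conclusion; all the chain yields is $j>0$. Your assertion that every $z_k$ produced by the iteration lies strictly below $x$ is justified only for $i\ge 1$. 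The argument can be repaired, but only by importing extra input (most cleanly, \eqref{eq:perverse vanishing} as above); as written it does not close.
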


In the proof we use the following facts which are immediate from the
definition of the $\Delta$-character: if $S(y)$ holds then $\Gamma_{\ge
z/>z}(B_y)$ is zero unless $y \ge z$; it is $\Delta_z$ when $y=z$; and it is $\oplus \Delta_z(k)^{\oplus m_k}$ for $y>z$ with all $k$ strictly positive. 

\begin{proof}
Choose a summand $B_z(j) \summand {}^iF_x$, and consider its image in
${}^{i+1}F_x$ under the differential. By \eqref{eq:perverse vanishing}
this image must project trivially to any summand of the form $B_y(k)$
for $k<j$. If $S(z)$ and $S(y)$ hold
then Soergel's hom formula (Theorem \ref{thm:hom}) implies that any non-zero map $B_z(j) \to
B_y(j)$ is an isomorphism (and so $z = y$). Such an isomorphism cannot
appear as the projection of the differential in a minimal complex,
because it would yield a contractible summand. Therefore, $B_z(j)$
maps to 
$\ptau_{<-j} {}^{i+1}F_x$, the sum of terms $B_y(k)$ for $k>j$. Similarly, if some summand $B_y(k)$ of ${}^{i-1}F_x$ is sent non-trivially to $B_z(j)$ by the differential and then projection,
we must have $k<j$.

Now apply $\Gamma_{\ge z/>z}$ to $F_x$. The result is split by Proposition \ref{prop:delta split}, and has a summand in $\Gamma_{\ge z/>z}{}^iF_x$ isomorphic to $\Delta_z(j)$ coming from
our chosen summand $B_z(j)$. This summand cannot survive in the cohomology of the complex,
and thus must map isomorphically to some $\Delta_z(j)$ in $\Gamma_{\ge z/>z}({}^{i+1}F_x)$, or be mapped to isomorphically from some $\Delta_z(j)$ in $\Gamma_{\ge z/>z}({}^{i-1}F_x)$. The former
is impossible, because this summand maps to $\Gamma_{\ge z/>z}
\ptau_{<-j} ({}^{i+1}F_x)$ which can only contain $\Delta_z(k)$ for
$k>j$. Thus some summand $B_y(k)$ of ${}^{i-1}F_x$ contributes
$\Delta_z(j)$ to $\Gamma_{\ge z/>z}$ (in particular $y \ge z$), and
this maps to $B_z(j)$. As mentioned above we must have $k<j$, which
means that $y > z$. This proves the lemma. \end{proof}  

\begin{proof}[Proof of Theorem \ref{thm:rouquier linear}.] Lemma \ref{lem:delta induction} implies that the only summands of ${}^0F_x$ are of the form $B_x$, because
$^{-1}F_x=0$. In fact, $^0F_x \cong B_x$, as can be seen by applying
$\Gamma_{\ge x/>x}$. Induction using Lemma \ref{lem:delta induction}
then implies that $\ptau_{> -i}{}^iF_x = 0$. The theorem now follows
because $F \in {}^p K^b(\BC)^{\le 0}$ by Lemma \ref{lem:p>0}. \end{proof}

\subsection{Rouquier complexes are Hodge-Riemann} \label{sec:RHR}
We will use the
following proposition repeatedly in what follows:

\begin{prop} \label{prop:sumHR=>HR}
Fix $\z \ge 0$, $s \in S$ and a Soergel bimodule $B =
  \bigoplus_{z \in W} B_z^{\oplus m_z}$ (for $m_z \in
    \ZM_{\ge 0}$) such that if $m_z \ne 0$ then $S(z)$
  and $HR(z,s)_\z$ hold. If $\z = 0$ we assume in addition that $m_z
  = 0$ if $zs < z$.

Assume that $\overline{B}$ is even or odd and
  that $B$
  is equipped with an invariant non-degenerate form $\langle -, -
  \rangle_B$ such that $\overline{B}$ satisfies the Hodge-Riemann
  bilinear relations with the standard sign (with respect to left
  multiplication by $\rho$ and $\langle
  -, - \rangle_{\overline{B}}$).

Then $\overline{BB_s}$ satisfies the
  Hodge-Riemann bilinear relations with the standard sign (with
  respect to $L_\z$ and the induced form $\langle - , -
  \rangle_{\overline{BB_s}}$).
\end{prop}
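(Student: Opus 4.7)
The plan is to produce an orthogonal decomposition of $B$ into individual indecomposables $B_z$, on each of which the form is a specific signed multiple of a normalised intersection form; then to transport this decomposition to $BB_s$ via the $\RM$-linearity of the induced-form construction, and finally to apply $HR(z,s)_\z$ summand-by-summand.

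First I would use $S(z)$ and Soergel's hom formula (Theorem \ref{thm:hom}) together with self-duality of $B_z$ to deduce $\End(B_z)=\RM$ and $\Hom(B_z,\DM B_{z'})=\Hom(B_z,B_{z'})=0$ for $z\neq z'$. The isotypic decomposition $B=\bigoplus_z B_z^{\oplus m_z}$ is therefore automatically orthogonal for any invariant form. Fix for each $z$ with $m_z\neq 0$ a non-degenerate invariant form $\langle -,-\rangle_{B_z}$ on $B_z$ normalised so that $\langle \ov{c},\rho^{\ell(z)}\ov{c}\rangle_{\ov{B_z}}>0$ for any generator $c\in B_z^{-\ell(z)}$. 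By Lemma \ref{lem:unique form} the restriction of $\langle -,-\rangle_B$ to $B_z^{\oplus m_z}$ is then described by a symmetric matrix $M_z\in M_{m_z}(\RM)$ (acting on the multiplicity space), and non-degeneracy of $\langle -,-\rangle_B$ forces each $M_z$ to be non-degenerate.

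The heart of the proof is pinning down the signature of each $M_z$. Let $-\ell_{\max}$ denote the lowest non-zero degree of $\ov{B}$. Because $\ov{B}$ is even or odd, every $z$ with $m_z\ne 0$ satisfies $\ell(z)\equiv \ell_{\max}\pmod 2$, so $(\ell_{\max}-\ell(z))/2\in\ZM$. For degree reasons $(\ov{B_z^{\oplus m_z}})^{-\ell(z)}\subset P^{-\ell(z)}_\rho(\ov{B})$, and in the obvious basis the Lefschetz form on this subspace equals $N_z\cdot M_z$ with $N_z:=\langle\ov{c},\rho^{\ell(z)}\ov{c}\rangle_{\ov{B_z}}>0$. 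The assumed Hodge-Riemann bilinear relations with standard sign for $\ov{B}$ force the Lefschetz form on $P^{-\ell(z)}_\rho(\ov{B})$ to be $(-1)^{(\ell_{\max}-\ell(z))/2}$-definite, hence $M_z$ is too. Orthogonally diagonalising $M_z$ and rescaling basis vectors yields an orthogonal decomposition $B=\bigoplus_{z,i}V_i^{(z)}$ in which each $V_i^{(z)}\cong B_z$ is a sub-bimodule carrying the form $(-1)^{(\ell_{\max}-\ell(z))/2}\langle -,-\rangle_{B_z}$.

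Because the induced-form construction of \S\ref{sec:invariant forms} is $\RM$-linear in the form on $B$ and sends orthogonal summands to orthogonal summands (formulas \eqref{alphaalpha}--\eqref{betabeta}), this decomposition lifts to an orthogonal decomposition $BB_s=\bigoplus_{z,i}V_i^{(z)}B_s$ of $R$-bimodules, with form on each $V_i^{(z)}B_s$ equal to $(-1)^{(\ell_{\max}-\ell(z))/2}\langle -,-\rangle_{B_z B_s}$, where $\langle -,-\rangle_{B_z B_s}$ is the intersection form on $B_z B_s$ induced from $\langle -,-\rangle_{B_z}$. The Lefschetz operator $L_\z$ preserves this decomposition. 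By $HR(z,s)_\z$, the form on $P^{-i}_{L_\z}(\ov{B_z B_s})$ with respect to $\langle -,-\rangle_{B_z B_s}$ is $(-1)^{(\ell(z)+1-i)/2}$-definite; scaling by $(-1)^{(\ell_{\max}-\ell(z))/2}$ converts this into $(-1)^{(\ell_{\max}+1-i)/2}$-definiteness, which is precisely the standard-sign prescription for $\ov{BB_s}$ (whose lowest degree is $-\ell_{\max}-1$). Assembling these signs over the orthogonal decomposition gives the Hodge-Riemann bilinear relations with standard sign for $\ov{BB_s}$. The main obstacle is the parity-checking in the second step; once $M_z$ is pinned down, the remaining bookkeeping for $BB_s$ is a routine transport of structure.
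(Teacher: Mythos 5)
Your proposal is correct and follows essentially the same route as the paper's proof: orthogonalise the isotypic decomposition by diagonalising the (definite, by the Hodge--Riemann hypothesis on $\ov{B}$) Lefschetz form on the lowest-degree part of each isotypic block, read off the sign of the restricted form on each $B_z$ summand from the standard-sign convention, and then apply $HR(z,s)_\z$ summand by summand to the induced orthogonal decomposition of $BB_s$. The only cosmetic difference is that you make the multiplicity-space Gram matrix $M_z$ explicit where the paper phrases the identical step as choosing an orthogonal basis of $(B_z^{\oplus m_z})^{-\ell(z)}$ with respect to the Lefschetz form.
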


\begin{proof} We claim that we can choose our isomorphism $B \cong
  \bigoplus B_z^{\oplus m_z}$ such that each indecomposable summand is orthogonal
  under $\langle -, - \rangle_B$. Because Soergel's conjecture
  holds for each summand, the decomposition into isotypic components
  must be orthogonal, as $\Hom(B_z,\DM B_y)=0$ for $y \ne
  z$. Applying Soergel's conjecture again, we know that $\End(B_z^{\oplus
    m_z})$ is a matrix algebra, and choosing a decomposition of
  $B_z^{\oplus m_z}$ is the same as choosing a basis for $(B_z^{\oplus m_z})^{-\ell(z)}$. It is not difficult to check
  that if one chooses an orthogonal basis of
  $(B_z^{\oplus m_z})^{-\ell(z)}$ with respect to the (definite)
  Lefschetz form then one obtains an orthogonal decomposition of $B_z^{\oplus
    m_z}$.

Hence we may assume that the decomposition $B = \bigoplus B_z^{\oplus
  m_z}$ is orthogonal with respect to $\langle -, - \rangle$. It
follows that the induced form is orthogonal with respect to the
decomposition $BB_s = \bigoplus (B_zB_s)^{\oplus m_z}$. By the
Hodge-Riemann bilinear relations for $\overline{B}$, the
Lefschetz form on the primitive subspace in degree $m + 2i$ is
$(-1)^{i}$-definite, where $m$ denotes the minimal non-zero degree in
$\overline{B}$. Hence the restriction of $\langle -, - \rangle_{B}$ to
any summand isomorphic to $B_z$ is $(-1)^{(\ell(z) - m)/2}$ times a
positive multiple of the intersection form on $B_z$. It follows
from $HR(z,s)_{\z}$ that the Lefschetz form on the primitive subspace of
each summand $\overline{B_zB_s} \subset \overline{BB_s}$ is $(-1)^i$-definite in degree $-m-1 + 2i$. Hence the same is
true of $\overline{BB_s}$ (being the orthogonal direct sum of such
spaces). Hence $\overline{BB_s}$ satisfies the Hodge-Riemann bilinear
relations with the standard sign as claimed.
\end{proof}

For the rest of this section, fix $x \in W$ and assume $S(\le\!x)$. By Theorem \ref{thm:rouquier linear}
we know that ${}^jF_x$ is concentrated in perverse degree $-j$. 
By definition, $F_x$ is a direct summand of $F_{s_1} \cdots
F_{s_m}$ for any choice of reduced expression $\un{x} = s_1 \cdots
s_m$. Hence ${}^jF_x$ is a direct summand of ${}^j(F_{s_1} \cdots
F_{s_m})$. In other words, for all $j \ge 0$, 
\begin{equation} \label{eq:BSembedding}
 {}^jF_x \summand \bigoplus_{\un{x}' \in \pi(\un{x}, j)}
BS(\un{x}')(j)
\end{equation} where $\pi(\un{x}, j)$ denotes the set
of all subexpressions of $\un{x}$ obtained by 
omitting $j$ simple reflections. Shifting, we deduce that $
{}^jF_x(-j)$ is a summand of $\bigoplus BS(\un{x}')$.

Fix a tuple $\l = (\l_{\un{x}'})_{\un{x}' \in \pi(\un{x}, j)}$ of strictly
positive real numbers. We use these scalars to rescale the direct sum
of the intersection form on $\bigoplus BS(\un{x}')$: if $b =
(b_{\un{x}'})$ and $b' = (b_{\un{x}'})$ are elements of $\bigoplus
    BS(\un{x}')$ we set
\[
\langle b, b' \rangle^{\l} := \sum_{\un{x}' \in \pi(\un{x}, j)} \lambda_{\un{x}'} \langle b_{\un{x}'},
b'_{\un{x}'} \rangle_{BS(\un{x}')}.
\]

We say that $F_x$ \emph{satisfies the Hodge-Riemann bilinear
  relations} if for all reduced expressions $\un{x} = s_1 \dots s_m$
one can choose an embedding \[F_x \summand F_{s_1} \cdots
F_{s_m}\]such that, for all tuples of strictly positive real numbers $\l =
(\l_{\un{x}'})$, each $\overline{{}^jF_x(-j)}$ satisfies the
Hodge-Riemann bilinear 
relations with respect to the form induced by $\langle - , -
\rangle^\l$ and the Lefschetz operator given by left multiplication by
$\rho$, and with global sign determined as follows: the 
Lefschetz form should be positive definite on primitive subspaces in
degrees congruent to $-m + j$ modulo 4. (One can show that
this is equivalent to satisfying the Hodge-Riemann bilinear relations
with the standard sign. We will not need this.)

\begin{prop} \label{prop:signs} Assume $S(\le\!x)$. Also assume that $HR(y,s)$ holds for all $y
  < x$ and $s \in S$ with $ys > y$. Then $F_x$ satisfies the
  Hodge-Riemann bilinear relations.\end{prop}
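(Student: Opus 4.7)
Plan: I proceed by induction on $\ell(x)$. The base case $\ell(x) = 0$ is trivial: $F_e = R$ is concentrated in cohomological degree zero and already perverse. For the inductive step, fix a reduced expression $\un{x} = s_1 \cdots s_m$, let $s = s_m$ and $\un{y} = s_1 \cdots s_{m-1}$, so that $y = xs$ is a reduced expression with $ys > y$. Then $F_x \summand F_y F_s$ as minimal subcomplex, and composing with an embedding $F_y \summand F_{s_1} \cdots F_{s_{m-1}}$ furnished by the induction hypothesis yields an embedding $F_x \summand F_{s_1} \cdots F_{s_m}$. Any tuple of positive scalars $\l = (\l_{\un{x}'})_{\un{x}' \in \pi(\un{x}, j)}$ splits into two positive tuples indexed by $\pi(\un{y}, j)$ and $\pi(\un{y}, j{-}1)$ according to whether $\un{x}'$ ends in $s$; these are the tuples I feed into the induction hypothesis for $F_y$.

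For each $j \ge 0$, the perverse Soergel bimodule ${}^jF_x(-j)$ (perverse by Theorem \ref{thm:rouquier linear}) is a $\rho$-stable summand with symmetric Betti numbers of
\[
{}^j(F_y F_s)(-j) \;=\; \bigl({}^jF_y(-j)\bigr) B_s \;\oplus\; {}^{j-1}F_y(-(j-1)),
\]
and the rescaled intersection form splits orthogonally along this decomposition (subexpressions $\un{x}' = \un{y}''s$ contribute to the first summand via $BS(\un{y}'')B_s$; subexpressions $\un{x}' = \un{y}'$ contribute to the second). By Lemma \ref{lem: HR subspace}, it suffices to prove HR for the ambient sum, with primitive subspaces positive-definite in degrees $\equiv -m + j \pmod{4}$. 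The second summand inherits HR directly from the induction on $F_y$, since $-(m-1) + (j-1) \equiv -m + j \pmod{4}$. For the first summand, I would apply Proposition \ref{prop:sumHR=>HR} with $\z = 0$: the induction gives HR for $\ov{{}^jF_y(-j)}$, the hypothesis $S(\le x)$ provides Soergel's conjecture for its indecomposable summands, and $HR(z,s)$ is assumed for $z < x$ with $zs > z$. The sign is taken care of by the fact that Proposition \ref{prop:sumHR=>HR} preserves the standard sign under $(-) \cdot B_s$.

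The main obstacle is verifying the $\z = 0$ vanishing hypothesis $m_z = 0$ when $zs < z$ in Proposition \ref{prop:sumHR=>HR}. For $j = 0$ this is immediate, since ${}^0F_y = B_y$ and $ys > y$ by construction; but for $j \ge 1$ summands $B_z$ of ${}^jF_y$ with $zs < z$ may occur. To handle them, I would use the $\rho$-equivariant bimodule decomposition $B_z B_s \cong B_z(1) \oplus B_z(-1)$ from Lemma \ref{lem:fs}(1) and check that $HR(z)$ (available in the inductive framework from $HR(y,s)$ for $y < x$ with $ys > y$, via the embedding theorem) controls the Lefschetz form on each $\ov{B_z}(\pm 1)$ with the correct signs. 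Assembling these isotypic contributions, using $S(\le x)$ to split $\ov{{}^jF_y(-j)\cdot B_s}$ orthogonally into isotypic components via Soergel's conjecture and the uniqueness of invariant forms, extends Proposition \ref{prop:sumHR=>HR} to the setting where summands with $zs < z$ occur and completes the induction step via Lemma \ref{lem: HR subspace}.
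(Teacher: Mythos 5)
Your induction set-up (minimal subcomplex $F_x \summand F_yF_s$, the decomposition ${}^j(F_yF_s)(-j) = ({}^jF_y(-j))B_s \oplus {}^{j-1}F_y(-(j-1))$, and the orthogonal splitting of the rescaled forms) is the same as the paper's, and the treatment of the second summand and of the ``up'' part of the first is correct. The gap is the sentence ``By Lemma \ref{lem: HR subspace}, it suffices to prove HR for the ambient sum.'' That lemma requires the ambient space to satisfy the Hodge-Riemann relations, and here it does not: writing ${}^jF_y(-j) = B^{\uparrow}\oplus B^{\downarrow}$ according to whether $zs>z$ or $zs<z$ for the indecomposable summands $B_z$, the piece $B^{\downarrow}B_s\cong B^{\downarrow}(1)\oplus B^{\downarrow}(-1)$ fails even hard Lefschetz for $L_0=\rho\cdot(-)$, since $\rho$ acts diagonally and $\rho^{i}:(\ov{B_z})^{-i+1}\to(\ov{B_z})^{i+1}$ is surjective but not injective (this is precisely the excluded case ``$\z=0$, $zs<z$'' flagged before Theorem \ref{thm:hLI}). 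For the same reason your proposed repair --- extending Proposition \ref{prop:sumHR=>HR} to allow summands with $zs<z$ at $\z=0$ by controlling the form on each $\ov{B_z}(\pm 1)$ --- cannot work: the conclusion of that proposition is simply false for such summands, which is exactly why the hypothesis $m_z=0$ for $zs<z$ appears there. No amount of sign bookkeeping on the two shifted copies will restore hard Lefschetz on their sum.

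The paper's resolution is not to prove HR for the ambient sum but to show that the bad piece is invisible to the form on $\ov{{}^jF_x(-j)}$. Three inputs are needed. First, Lemma \ref{lem:shiftedvanishing} shows that the Lefschetz form vanishes identically on the summand $\ov{B^{\downarrow}}(1)\subset\ov{B^{\downarrow}B_s}$. Second, since ${}^jF_x(-j)$ is perverse (Theorem \ref{thm:rouquier linear}), the Hom vanishing \eqref{eq:perverse vanishing} forces the projection ${}^jF_x(-j)\to B^{\downarrow}B_s$ to land entirely in $B^{\downarrow}(1)$, so this component contributes nothing to the Lefschetz form; hence the projection $\iota$ onto $\ov{B^{\uparrow}B_s}\oplus\ov{{}^{j-1}F_y(-j+1)}$ is an isometry. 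Third, one must check that $\iota$ is a split injection, which the paper does with the semisimplification functor $q:\BC\to\BC^{ss}$: the component into $B^{\downarrow}(1)$ dies under $q$ because it connects different perverse degrees, while the full inclusion into ${}^j(F_yF_s)(-j)$ is split. Only then is Lemma \ref{lem: HR subspace} applied --- to this smaller codomain, which does satisfy HR (by induction for the ${}^{j-1}F_y$ factor, and by Proposition \ref{prop:sumHR=>HR} for $B^{\uparrow}B_s$, all of whose summands satisfy $zs>z$). Without these steps your induction does not close.
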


\begin{proof} We prove the proposition by induction over the Bruhat
  order, with the case $x = \id$ being obvious.  Fix a reduced
  expression $\un{x} = s_1s_2 \dots s_m$ for $x$  as above and let
  $\un{y} = s_1 \dots s_{m-1}$ and $s = s_m$ so that $x = 
  ys$. By induction we may
assume that $F_{y}$ satisfies the Hodge-Riemann bilinear
relations. Hence we may choose an embedding $F_y \summand F_{s_1}
\cdots F_{s_{m-1}}$ such that for all $j$ and any choice of scalars
$(\mu_{\un{y}'})_{\un{y}' \in \pi(\un{y}, j)}$ the form on
  $\overline{{}^j F_y(-j)}$ induced by the pullback of the form
  $\langle -, - \rangle^{\mu}$ under the embedding
\[
{}^j F_y(-j) \summand \bigoplus_{\un{y}' \in \pi(\un{y}, j)} BS(\un{y}')
\]
satisfies the Hodge-Riemann bilinear relations. Now $F_x$ is a summand
of $F_yF_s$ and hence we have natural embeddings
\begin{align*}
{}^jF_x(-j) &\summand {}^jF_yB_s(-j) \oplus \; {}^{j-1}F_y(-j + 1)
\summand \\ & \quad 
\summand \bigoplus_{ \un{y}' \in \pi(\un{y}, j)} BS(\un{y}')B_s \oplus
\bigoplus_{ \un{y}'' \in \pi(\un{y}, j-1)} BS(\un{y}'') =
\bigoplus_{ \un{x}' \in \pi(\un{x}, j)} BS(\un{x}').
\end{align*}
We claim that $\overline{{}^j F_x(-j)}$ satisfies the Hodge-Riemann
bilinear relations with respect to this embedding, for any tuple $\l =
(\l_{\un{x}'})_{\un{x}' \in \pi(\un{x}, j)}$  of strictly positive
real numbers (or equivalently any pair $(\mu'_{\un{y}'})_{\un{y}' \in
  \pi(\un{y}, j)}$ and $(\mu''_{\un{y}'})_{\un{y}'' \in \pi(\un{y}, j-1)}$
  of tuples of strictly positive real numbers).

Soergel's conjecture holds for all indecomposable summands of $F_{y}$
and hence we have a
canonical decomposition
\[ {}^jF_{y}(-j) = \bigoplus_{z \in W} V_z
\otimes_{\RM} B_z\]
for some (degree zero) multiplicity spaces $V_z$. Set \[
B^{\uparrow} := \bigoplus_{z \in W \atop zs > z} V_z \otimes_{\RM} B_z 
\quad \text{and} \quad B^{\downarrow} := \bigoplus_{z \in W \atop zs <
  z} V_z \otimes_{\RM} B_z \] so that
\begin{equation} \label{eq:Fydecomp} {}^jF_{y}(-j) = 
B^{\uparrow} \oplus B^{\downarrow}.\end{equation}
This decomposition is
orthogonal with respect to the induced forms because
$\Hom(B^{\uparrow},  \DM B^{\downarrow}) = \Hom(B^{\downarrow} ,\DM B^{\uparrow}) = 0$. Character calculations yield that $B^{\uparrow}B_s$ is perverse, and that $B^{\downarrow}B_s \cong B^{\downarrow}(-1) \oplus
B^{\downarrow}(1)$ (see also the proof of Theorem \ref{thm:hLI}).

Now, as we have already remarked above, $F_x$ is a summand of
$F_{y}F_s$ and so ${}^jF_x$ 
is a summand of $^jF_{y}B_s \oplus {}^{j-1} F_{y}(1)$. We
rewrite this using \eqref{eq:Fydecomp}:
\[
^jF_x(-j) \summand B^{\downarrow}B_s \oplus B^{\uparrow}B_s \oplus {}^{j-1}F_{y}(-j+1).
\]
This decomposition is orthogonal with respect to the induced forms 
and the inclusion of ${}^jF_x(-j)$  is an isometry.

The decomposition $B^{\downarrow}B_s \cong B^{\downarrow}(-1) \oplus B^{\downarrow}(1)$ is not orthogonal with respect to the induced form. In fact,
the induced form is non-degenerate, and hence induces a non-degenerate
pairing of $B^{\downarrow}(-1)$ and $B^{\downarrow}(1)$. Nonetheless,
we claim that in the decomposition
\[
\overline{B^{\downarrow}B_s} \cong  \overline{B^{\downarrow}}(1)
\oplus \overline{B^{\downarrow}}(-1)
\]
the restriction of the Lefschetz form to $\overline{B^{\downarrow}}(1)$
is zero.  Indeed, our assumptions imply that left multiplication by
$\rho$ satisfies the hard Lefschetz theorem on
$\overline{B^{\downarrow}}$, and the fact that the Lefschetz form
is zero follows from Lemma \ref{lem:shiftedvanishing}.

Because ${}^j F_x(-j)$ lives in perverse degree $0$ by Theorem \ref{thm:rouquier
linear}, Hom vanishing \eqref{eq:perverse vanishing} implies that the projection ${}^j F_x(-j) \to B^{\downarrow}B_s$ will land entirely within
$B^{\downarrow}(1)$, and therefore the image of $\overline{{}^j F_x(-j)}$
in $\overline{B^{\downarrow}B_s}$ will not contribute to the Lefschetz form.

Hence the projection to the second two factors above gives a map \[ \iota : \overline{{}^jF_x(-j)} \to\overline{B^{\uparrow}B_s} \oplus
\overline{{}^{j-1}F_{y}(-j+1)} \] which is an isometry for the
Lefschetz forms. We claim that $\iota$ is injective. Recall the functor
$q : \BC \to \BC^{ss}$ from \S\ref{sec:minimal complexes}. Any map ${}^j F_x(-j) \to
B^{\downarrow}(1)$ cannot be an isomorphism because it is a map
between objects in perverse degrees $0$ and $-1$ respectively, and
hence vanishes after applying $q$. On the other hand, if we apply $q$
to the original inclusion ${}^jF_x \hookrightarrow {}^j(F_yF_s)$ then
we obtain an injection, because this map is the inclusion of a direct
summand. We conclude that if we apply $q$ to $\iota' : {}^jF_x(-j)  \to
B^{\uparrow}B_s \oplus {}^{j-1}F_{y}(-j+1)$ then we obtain a split
inclusion. Hence $\iota'$ is a split inclusion and the claim follows.

By assumption the induced intersection form on $\overline{^{j-1}F_{y}(-j+1)}$ satisfies the Hodge-Riemann bilinear relations, and is positive
definite on primitive subspaces in degrees congruent to $-(m-1)+j-1= -m+j$ modulo 4. By the same inductive assumption, $\overline{B^\uparrow}$
satisfies the Hodge-Riemann bilinear relations with global sign given as follows: the Lefschetz form is $>0$ on primitives in degrees congruent to
$-(m-1)+j$ modulo 4. By Proposition \ref{prop:sumHR=>HR} (essentially applying $HR(z,s)$ to each summand) it follows that $\ov{B^\uparrow B_s}$
satisfies the Hodge-Riemann bilinear relations, with the Lefschetz forms $>0$ on primitive subspaces in degrees congruent to $-(m+1)+j-1 = -m+j$
modulo 4.

We conclude that the codomain of $\iota$ satisfies the Hodge-Riemann
bilinear relations. Hence the same is true for 
$\overline{{}^jF_x(-j)}$,  being a $\rho$-stable summand with symmetric
Betti numbers (Lemma \ref{lem: HR subspace}).
\end{proof}


\subsection{Factoring the Lefschetz operator} \label{sec:fact-lefsch-oper}

Fix an expression $\un{x} = s_1s_2 \cdots s_m$. Recall the
morphisms $\Br_i$, $\phi_i$ and $\chi_i$ introduced in
\S\ref{sec:BSbimodules}. Let us denote by $\Tr$ and $\langle -, -
\rangle$ the trace 
map and intersection form on $BS(\un{x})$. To avoid confusion, we
denote the trace map and intersection form on $BS(\un{x}_{\widehat{i}})$
by $\Tr_i$ and $\langle -, - \rangle_i$.

\begin{lem} \label{lem: breaking lefschetz II}
  For $b, b' \in BS(\un{x})$ we have $\langle b, \Br_i b' \rangle = \langle \phi_i b, \phi_i b' \rangle_i.$
\end{lem}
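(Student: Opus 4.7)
The plan is to use the commutative ring structure on $BS(\un{x})(-m)$ and $BS(\un{x}_{\widehat{i}})(-(m-1))$ to rewrite both sides. Recall that $\langle b, b' \rangle = \Tr(b \cdot b')$ where $\cdot$ denotes multiplication in the commutative ring. The map $\Br_i$ is precisely multiplication by the element $e_i := c_{\id} \otimes \cdots \otimes c_{s_i} \otimes \cdots \otimes c_{\id}$ (with $c_{s_i}$ in position $i$); indeed this matches the formula $b_1 \cdots (b_i c_{s_i}) \cdots b_m$ since $c_{\id}$ is the multiplicative identity in $B_{s_j}(-1)$ for each $j \neq i$. So by associativity and commutativity,
\[ \langle b, \Br_i b' \rangle = \Tr(b \cdot (b' \cdot e_i)) = \Tr(\Br_i(b \cdot b')) = \Tr(\chi_i \phi_i(b \cdot b')). \]

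Thus it remains to establish two compatibilities. First, I claim that $\phi_i$ is a homomorphism of the commutative rings, i.e.\ $\phi_i(c \cdot c') = \phi_i(c) \cdot \phi_i(c')$ for all $c, c' \in BS(\un{x})$. This reduces (by the very definition of the ring structure on the tensor product) to the statement that $\mu : B_{s_i}(-1) = R \otimes_{R^{s_i}} R \to R$ is a ring homomorphism, which is clear since $\mu(f\otimes g) = fg$ and so $\mu((f\otimes g)(f'\otimes g')) = \mu(ff'\otimes gg') = ff'gg' = \mu(f\otimes g)\mu(f'\otimes g')$. Second, I claim that $\Tr \circ \chi_i = \Tr_i$ as maps $BS(\un{x}_{\widehat{i}}) \to R$. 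This is immediate from the description of $\chi_i$ on the canonical basis: for any subexpression $\un{\e}'$ of $\un{x}_{\widehat{i}}$, $\chi_i(c_{\un{\e}'})$ is the basis element of $BS(\un{x})$ corresponding to the subexpression of $\un{x}$ with an $s_i$ inserted in position $i$ and the values of $\un{\e}'$ elsewhere. Hence $\chi_i(c_{\un{\e}'}) = c_\top$ if and only if $c_{\un{\e}'}$ is the top element of $BS(\un{x}_{\widehat{i}})$, which gives the claim on the basis and hence in general by right $R$-linearity.

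Combining these two observations we obtain
\[ \Tr(\chi_i \phi_i (b \cdot b')) = \Tr_i(\phi_i(b \cdot b')) = \Tr_i(\phi_i(b) \cdot \phi_i(b')) = \langle \phi_i b, \phi_i b' \rangle_i, \]
which is the desired equality. The only step requiring any real thought is checking the two compatibilities above; both are essentially tautological once one makes the commutative ring structure explicit, so no significant obstacle is expected.
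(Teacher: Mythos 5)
Your proof is correct and follows essentially the same route as the paper's: the paper carries out the identical computation on pure tensors, implicitly using the same three facts you isolate (that $\Br_i = \chi_i\circ\phi_i$ is multiplication by a degree-two element, that $\phi_i$ is multiplicative because $\mu$ is, and that $\Tr\circ\chi_i = \Tr_i$). Your version merely packages these steps explicitly, and in fact supplies the basis-by-basis verification of $\Tr\circ\chi_i=\Tr_i$ that the paper asserts without proof.
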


\begin{proof}
  We may assume $b = b_1b_2 \cdots b_m$ and $b' = b_1'b_2' \cdots b_m'$
  with $b_i, b_i' \in B_{s_i}$. We calculate
\begin{align*}
\langle b, \Br_i b' \rangle &= \Tr((b_1b_1') \cdots (b_ib_i'c_s) \cdots
(b_mb_m')) \\
& = \Tr((b_1b_1') \cdots \mu(b_i)\mu(b_i')c_s \cdots (b_mb_m')) \\
& = \Tr( \chi_i( (b_1b_1') \cdots \mu(b_i)\mu(b_i') \cdots (b_mb_m'))) \\
& = \Tr_i((b_1b_1') \cdots \mu(b_i)\mu(b_i') \cdots (b_mb_m')) \\
& = \langle \phi_i(b), \phi_i(b') \rangle_i.
\end{align*}
The second to last equality follows from the identity $\Tr(\chi_i(\g)) =
\Tr_i(\g)$ valid for all $\g \in BS(\un{x}_{\widehat{i}})$.
\end{proof}

Let us rescale the forms on each $BS(\un{x}_{\widehat{i}})$ by defining
\[
\langle -, - \rangle'_i := ( s_{i-1} \cdots s_1 \rho)(\a_{s_i}^\vee) \langle -, - \rangle_i.
\]
Let $\langle -, - \rangle'$ denote the direct sum of the forms
$\langle -, - \rangle'_i$ on $\bigoplus BS(\un{x}_{\widehat{i}})$.

If we set $\phi := \sum \phi_i$ then, for $b, b' \in BS(\un{x})$
we have
\begin{align*}
  \langle \phi(b), \phi(b') \rangle' & = \sum_{1 \le i \le m} (s_{i-1}
  \cdots s_1 \rho)( \a_{s_i}^\vee) \langle \phi_i(b), \phi_i(b') \rangle_i \\
& = \sum_{1 \le i \le m} ( s_{i-1} \cdots s_1 \rho )(\a_{s_i}^\vee) \langle b, \Br_i(b') \rangle_i \\
& = \langle b, \rho b' \rangle - \langle b, b' \rangle \cdot w^{-1}\rho
\end{align*}
by Lemmas \ref{lem: breaking lefschetz} and \ref{lem: breaking
  lefschetz II} respectively. We conclude:

\begin{lem} \label{lem:factored leftschetz}
  Consider the induced map
\[
\ov{BS(\un{x})} \stackrel{\phi}{\longto} \bigoplus \ov{BS(\un{x}_{\widehat{i}})}(1)
\]
For all $b, b' \in \ov{BS(\un{x})}$ we have
\[
\langle b, \rho b' \rangle = \langle \phi b, \phi b' \rangle' \in \RM.
\]
\end{lem}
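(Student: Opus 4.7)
The plan is to assemble Lemmas \ref{lem: breaking lefschetz} and \ref{lem: breaking lefschetz II} and exploit the fact that we are working modulo $R^+$. Lifting $b, b' \in \ov{BS(\un{x})}$ to representatives in $BS(\un{x})$, I would first apply Lemma \ref{lem: breaking lefschetz} to expand $\rho \cdot b'$ as
\[
\rho \cdot b' = \sum_{i=1}^m (s_{i-1} \cdots s_1 \rho)(\a_{s_i}^\vee)\, \chi_i \phi_i(b') + b' \cdot x^{-1}\rho,
\]
and then pair with $b$ using the invariance of the intersection form. The right-invariance property $\langle b, b' r\rangle = \langle b, b'\rangle \cdot r$ turns the last term into $\langle b, b'\rangle \cdot x^{-1}\rho \in R$.

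Next, since $\chi_i \circ \phi_i = \Br_i$ by \eqref{eq:invariance}, Lemma \ref{lem: breaking lefschetz II} rewrites each summand as
\[
(s_{i-1} \cdots s_1 \rho)(\a_{s_i}^\vee)\, \langle b, \Br_i b'\rangle = (s_{i-1} \cdots s_1 \rho)(\a_{s_i}^\vee)\, \langle \phi_i(b), \phi_i(b')\rangle_i,
\]
which is precisely $\langle \phi_i(b), \phi_i(b')\rangle'_i$ by the definition of the rescaled forms. Summing over $i$ recognises the total as $\langle \phi(b), \phi(b')\rangle'$, so that the identity
\[
\langle b, \rho b'\rangle = \langle \phi(b), \phi(b')\rangle' + \langle b, b'\rangle \cdot x^{-1}\rho
\]
holds in $R$.

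The last step is to pass to $\ov{BS(\un{x})}$. Since $\rho \in \hg^*$ has positive degree, so does $x^{-1}\rho$; in particular $x^{-1}\rho \in R^+$, so the correction term $\langle b, b'\rangle \cdot x^{-1}\rho$ has trivial image in $R/R^+ = \RM$. This yields the desired equality $\langle b, \rho b'\rangle = \langle \phi(b), \phi(b')\rangle'$ in $\RM$.

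There is no real obstacle here: the lemma is essentially a bookkeeping consequence of the two preceding lemmas, and the only conceptual point is recognising that the unwanted correction term lies in the kernel of $R \to \RM$. The factorisation itself is the content of Lemma \ref{lem: breaking lefschetz}; the rescaling by $(s_{i-1}\cdots s_1 \rho)(\a_{s_i}^\vee)$ was arranged in advance precisely to make this identity drop out cleanly.
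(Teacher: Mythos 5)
Your proposal is correct and follows the paper's own argument essentially verbatim: expand $\rho\cdot(-)$ via Lemma \ref{lem: breaking lefschetz}, convert each $\langle b,\Br_i b'\rangle$ term with Lemma \ref{lem: breaking lefschetz II}, and observe that the right-multiplication correction $\langle b,b'\rangle\cdot x^{-1}\rho$ lies in $R^+$ and hence vanishes in $\RM$.
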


\begin{remark}
  Lemma \ref{lem:factored leftschetz} will be a key tool in our proof
  of the hard Lefschetz theorem for Soergel bimodules. It serves as
  a partial replacement for the weak Lefschetz theorem.
\end{remark}

\begin{remark} When we apply the above lemma, $\un{x}$ will be a
  reduced expression. Because $\rho$ is assumed dominant regular it
  follows that all the scaling factors $(s_{i-1} \cdots s_1
  \rho)(\a_{s_i}^\vee)$ 
are positive, by \eqref{eq:pos}. Hence, although we rescale the forms on each
Bott-Samelson bimodule, this does not affect the signs appearing in the
Hodge-Riemann bilinear relations.
\end{remark}

\subsection{Proof of hard Lefschetz}
Fix $x \in W$ and $s \in S$. Let $\un{x}$
denote a reduced expression for $x$. Recall the operator $L_\z$ on
$B_xB_s$ from \S\ref{sec:HR}. The goal of this section is to prove
three incarnations of the hard Lefschetz theorem for the induced
action of $L_\z$ on $\overline{B_xB_s}$ under certain inductive
  assumptions. The three cases are:
  \begin{enumerate}
  \item $\z > 0$ and $xs < x$ (Theorem \ref{thm:hLI}),
  \item $\z > 0$ and $xs > x$  (Theorem \ref{thm:hLII}),
  \item $\z = 0$ and $xs > x$  (Theorem \ref{thm:hL}).
  \end{enumerate}
(It will also be clear in the proof of (1) that hard Lefschetz fails
in the missing case $\z = 0$ and $xs < x$.)

\begin{remark}
We warn the reader that
the proof in case (1) is comparatively straightforward, and has little in common
with the proofs of cases (2) and (3). On the other hand, the proofs of cases (2) and (3) (which use positivity considerations in
a crucial way) are
similar, with (3) being more involved. The reader is encouraged to view
the proof in case (2) as a warm-up for (3). 
\end{remark}

\begin{thm}[Hard Lefschetz for $\z > 0$, $xs < x$]
\label{thm:hLI} Suppose $\z > 0$ and $xs < x$. If $hL(x)$ holds,
then so does $hL(x,s)_\z$.
\end{thm}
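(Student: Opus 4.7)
Setting $V := \ov{B_x}$, the plan is to compute $L_\z$ explicitly in a convenient basis of $\ov{B_xB_s}$ and then deduce the result from $hL(x)$ by an elementary primitive-decomposition argument. Using the maps $\a(b) := bc_\id$ and $\b(b) := bc_s$ from \S\ref{sec:BSbimodules}, we have a right-$R$-module decomposition $B_xB_s = \a(B_x) \oplus \b(B_x)$, which after passing to $\ov{B_xB_s}$ gives an identification
\[
(\ov{B_xB_s})^{-i} \;=\; \ov{\a}(V^{-i+1}) \oplus \ov{\b}(V^{-i-1}) \;\cong\; V^{-i+1} \oplus V^{-i-1}.
\]

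First I would compute the matrix of $L_\z$ in this decomposition. By \eqref{eq:invariance} we have $\rho c_s = c_s\rho$, so the middle-$\rho$ contribution to $L_\z(\ov{\b(b)})$ becomes a right multiplication and vanishes in $\ov{B_xB_s}$, giving $L_\z(\ov{\b(b)}) = \ov{\b(\rho b)}$. Similarly \eqref{eq:demazure} yields $\rho c_\id = c_\id \cdot s\rho + \rho(\a_s^\vee)\,c_s$, and the right-multiplication piece again dies in $\ov{B_xB_s}$, leaving
\[
L_\z(\ov{\a(b)}) = \ov{\a(\rho b)} + \z k_s\, \ov{\b(b)}, \qquad k_s := \rho(\a_s^\vee) > 0.
\]
Writing $L_\z = L + \z M$ with $L$ left multiplication by $\rho$ and $M$ the operator $\ov{\a(b)} \mapsto k_s\,\ov{\b(b)}$, $\ov{\b(b)} \mapsto 0$, one checks $[L,M]=0$ and $M^2=0$ on $\ov{B_xB_s}$, so the binomial theorem gives
\[
L_\z^i(\ov{\a(v)} + \ov{\b(w)}) = \ov{\a(\rho^i v)} + \ov{\b(\rho^i w + i\z k_s\,\rho^{i-1} v)}
\]
for $(v,w) \in V^{-i+1} \oplus V^{-i-1}$.

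Since the Betti numbers of $V$ are symmetric, the source and target of $L_\z^i : (\ov{B_xB_s})^{-i} \to (\ov{B_xB_s})^i$ have equal dimension, so it suffices to prove injectivity. Suppose $(v,w)$ is in the kernel. The first equation $\rho^i v = 0$ places $v$ in the primitive subspace $P^{-i+1} := \ker(\rho^i|_{V^{-i+1}})$. The key observation is that by $hL(x)$ the primitive decomposition of $V$ yields
\[
V^{i-1} = \rho^{i-1} P^{-i+1} \;\oplus\; \bigoplus_{\substack{j\geq i+1 \\ j\equiv i+1\,(2)}} \rho^{(j+i-1)/2} P^{-j},
\]
and applying $\rho^i$ to the primitive decomposition of $V^{-i-1}$ identifies the second direct summand above precisely with $\rho^i(V^{-i-1})$. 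The second kernel equation $\rho^i w = -i\z k_s \rho^{i-1} v$ forces $\rho^{i-1} v \in \rho^i(V^{-i-1})$; combined with $\rho^{i-1} v \in \rho^{i-1} P^{-i+1}$ (from $v \in P^{-i+1}$), this forces $\rho^{i-1} v = 0$. By $hL(x)$ applied at degree $-(i-1)$ we conclude $v=0$, and then $\rho^i w = 0$ with $w \in V^{-i-1}$; since $\rho^{i+1}|_{V^{-i-1}}$ is an isomorphism by $hL(x)$, $\rho^i$ is injective on $V^{-i-1}$, so $w = 0$.

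There is essentially no main obstacle: the argument is a direct computation once one unpacks $L_\z$ in the $(\a,\b)$-basis. The role of $\z>0$ is essential, since at $\z=0$ the operator on the $\a$-part restricts to $\rho^i:V^{-i+1} \to V^{i+1}$, whose kernel is exactly $P^{-i+1}$; this is why $hL$ genuinely fails for $L_0$ on $\ov{B_xB_s}$ (as noted parenthetically before the theorem statement), and why the nilpotent correction $M$ with $\z > 0$ is needed.
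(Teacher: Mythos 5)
Your computation of $L_\z$ on the classes $\ov{\a(b)}$, $\ov{\b(b)}$, the binomial expansion $L_\z^i = L_0^i + i\z L_0^{i-1}M$, and the primitive-decomposition endgame are all fine (the endgame is essentially equivalent to the paper's argument, which completes $\rho$ to an $\sl_2$-action and identifies $L_\z$ with the Lefschetz operator on $\ov{B_x}\otimes(\text{standard rep})$). The gap is in the claimed identification $(\ov{B_xB_s})^{-i}\cong V^{-i+1}\oplus V^{-i-1}$ and the extraction of the two ``component equations''. The map $\a$ is only a morphism of \emph{left} modules: by \eqref{eq:alphar}, $\a(br)=\a(b)(sr)+\b(b)\partial_s(r)$, so for $r$ of degree $2$ the element $\ov{\a(br)}$ equals $\partial_s(r)\,\ov{\b(b)}$, which is generally nonzero. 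Consequently $\ov{\a}$ does not descend to a map $V\to\ov{B_xB_s}$, the images of $\a(B_x)$ and $\b(B_x)$ in $\ov{B_xB_s}$ are not complementary, and --- relative to any vector-space splitting obtained from a right-$R$-basis of $B_x$ --- the element $\ov{\a(\rho v)}$ has a nonzero $\ov{\b}$-component $N(\ov v)$ in addition to the ``diagonal'' part $\rho\ov v$. Already for $x=s$ one computes $\rho c_{\id}=c_{\id}(s\rho)+\rho(\a_s^\vee)c_s$ and $\partial_s(s\rho)=-\rho(\a_s^\vee)\ne 0$, so $N\ne 0$. Your second kernel equation therefore reads $\rho^i\ov w + i\z k_s\rho^{i-1}\ov v + N_i\ov v=0$ with $N_i=\sum_{a+b=i-1}\rho^a N\rho^b$, and since nothing constrains where $N_i\ov v$ sits relative to the primitive decomposition, the conclusion $\rho^{i-1}\ov v\in\rho^i(V^{-i-1})$ no longer follows.

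What is missing is a splitting of $B_xB_s\cong B_x(1)\oplus B_x(-1)$ compatible with \emph{both} module structures: one needs left-compatibility so that $L_0$ is block-diagonal, and right-compatibility so that the splitting survives $-\ot_R\RM$. This is exactly where the hypothesis $xs<x$ enters nontrivially in the paper's proof: by \cite[Theorem 1.4]{W} there is an $(R,R^s)$-bimodule $B_{\ov x}$ with $B_{\ov x}\ot_{R^s}R\cong B_x$, and splitting the middle copy of $R$ as $R^s\oplus R^s(-2)$ (via $\pi_1=\tfrac12(1+s)$ and $\pi_2=\partial_s$) yields a genuine bimodule decomposition $B_xB_s\cong B_x(1)\oplus B_x(-1)$. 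In that decomposition the off-diagonal contributions to $L_\z$ are right multiplications by positive-degree polynomials except for the single scalar entry $\z\rho(\a_s^\vee)$, so on $\ov{B_xB_s}$ one gets precisely the matrix you wanted, and your primitive-decomposition argument (or the $\sl_2$ argument) then closes the proof. Without that input the $c_{\id},c_s$-splitting does not do the job.
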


\begin{proof}
  The basic idea is as follows: because $xs < x$ we have $B_xB_s \cong
  B_x(1) \oplus B_x(-1)$. We will fix such an isomorphism and see that the operator 
  $L_\z$ on $\overline{B_xB_s} = \overline{B_x(1)} \oplus \overline{B_x(-1)}$ has the form
\begin{equation} \label{eq:Leasy} L_\z = 
\left (
\begin{matrix}
\rho \cdot (-)  & 0 \\
\z \rho(\a_s^\vee)  &  \rho \cdot (-) \end{matrix} \right )
\end{equation}
where $\rho \cdot (-)$ is the Lefschetz operator on $\overline{B_x}$ given by left
multiplication by $\rho$, and $\z \rho(\alpha_s^\vee)$ denotes a
scalar multiple of the identity, viewed as a degree two map
$\overline{B_x(1)} \to \overline{B_x(-1)}$. Because $\rho \cdot (- )$
satisfies the hard Lefschetz theorem on $\overline{B_x}$, we can
complete the $\rho$-action to an action of $\sl_2(\RM) = \RM f \oplus
\RM h \oplus \RM e$ such that $e = \rho \cdot ( - )$ and $h b = k b$
for all $b \in (\overline{B_x})^k$. In this case, after rescaling
(under the assumption that $\z \ne 0$) the
above matrix describes the action of $e$ on the tensor product of
$\overline{B_x}$ with the standard 2-dimensional representation of
$\sl_2(\RM)$. Hence $e = L_\z$ satisfies the hard Lefschetz theorem as
claimed.

It remains to show that $L_\z$ has the form given in
\eqref{eq:Leasy}. By assumption $xs < x$ and hence, by \cite [Theorem
1.4]{W}, we can find an $(R, R^s)$-bimodule $B_{\overline{x}}$ such
that $B_{\overline{x}} \otimes_{R^s} R \cong B_x$. We conclude that
any choice of isomorphism $R \cong R^s \oplus R^s(-2)$ of graded
$R^s$-modules yields an isomorphism
\begin{equation} \label{eq:singularsplit}
B_xB_s \cong B_{\overline{x}} \otimes_{R^s} R \otimes_{R^s} R(1) \cong
B_x(1) \oplus B_x(-1).
\end{equation}
Now we fix such an isomorphism. Consider the maps $\iota_1, \iota_2 :
R^s \to R$, where $\iota_1$ is the inclusion, and $\iota_2(r) =
\frac{1}{2}\alpha_s\iota_1(r)$. Let $\pi_1, \pi_2 : R \to R^s$ be
given by
\[
\pi_1(r) = \frac{1}{2}(r + sr) \quad \text{and} \quad \pi_2(r) = \partial_s(r).
\]
Then $\pi_a \circ \iota_b = \delta_{ab} $ for $a, b \in \{ 1, 2 \}$ and so these maps give
the inclusions and projections in an $R^s$-bimodule isomorphism $R \cong
R^s \oplus R^s(-2)$. Tensoring these isomorphisms with the identity on
both sides yields the inclusion and projection maps fixing an
isomorphism as in \eqref{eq:singularsplit}.

With respect to this fixed isomorphism a straightforward calculation yields
that $L_\z$ is given by the matrix
\[
\left (
\begin{matrix}
\rho \cdot (- ) + \z(-) \cdot \pi_1(\rho)  & \frac{1}{2} \z (-) \cdot 
\pi_1(\alpha\rho) \\
\z\rho(\a_s^\vee)(-) & \rho \cdot (- ) + \frac{1}{2} \z (-)
\cdot \partial_s(\a \rho)
\end{matrix}
\right ) .
\]
Passing to $\overline{B_xB_s}$ the operator of right multiplication by
a polynomial of positive degree becomes zero, and the above matrix
reduces to \eqref{eq:Leasy}. This completes the proof.
\end{proof}

\begin{thm}[Hard Lefschetz for $\z > 0$, $xs > x$]
\label{thm:hLII} Suppose $\z > 0$ and $xs > x$. Assume:
\begin{enumerate}
\item $S(\le\!x)$ holds;
\item $HR(z,t)$ holds for all $(z,t) \in W \times S$ such that $z < x$
  and $zt > t$;
\item $HR(<\!x,s)_\z$ holds;
\item $HR(x)$ holds.
\end{enumerate}
Then $hL(x,s)_\z$ holds.
\end{thm}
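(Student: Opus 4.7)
The plan is to prove $hL(x,s)_\z$ by applying Lemma \ref{lem: weak lefschetz substitute} with a map $\phi$ extracted from the degree-zero differential of the tensor Rouquier-type complex $F_xF_s$. Fix a reduced expression $\un{x}=s_1\cdots s_m$ and an embedding $B_x\subset BS(\un{x})$, so that $B_xB_s\subset BS(\un{x}s)$. Set
\[
\phi := (\phi_1,\ldots,\phi_m,\mu)\colon \ov{B_xB_s}\longrightarrow \bigoplus_{i=1}^m \ov{BS(\un{x}_{\widehat{i}}s)}(1) \oplus \ov{B_x}(1),
\]
where $\phi_i$ is the $i$th Bott--Samelson multiplication map and $\mu\colon B_xB_s\to B_x(1)$ multiplies out the $B_s$ factor. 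Up to signs, $\phi$ is the restriction to $B_xB_s$ of the degree-zero differential of $F_{\un{x}}F_s$, and its image lies in the direct summand $\ov{{}^1F_xB_s}\oplus\ov{B_x}(1) = \ov{{}^1(F_xF_s)}$.

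The key step is to factor $L_\z$ on $\ov{B_xB_s}$ through $\phi$. Applying Lemma \ref{lem: breaking lefschetz} to $BS(\un{x}s)$ (whose right-action term vanishes on $\ov{BS(\un{x}s)}$) writes the left-multiplication $\rho_L$ as $\sum_{i=1}^{m+1}\lambda_i\chi_i\phi_i$, with $\lambda_i=(s_{i-1}\cdots s_1\rho)(\alpha_{s_i}^\vee)$ and $\lambda_{m+1}=(x^{-1}\rho)(\alpha_s^\vee)>0$ (positive because $xs>x$). A direct calculation using \eqref{eq:demazure} and \eqref{eq:invariance} identifies $\chi_{m+1}\phi_{m+1}$ with $\mu^*\mu$ on $\ov{B_xB_s}$, and shows that the middle $\rho$-action $\rho_M$ (left multiplication by $\rho$ on the $B_s$ factor) acts as $\rho(\alpha_s^\vee)\mu^*\mu$. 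Hence
\[
L_\z \;=\; \rho_L + \z\rho_M \;=\; \sum_{i=1}^m \lambda_i\,\chi_i\phi_i \;+\; c\,\mu^*\mu
\]
on $\ov{B_xB_s}$, where $c:=(x^{-1}\rho+\z\rho)(\alpha_s^\vee)>0$. Equipping the target with the direct-sum form scaled by the positive constants $(\lambda_1,\ldots,\lambda_m,c)$ gives $\langle\phi\alpha,\phi\beta\rangle^\lambda=\langle\alpha,L_\z\beta\rangle$, so $\phi$ is an isometry for the Lefschetz form on $V=\ov{B_xB_s}$. $L$-equivariance of $\phi$ is routine, with $\mu$ intertwining $L_\z$ with $(1+\z)\rho$ on $\ov{B_x}$.

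Injectivity of $\phi$ in strictly negative degrees, the algebraic replacement for the weak Lefschetz theorem, follows from Lemma \ref{lem:Rouqcohom}: since $F_xF_s$ is homotopic to $R(-\ell(xs))$ as a complex of right $R$-modules, the kernel of the first differential of $\ov{F_xF_s}$ is one-dimensional and concentrated in grading degree $\ell(x)+1$. Consequently $\phi$ is injective on $(\ov{B_xB_s})^j$ for every $j\le \ell(x)$. The ``extra'' summand $\ov{B_x}(1)$, corresponding to the $i=m+1$ term in Lemma \ref{lem: breaking lefschetz}, is crucial: it accounts both for the $(x^{-1}\rho+\z\rho)_M$-part of $L_\z$ and for the would-be bottom-degree kernel of the pure Rouquier differential $d^0_{F_x}\otimes\id_{B_s}$.

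Finally, one must verify that the target $W:=\ov{{}^1F_xB_s}(-1)\oplus\ov{B_x}$ satisfies the Hodge--Riemann bilinear relations, with $L_\z$ on the first summand and $(1+\z)\rho$ on the second. Proposition \ref{prop:signs}, using hypotheses (i) $S(\le\!x)$ and (ii) $HR(z,t)$ for $z<x$ with $zt>z$, yields HR with standard sign on $\ov{{}^1F_x(-1)}$. Proposition \ref{prop:sumHR=>HR} (whose $\z=0$ restriction is vacuous here since $\z>0$) then combines this with hypothesis (iii) $HR(<\!x,s)_\z$ to give HR with standard sign on $\ov{{}^1F_x(-1)B_s}=\ov{{}^1F_xB_s}(-1)$ with respect to $L_\z$. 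Hypothesis (iv) $HR(x)$ gives HR on $\ov{B_x}$, hence also with respect to $(1+\z)\rho$. The \emph{sign-compatibility} of these two HR's on the direct sum $W$ is the main obstacle: both summands have standard sign individually, and one must check that the two signs agree on every common grading degree. This is verified either by a direct calculation—showing that the lowest nonzero degrees of the two summands coincide (both equal to $-\ell(x)$), using that $B_y(1)\subset{}^1F_x$ for $y=xs_m$—or more conceptually by adapting the sign-analysis in the proof of Proposition \ref{prop:signs} to the tensor Rouquier-type complex $F_xF_s$. Once HR for $W$ is in hand, Lemma \ref{lem: weak lefschetz substitute} yields injectivity of $L_\z^i$ on $(\ov{B_xB_s})^{-i}$ for all $i\ge 0$; self-duality of $B_xB_s$ and symmetry of its graded ranks upgrade this to bijectivity, establishing $hL(x,s)_\z$.
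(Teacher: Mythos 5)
You have reconstructed essentially the paper's argument: factor $L_\z$ through the first differential of $F_xF_s$ via Lemma \ref{lem: breaking lefschetz} (absorbing the $\z\rho(\a_s^\vee)$ term into the positive coefficient $(x^{-1}\rho)(\a_s^\vee)+\z\rho(\a_s^\vee)$ of the $B_x(1)$-component), obtain injectivity in degrees $\le\ell(x)$ from Lemma \ref{lem:Rouqcohom}, establish Hodge--Riemann for the target from Proposition \ref{prop:signs}, Proposition \ref{prop:sumHR=>HR} and $HR(x)$, and finish with Lemma \ref{lem: weak lefschetz substitute} and the symmetry of Betti numbers. Two corrections. First, the operator that $d_2=\id_{B_x}\ot\mu$ induces on $\ov{B_x}(1)$ is left multiplication by $\rho$, not by $(1+\z)\rho$: one computes $d_2(L_\z b)=\rho\cdot d_2(b)+d_2(b)\cdot\z\rho$, and the $\z$-term is a \emph{right} multiplication, hence vanishes in $\ov{B_x}$. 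This is harmless for the Hodge--Riemann input (the two operators are positive multiples of one another), but the $\RM[L]$-equivariance fed into Lemma \ref{lem: weak lefschetz substitute} must be stated with $\rho$ on that summand. Second, the sign compatibility you single out as the main obstacle is exactly what the mod-$4$ bookkeeping built into Proposition \ref{prop:signs} and Proposition \ref{prop:sumHR=>HR} delivers: $\ov{{}^1F_xB_s(-1)}$ comes out positive definite on primitives in degrees congruent to $-m$ modulo $4$, matching $\ov{B_x}$ with its standard sign, so your ``conceptual'' route is the one the paper takes and requires no further verification. Your ``lowest degree'' route also works (one can check $m_{xs_m,1}=1$, so $B_{xs_m}(1)$ does occur in ${}^1F_x$), but it needs that extra Kazhdan--Lusztig input, which the mod-$4$ argument avoids.
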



\begin{proof}
  Write $\un{x} = s_1s_2 \cdots s_m$ and set 
\begin{gather*}
\g_i := (s_{i-1} \cdots s_1 \rho)(\a_{s_i}^\vee) \quad
\text{for $1 \le i \le m$ and} \quad 
\g_{m+1} := ( x^{-1} \rho)(\a_s^\vee) + \z \rho (\a_s^\vee).
\end{gather*}
The scalars $\g_1, \dots, \g_m$ are all positive because $\un{x}$ is
reduced, and $\g_{m+1}$ is positive because $xs > x$, see
\eqref{eq:pos}.
As in \S\ref{sec:fact-lefsch-oper} we use the
  tuple $\gamma_{\le m} = (\gamma_i)_{i=1}^m$ (resp. 
  $\gamma = (\gamma_i)_{i=1}^{m+1}$) to
define a rescaled intersection forms $\langle -, - \rangle^{\g_{\le
    m}}$ (resp. $\langle -, - \rangle^\g$) on $\bigoplus
BS(\un{x}_{\hat{i}})$ (resp. $\bigoplus
BS((\un{x}s)_{\widehat{i}})$). 
By a slight variant of \S\ref{sec:fact-lefsch-oper} we have the relation
\begin{equation} \label{eq:phiL}
\langle b, L_\z b'
\rangle_{\ov{BS(\un{x}s)}} = \langle \phi(b), \phi(b') \rangle^\g_\RM \quad \text{for all $b, b' \in \ov{BS(\un{x}s)}$}
\end{equation}
where $\phi$ is the first differential in the complex $F_{s_1} \cdots
F_{s_m}F_s$ and $\langle -, - \rangle^\g_\RM$ denotes the form
on $\bigoplus \overline{BS((\un{x}s)_{\widehat{i}})}$ induced by
$\langle -, - \rangle^\g$.

Now fix a minimal complex $F_x \summand F_{s_1}F_{s_2} \cdots
F_{s_m}$. Because we assume $S(\le\!x)$, Theorem \ref{thm:rouquier
  linear} allows us to conclude that ${}^0F_x = B_x$ and that ${}^kF_x$ is concentrated
in perverse degree $-k$. Because we assume $S(\le\!x)$
  and $HR(z,t)$ for all
$z < x$ with $zt > z$ we may apply Proposition 
\ref{prop:signs} to find an embedding $F_x
\summand F_{s_1} \dots F_{s_m}$ so that ${}^1F_x(-1) \summand \bigoplus
\overline{BS(\un{x}_{\widehat{i}})}$ satisfies the Hodge-Riemann
bilinear relations with respect to the form induced by $\langle -, - \rangle^{\g_{\le
    m}}$.

The first two terms of $F_xF_s$ have the form
\[
B_xB_s \stackrel{\phi}{\longto} {}^1 F_xB_s \oplus B_x(1).
\]
We use this decomposition
to write $\phi = (d_1, d_2)$ for maps $d_1 : B_xB_s \to {}^1F_xB_s(-1)$
and $d_2 : B_xB_s \to B_x(1)$. It is straightforward to verify that 
$d_1$ commutes with $L_\z$, and that for $d_2$ we have
\[
d_2(L_{\z} b) = \rho \cdot d_2(b) + d_2(b) \cdot \z \rho
\]
 for all $b \in B_xB_s$. Hence, if we denote by $L$ the operator on
 $\overline{{}^1 F_xB_s} \oplus \overline{B_x}(1)$ given by $L_\z$ on the first summand and
 $\rho \cdot (- )$ on the second, then we have
 \begin{equation}
   \label{eq:dphicommute}
   \overline{\phi}(L_\z b) = L \overline{\phi}(b) \quad \text{for all $b \in
\overline{B_xB_s}$}
 \end{equation}
where $\overline{\phi}$ denotes the induced map
$\ov{\phi}: \overline{B_xB_s} \to \overline{{}^1 F_xB_s}
\oplus \overline{B_x}(1)$. Moreover:
  \begin{enumerate}
  \item $\overline{\phi}$ is injective in degrees $\le \ell(x)$ (by
    Lemma \ref{lem:Rouqcohom}).
  \item $\langle b, L_\z b' \rangle_{\overline{B_xB_s}} = \langle
    \overline{\phi}(b), \overline{\phi}(b') \rangle^\g_{\RM}$ for all
    $b, b' \in \overline{B_xB_s}$ (by \eqref{eq:phiL}).
\item $\overline{{}^1 F_xB_s(-1)}
\oplus \overline{B_x}$ satisfies the Hodge-Riemann bilinear relations
with respect to the Lefschetz operator $L$ and the form
$\langle -, - \rangle^\g_\RM$. (The decomposition $\overline{{}^1 F_xB_s(-1)}
\oplus \overline{B_x}$ is orthogonal. For $\overline{B_x}$ the
Hodge-Riemann bilinear relations hold by assumption. For $\overline{{}^1
  F_xB_s(-1)}$ the Hodge-Riemann relations hold by Proposition
\ref{prop:sumHR=>HR} and our assumption $HR(y,s)_\z$ for all $y < x$.)
  \end{enumerate}
Now we can apply   Lemma \ref{lem:
  weak lefschetz substitute} to conclude that $L_\z^k :
(\overline{B_xB_s})^{-k} \to (\overline{B_xB_s})^{k}$ is injective for
all $k \ge 0$. Finally, $\overline{B_xB_s}$ is self-dual as a graded
vector space, and hence has symmetric Betti numbers. Hence $L_\z$
satisfies the hard Lefschetz theorem on $\overline{B_xB_s}$ as
claimed.
\end{proof}

\begin{thm}[Hard Lefschetz for $\z = 0$, $xs > x$]
 \label{thm:hL}
Assume:
\begin{enumerate}
\item $S(\le \! x)$ holds;
\item $HR(y,t)$ holds for
  all $(y,t) \in W \times S$ such that $y < x$ and $yt >
  y$;
\item $HR(x)$ holds;
\item $hL(z)$ holds for all $z < xs$.
\end{enumerate}
 Then $hL(x,s)$ holds.
\end{thm}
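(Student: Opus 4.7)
The plan is to adapt the proof of Theorem \ref{thm:hLII} to the $\z = 0$ setting, where the new input is the observation flagged in the outline: for $zs < z$ the decomposition $B_zB_s \cong B_z(1) \oplus B_z(-1)$ of Theorem \ref{thm:hLI} is $L_0$-stable, and on one of the two summands the Lefschetz form vanishes by Lemma \ref{lem:shiftedvanishing}. This substitutes for the missing $HR(z,s)$ when $zs < z$.

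Using assumptions (1) and (2), I would first apply Proposition \ref{prop:signs} to fix a minimal Rouquier complex $F_x$ together with an embedding $F_x \summand F_{s_1}F_{s_2}\cdots F_{s_m}$ for which ${}^1F_x(-1)$ satisfies the Hodge-Riemann bilinear relations, and set $N := {}^1F_x(-1)$, decomposed as $N = N^{\uparrow} \oplus N^{\downarrow}$ according to whether the $z$-summands satisfy $zs > z$ or $zs < z$. As in Theorem \ref{thm:hLII}, the first differential of $F_xF_s$ provides a bimodule map
\[
\phi : B_xB_s \longrightarrow {}^1F_xB_s \oplus B_x(1)
\]
commuting with $L_0 = \rho \cdot (-)$, injective on $\ov{B_xB_s}$ in degrees $\le 0$ (by Lemma \ref{lem:Rouqcohom}), and factoring the Lefschetz form through an appropriately rescaled intersection form on the codomain (Lemma \ref{lem:factored leftschetz}).

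The codomain $\ov{{}^1F_xB_s \oplus B_x(1)}$ splits orthogonally as $\ov{(N^\uparrow B_s)(1)} \oplus \ov{(N^\downarrow B_s)(1)} \oplus \ov{B_x(1)}$. The first summand satisfies HR by applying Proposition \ref{prop:sumHR=>HR} with the instances of $HR(z,s)$ from assumption (2) to the $B_z \summand N^{\uparrow}$ (noting that $\ov{N^\uparrow}$ itself is HR as an orthogonal direct summand of $\ov{N}$ via Lemma \ref{lem: HR subspace}), and the third satisfies HR by assumption (3). For the middle summand, the $L_0$-stable bimodule isomorphism gives $\ov{(N^\downarrow B_s)(1)} \cong \ov{N^\downarrow(2)} \oplus \ov{N^\downarrow}$; hard Lefschetz on $\ov{N^\downarrow}$, which is available from assumption (4), combined with Lemma \ref{lem:shiftedvanishing} applied with $d = 2$, kills the Lefschetz form on $\ov{N^\downarrow(2)}$, while the self-pairing on the complementary $\ov{N^\downarrow}$ piece viewed inside $\ov{N^\downarrow B_s(1)}$ vanishes because the induced-form formula $\langle \b(b), \b(b') \rangle = \langle b, b' \rangle \a_s$ carries a factor of $\a_s \in R^+$. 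Consequently the only surviving contribution of $\ov{(N^\downarrow B_s)(1)}$ to the Lefschetz pairing is the off-diagonal cross-term, which is (up to identifications and a positive rescaling) simply the Lefschetz form on $\ov{N^\downarrow}$, and HR for the latter is once again supplied by Proposition \ref{prop:signs} and Lemma \ref{lem: HR subspace}.

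The main obstacle I anticipate is converting the vanishing and cross-pairing structure above into a genuine HR target to which Lemma \ref{lem: weak lefschetz substitute} applies. Concretely, one must exhibit a modified target --- obtained by replacing $\ov{(N^\downarrow B_s)(1)}$ with a copy of $\ov{N^\downarrow}$ carrying (a positive multiple of) the hyperbolic form realising the cross-pairing --- together with a map from $\ov{B_xB_s}$ into it that pulls back to the same Lefschetz form as $\ov{\phi}$ and for which the resulting codomain satisfies HR. I expect this step to require a careful perverse-degree analysis of the restriction of $\ov{\phi}$ to $\ov{(N^\downarrow B_s)(1)}$, analogous to the step at the end of Proposition \ref{prop:signs} where perversity was used to locate the image of a perverse bimodule inside an $L_0$-stable decomposition of the same shape. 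Once this is done, Lemma \ref{lem: weak lefschetz substitute} yields injectivity of $L_0^k$ on $(\ov{B_xB_s})^{-k}$ for all $k \ge 0$, and the symmetry of the Betti numbers of $\ov{B_xB_s}$ (coming from self-duality) promotes this injectivity to the desired $hL(x,s)$.
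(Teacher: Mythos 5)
Your setup is the right one --- the factorisation of $L_0$ through the first differential of $F_xF_s$, the embedding from Proposition \ref{prop:signs}, the orthogonal splitting of the target into $\ov{B_x(1)} \oplus \ov{B^{\uparrow}B_s(1)} \oplus \ov{B^{\downarrow}B_s(1)}$, and the diagnosis that the only problematic piece is $\ov{B^{\downarrow}B_s(1)}$, where Lemma \ref{lem:shiftedvanishing} and the $\a_s$-factor in $\langle \b(b),\b(b')\rangle$ leave only a cross-pairing. But the step you flag as ``the main obstacle'' is exactly the step that is missing, and the route you propose for it cannot work: a hyperbolic cross-pairing is never definite, so no ``modified target'' containing a copy of $\ov{B^{\downarrow}}$ with that form can satisfy the Hodge--Riemann bilinear relations, and Lemma \ref{lem: weak lefschetz substitute} requires definiteness on primitives of the \emph{entire} codomain. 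Nor can you simply project the $B^{\downarrow}$-direction away to obtain an HR target, because the resulting map from $\ov{B_xB_s}$ would no longer be injective in degrees $\le -1$. So a single application of Lemma \ref{lem: weak lefschetz substitute} is structurally impossible here.

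The paper resolves this with two moves that do not appear in your proposal. First, since $F_xF_s \in {}^pK^b(\BC)^{\ge 0}$ (Corollary \ref{lem:p>0}), the summand $\tau_{\le -2}(B^{\downarrow}B_s(1)) \cong B^{\downarrow}(2)$ is Gaussian-eliminated against the second differential, so after passing to a homotopy-equivalent summand the target of the first differential is $B_x(1)\oplus B^{\uparrow}B_s(1)\oplus B^{\downarrow}$, with a single \emph{unshifted} copy of $B^{\downarrow}$ and component maps $(d_1,d_2,d_3)$. Second, one argues by cases on $b \in (\ov{B_xB_s})^{-k}$: if $\ov{d_3}(b)\ne 0$, then since $d_3$ commutes with left multiplication by $\rho$ and $hL$ holds on $\ov{B^{\downarrow}}$ by your assumption (4), one gets $\ov{d_3}(\rho^k b) = \rho^k\ov{d_3}(b)\ne 0$ directly --- no form-theoretic input about $B^{\downarrow}$ is needed at all; if $\ov{d_3}(b)=0$, one restricts to $V=\ker(\ov{d_3})$ and applies Lemma \ref{lem: weak lefschetz substitute} to $V \to W(1)$ with $W=\ov{B_x}\oplus\ov{B^{\uparrow}B_s}$, which genuinely satisfies HR by assumptions (2), (3) and Proposition \ref{prop:sumHR=>HR}. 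This case split is the key idea your proposal lacks; without it assumption (4) has no role to play and the argument cannot close.
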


\begin{proof} Write $\un{x} = s_1s_2 \cdots s_m$ for $x$ and set 
\begin{gather*}
\g_i := ( s_{i-1} \cdots s_1 \rho)(\a_{s_i}^\vee)  \quad
\text{for $1 \le i \le m$ and} \quad 
\g_{m+1} := ( x^{-1} \rho)(\a_s^\vee).
\end{gather*}
By \eqref{eq:pos}, $\gamma_1, \dots, \gamma_{m+1}$ are positive.
 As in \S\ref{sec:fact-lefsch-oper} we use the
  tuple $\gamma_{\le m} = (\gamma_i)_{i=1}^m$ (resp. 
  $\gamma = (\gamma_i)_{i=1}^{m+1}$) to
define a rescaled intersection forms $\langle -, - \rangle^{\g_{\le
    m}}$ (resp. $\langle -, - \rangle^\g$) on $\bigoplus
BS(\un{x}_{\hat{i}})$ (resp. $\bigoplus
BS((\un{x}s)_{\widehat{i}})$). By
\S\ref{sec:fact-lefsch-oper} we have the relation
\begin{equation} \label{eq:phiL2}
\langle b, \rho \cdot b'
\rangle_{\ov{BS(\un{x}s)}} = \langle \phi(b), \phi(b') \rangle_\RM^\g \quad \text{for all $b, b' \in \ov{BS(\un{x}s)}$}
\end{equation}
where $\phi$ is the first differential in the complex $F_{s_1} \cdots
F_{s_m}F_s$ and $\langle -, - \rangle^\g_\RM$ denotes the form
on $\bigoplus \overline{BS((\un{x}s)_{\widehat{i}})}$ induced by
$\langle -, - \rangle^\g$.

We now choose a minimal subcomplex $F_x \summand F_{s_1}F_{s_2} \cdots
F_{s_m}$. We know that ${}^0F_x = B_x$, that ${}^kF_x$ is concentrated
in perverse degree $-k$ by Theorem \ref{thm:rouquier linear},
and that we can choose our embedding such that ${}^kF_x(-k) \summand \bigoplus BS(\un{x})$
satisfies the Hodge-Riemann bilinear relations by Proposition
\ref{prop:signs}. As in the proof of Proposition \ref{prop:signs} let
us decompose
\[
{}^1F_x(-1) = B^{\uparrow} \oplus B^{\downarrow}
\]
so that $B^{\uparrow}B_s$ is perverse and $\HC^0(B^{\downarrow}B_s)
= 0$. This decomposition is orthogonal with respect to
$\langle -, - \rangle^{\g_{\le m}}$ because
\[ \Hom(B^\uparrow, \DM B^\downarrow)
= \Hom(B^\downarrow, \DM B^\uparrow)
= 0.\]

The first two terms of $F_xF_s$ have the form
\[
B_xB_s \to B_x(1) \oplus B^{\uparrow}B_s(1)
\oplus B^{\downarrow}B_s(1).
\]
We claim that this decomposition of ${}^1(F_xF_s)$ is orthogonal with respect to
$\langle -, - \rangle^\g$. Indeed, under the inclusion of ${}^1(F_xF_s)
\summand \bigoplus BS((\underline{x}s)_{\widehat{i}})$ we have $B_x(1)
\summand BS(\un{x})$ and $B^{\uparrow}B_s(1)
\oplus B^{\downarrow}B_s(1) \summand \bigoplus BS(\un{x}_{\hat{i}}s)$. Hence $B_x(1)$ is orthogonal to $B^\uparrow
B_s(1) \oplus B^{\downarrow}B_s(1)$. The form on $B^{\uparrow}B_s(1)
\oplus B^{\downarrow}B_s(1) = (B^\uparrow \oplus B^\downarrow)B_s$
coincides with the induced form from $\langle -, -\rangle^{\gamma_{\le
    m}}$ on $B^\uparrow \oplus B^\downarrow$ (see \S\!\ref{sec:invariant
  forms}). The claimed
orthogonality for the decomposition of ${}^1(F_xF_s)$ now
follows from the orthogonality of $B^\uparrow$ and $B^\downarrow$
under $\langle -, -\rangle^{\gamma_{\le m}}$.

We also know that $F_xF_s \in {}^pK^b(\BC)^{\ge
  0}$ by Corollary \ref{lem:p>0}  and hence the restriction of the second differential to
$\tau_{\le -2}({}^1(F_xF_s)) = \tau_{\le -2}(B^{\downarrow} B_s(1))$ is
a split injection. Canceling this contractible direct summand we
obtain a summand of $F_xF_s$ such that the inclusion is a homotopy
equivalence. Observing that 
$\tau_{\ge -1}(B^\downarrow B_s(1)) = \tau_{\ge 0}(B^{\downarrow}B_s(1)) \cong
B^{\downarrow}$ we see that the first two terms of this summand have the
form
\[
B_xB_s \stackrel{d}{\longrightarrow} B_x(1) \oplus B^{\uparrow}B_s(1)
\oplus B^{\downarrow}.
\]
We use this decomposition to write $d =
(d_1, d_2, d_3)$ for maps $d_1 : B_xB_s \to B_x(1)$,
$d_2 :  B_xB_s \to B^{\uparrow}B_s(1)$ and  $d_3 : B_xB_s \to
B^{\downarrow}$. Consider the induced map
\[
\ov{B_xB_s}  \stackrel{\ov{d}}{\longrightarrow} \ov{B_x}(1) \oplus \ov{B^{\uparrow}B_s}(1)
\oplus \ov{B^{\downarrow}}
\]
with components $\ov{d_1}$, $\ov{d_2}$ and $\ov{d_3}$. By Lemma
\ref{lem:Rouqcohom}, $\overline{d}$ is
injective in degrees $\le \ell(x)$.

Now fix $0 \ne b \in (\ov{B_xB_s})^{-k}$ for some $k \ge 0$.
Because $\ov{B_xB_s}$ has symmetric Betti numbers, to prove the theorem
it is enough to show that $\rho^k \cdot b \ne 0$. Because $\ov{d}(b) \ne
0$, the theorem follows from the following two claims:

\emph{Claim 1:} If $\overline{d_3}(b) \ne 0$ then $\rho^k(b) \ne 0$.

Each indecomposable summand of $B^\downarrow$ is of the form $B_z$
with $z < xs$ and $zs < z$. For such $z$ left multiplication by
$\rho$ on $\overline{B_z}$ satisfies the hard Lefschetz theorem by
assumption. Hence left multiplication by $\rho$ satisfies the hard Lefschetz
theorem on $\overline{B^\downarrow}$. Now $\overline{d_3}$ commutes with left
multiplication by $\rho$. Hence $0 \ne \rho^k(\overline{d_3}(b)) =
\overline{d_3}(\rho^k(b))$ and the claim follows.

\emph{Claim 2:} If $\overline{d_3}(b) = 0$ then $\rho^k(b) \ne 0$.

Consider $V := \Ker(\overline{d_3}) \subset \ov{B_xB_s}$, and $W = \ov{B_x}
\oplus \ov{B^{\uparrow} B_s}$. By restricting $\langle -,-
\rangle_{\overline{B_xB_s}}$ to $V$ and $\langle -,- \rangle_\RM^\g$ to
$W$, we obtain graded forms on these spaces. The operator given by
left multiplication by $\rho$ is a Lefschetz operator on both
spaces. Write $\phi_V$ for the restriction of 
$\overline{d}$ to $V$, viewed as a map $V \to W(1)$. Then:
\begin{enumerate}
\item $\phi_V(\rho b) = \rho(\phi_V(b))$ for all $b \in V$;
\item $\phi_V$ is injective in degrees $\le -1$ (or even $\le \ell(x)$);
\item $\langle b, L_\z b' \rangle_V = \langle \phi_V(b), \phi_V(b')
  \rangle_W$ for all $b, b' \in V$ (by \eqref{eq:phiL});
\item $W$
satisfies the Hodge-Riemann bilinear relations. (For $\overline{B_x}$ this holds
by assumption. For $\ov{B^{\uparrow} B_s}$ this holds because every
indecomposable summand of $B^\uparrow$ is of the form $B_z$ with $zs >
z$. Hence the Hodge-Riemann bilinear relations hold for
$\ov{B^{\uparrow} B_s}$ by our assumption (2) in the 
statement of the theorem, combined with Proposition
\ref{prop:sumHR=>HR} and the fact that $\overline{B^\uparrow}$
satisfies the Hodge-Riemann bilinear relations.)
\end{enumerate}
We now apply Lemma \ref{lem: weak lefschetz substitute} to
conclude that $\rho^k : V^{-k} \to V^k$ is injective.
\end{proof}

\end{document}